\newcommand{\ilim} {\mathop{\rm lim\,inf}}
\newcommand{\Xx}{\mathtt{X}}
\newcommand{\Yy}{\mathtt{Y}}
\newcommand{\Aa}{\mathtt{A}}
\newcommand{\Bb}{\mathtt{B}}
\newcommand{\ff}{\mathtt{f}}
\newcommand{\vv}{\mathtt{v}}
\newcommand{\Phii}{\mathtt{\Phi}}
\newcommand{\Psii}{\mathtt{\Psi}}
\newcommand{\K}{\mathbb{K}}
\newcommand{\N}{\mathbb{N}}
\newcommand{\Y}{\mathbb{Y}}
\newcommand{\X}{\mathbb{X}}
\newcommand{\A}{\mathbb{A}}
\newcommand{\R}{\overline{\mathbb{R}}}
\newcommand{\F}{\mathbb{F}}
\newcommand{\B}{\mathcal{B}}
\newcommand{\bb}{\mathbb{B}}
\newcommand{\Gr}{{\rm Gr}}
\newcommand{\kk}{\mathcal{K}}
\begin{document}

\title{Continuity of Equilibria for Two-Person Zero-Sum Games with Noncompact Action Sets and Unbounded Payoffs\thanks{Research of the first author was partially supported by NSF grant CMMI-1636193.} 
}

\titlerunning{Continuity of Equilibria for Two-Person Zero-Sum Games \ldots}        

\author{Eugene~A.~Feinberg \and Pavlo~O.~Kasyanov        \and
        Michael~Z.~Zgurovsky
}


\institute{Eugene~A.~Feinberg \at
              Department of Applied Mathematics and
Statistics,\\
 Stony Brook University,\\
Stony Brook, NY 11794-3600, USA \\ \email{eugene.feinberg@sunysb.edu}
                   \and
           Pavlo~O.~Kasyanov \at
              Institute for Applied System Analysis,\\
National Technical University of Ukraine \\``Igor Sikorsky Kyiv Polytechnic
Institute'',\\ Peremogy ave., 37, build, 35,\\ 03056, Kyiv, Ukraine \\ \email{kasyanov@i.ua}
\and
Michael~Z.~Zgurovsky \at National Technical University of Ukraine \\``Igor Sikorsky Kyiv Polytechnic
Institute'',\\ Peremogy ave., 37, build, 1,\\ 03056, Kyiv, Ukraine \\ \email{zgurovsm@hotmail.com}
}

\date{Received: date / Accepted: date}

\maketitle

\begin{abstract}
This paper extends Berge's maximum theorem for possibly noncompact action sets and unbounded cost functions to minimax problems and studies applications of these extensions to two-player zero-sum games with possibly noncompact action sets and unbounded payoffs.  For games with perfect information, also known under the name of turn-based games,  this paper establishes continuity properties of value functions and solution multifunctions. For games with simultaneous moves,  it provides  results on the existence of lopsided values (the values in the asymmetric form) and solutions. This paper also establishes continuity properties of the lopsided values and solution multifunctions. 
\keywords{Two-person game \and Set-valued mapping \and Continuity of minimax}
\PACS{02.50.Le \and 02.30.Xx \and 02.30.Yy \and 02.30.Sa}
\subclass{91A05 \and 91A44 }
\end{abstract}

\section{Introduction}\label{s3}

Berge's maximum theorem provides sufficient conditions for the continuity of a value function and upper semi-continuity of a solution multifunction.  This theorem plays an important role in control theory, optimization, game theory, and mathematical economics.  The major limitation of the classic Berge's maximum theorem is the assumption that the sets of available controls at each state are compact.  Feinberg et al.~\cite{FKSVAN,FKV,Feinberg et al} generalized Berge's maximum theorem and related results to possibly noncompact sets of actions and introduced the notions of $\K$-inf-compact functions for metric spaces and $\K\N$-inf-compact functions for Hausdorff topological spaces.   These generalizations led to the developments of general optimality conditions for Markov decision processes in Feinberg et al.~\cite{Feinberg et al MDP}, partially observable Markov decision processes in Feinberg et al.~\cite{POMDP}, and inventory control in Feinberg~\cite{Ftut} and Feinberg and Lewis~\cite{FEINBERG2016}; see also Katehakis et al. \cite{Kat1} and Shi et al. \cite{Kat2} for studies of relevant inventory control problems.   The class of $\K$-inf-compact functions is broader than the class of inf-compact functions of two variables.
A function defined on a set of state-action pairs is called $\K$-inf-compact on this set, if this function is inf-compact, when the state variable is restricted to an arbitrary compact subset of the state space; see
Definition~\ref{def:kinf} for details.

This paper studies continuity properties of the value function and solution multifunctions, when a minimax problem is considered  for metric spaces instead of the optimization problem. 
The results are applied to one-step zero-sum games of two players with possibly noncompact action sets and unbounded payoffs.    Section~\ref{s2} presents results relevant to Berge's maximum theorem for noncompact action sets.  Section~\ref{sec:minimax} describes continuity properties of minimax.  In particular, Theorem~\ref{th: BergeMinimax} is the extension of Berge's maximum theorem for metric spaces with possibly noncompact action sets and unbounded costs to the minimax.  Section~\ref{s4} presents results on preserving  $\K$-inf-compactness of a function, when action or state sets are extended to the sets of probability measures on these sets.  Section~\ref{s5} deals with two-person zero-sum games  with possibly noncompact action sets and unbounded payoffs.  The definitions and preliminary facts for games are introduced in Subsection~\ref{sub:prem}.  In particular, the classes of safe and unsafe strategies are introduced, and the lopsided value  (the value in the asymmetric form) is defined.  
Of course, in the case of bounded payoffs,  all the strategies are safe. Theorem~\ref{th:exval} of Subsection~\ref{sub:exval} states the existence of the lopsided value.  Subsection~\ref{sub:sol} introduces sufficient conditions for the existence of solutions for the game.  These  conditions  imply that one of the players players has a compact action set.  This is consistent with the approach undertaken in  Ja\'shkewicz and Nowak~\cite{Jan}, where the most general available results were obtained for stochastic games with compact action sets and unbounded payoffs, and the optimality conditions for one of the players were provided; see also survey~\cite{JanS}.  Subsection~\ref{s5.4} describes continuity properties of the lopsided value,  classic value, and solution multifunctions for the game.  Section~\ref{sec:perf} clarifies that pure strategies are sufficient for games with perfect information, that is, the situation where the second player knows the move of the first player.  
Therefore, the results of Section~\ref{sec:minimax} describe the  properties of solutions for such games. 

The rest of this introduction contains definitions and propositions useful for the understanding of the future material.
Let $\R:=\mathbb{R}\cup\{\pm\infty\}$ and $\mathbb{S}$ be a metric space.
For a nonempty set $S\subset \mathbb{S},$  the notation $f:S\subset\mathbb{S}\mapsto {\R}$ means that for each $s\in S$ the value $f(s)\in{\R}$ is defined.
In general, the function $f$ may be also defined outside of $S.$   The notation $f:\mathbb{S}\mapsto {\R}$ means that the function $f$ is defined on the
entire space $\mathbb{S}.$
This notation is equivalent to the notation $f:\mathbb{S}\subset\mathbb{S}\mapsto {\R}, $ which we do not write explicitly.
For a function $f:S\subset\mathbb{S}\mapsto {\R}$ we sometimes consider its restriction $f\big|_{\tilde{S}}:\tilde{S}\subset\mathbb{S}\mapsto {\R}$ to the set $\tilde{S}\subset S.$
Throughout the paper we denote by $\K(\mathbb{S})$ the
\emph{family of all nonempty compact subsets} of ${\mathbb{S}}$
and by $S(\mathbb{S})$ the \emph{family of all nonempty
subsets} of~$\mathbb{S}.$

We recall that, for a
nonempty set $S\subset \mathbb{S},$ a function $f:S\subset \mathbb{S}\mapsto\R$ is
called \textit{lower semi-continuous at $s\in S$}, if for each sequence
$\{s^{(n)}\}_{n=1,2,\ldots}\subset S,$ that converges to $s$ in $\mathbb{S},$ the
inequality $\ilim_{n\to\infty} f(s^{(n)})\ge f(s)$ holds. A function
$f:S\subset \mathbb{S}\mapsto\R$ is called \textit{upper semi-continuous at
$s\in S$}, if $-f$ is lower semi-continuous at $s\in S.$
Consider the level sets
\[
\mathcal{D}_f(\lambda;S):=\{s\in S \, : \,  f(s)\le \lambda\},\qquad
\lambda\in\mathbb{R}.
\]
The level
sets $\mathcal{D}_f(\lambda;S)$ satisfy the following properties  used in this paper: 

(a) if $\lambda_1>\lambda,$ then $\mathcal{D}_f(\lambda;S)\subset
\mathcal{D}_f(\lambda_1;S);$

(b) if $g,f$ are functions on $S$ satisfying $g(s)\ge f(s)$ for all
$s\in S,$ then $\mathcal{D}_g(\lambda;S)\subset
\mathcal{D}_f(\lambda;S).$

A function
$f:S\subset \mathbb{S}\mapsto\R$ is called \textit{lower / upper semi-continuous}, if $f$ is lower / upper semi-continuous at each $s\in S.$
A function
$f:S\subset \mathbb{S}\mapsto\R$ is called \textit{inf-compact on
$S$}, if all the level sets $\{\mathcal{D}_f(\lambda;S)\}_{\lambda\in\mathbb{R}}$ are compact in $\mathbb{S}.$
A function
$f:S\subset \mathbb{S}\mapsto\R$ is called \textit{sup-compact on
$S$}, if $-f$ is inf-compact on~$S.$

Each nonempty subset $S$
of a metric space $\mathbb{S}$ can be considered as a metric space with the same metric. 
\begin{remark}\label{rem:comp}
{\rm
For each nonempty subset $S\subset\mathbb{S}$ the following equality holds:
\[
\K(S)=\{C\subset S\,:\, C\in \K(\mathbb{S})\}.
\]
}
\end{remark}

\begin{remark}\label{rem:tildelsc}
{\rm
It is well-known that a function $f:\mathbb{S}\mapsto\R$ is lower semi-continuous if and only if
the set $\mathcal{D}_f(\lambda;\mathbb{S})$ is closed for every $\lambda\in\mathbb{R};$ see e.g., Aubin \cite[p.~12, Proposition~1.4]{Aubin}.
For a function $f:S\subset \mathbb{S}\mapsto\R,$ let $\tilde f$ be the function $f:\mathbb{S}\mapsto\R,$ defined as $\tilde{f}(s):=f(s),$ when $s\in S,$ and $\tilde{f}(s):=+\infty$ otherwise. Then the function $\tilde{f}:\mathbb{S}\mapsto\R$ is lower semi-continuous if and only if for each $\lambda\in\mathbb{R}$ the set $\mathcal{D}_f(\lambda;S)$ is closed in $\mathbb{S}.$
}
\end{remark}

Let $\X$ and $\Y$ be metric spaces. For a set-valued mapping $\Phi:\X\mapsto 2^{\Y},$ let
\[
{\rm Dom\,}\Phi:=\{x\in\X\,:\, \Phi(x)\ne \emptyset\}.
\]
A set-valued mapping $\Phi:\X\mapsto 2^{\Y}$ is called \textit{strict} if ${\rm Dom\,}\Phi=\X,$ that is,  $\Phi:\X\mapsto S(\Y)$ or, equivalently,
$\Phi(x)\ne \emptyset$ for each $x\in \X.$
For $Z \subset \X$ define the \textit{graph} of a set-valued mapping
$\Phi:\X\mapsto 2^{\Y},$ restricted to~$Z$:
\[
{\rm Gr}_Z(\Phi)=\{(x,y)\in Z\times\Y\,:\, x\in{\rm Dom\,}\Phi,\, y\in \Phi(x)\}.
\]
When $Z=\X,$ we use the standard notation $\Gr(\Phi)$ for the graph of $\Phi:\X\mapsto 2^{\Y}$ instead of $\Gr_{\X}(\Phi).$

Throughout this section assume that  ${\rm Dom\,}\Phi\ne \emptyset.$ The following definition introduces the notion of a $\K$-inf-compact function defined on $\Gr (\Phi)$ for
$\Phi:\X\mapsto 2^{\Y},$ while in \cite{Feinberg et al} such functions are defined  for $\Phi:\X\mapsto S(\Y).$

\begin{definition}\label{def:kinf} {\rm (cf. Feinberg et al. \cite[Definition 1.1]{Feinberg et al})} A function $f:\Gr(\Phi)\subset\X\times \Y\mapsto \overline{\mathbb{R}}$ is called
$\K$-inf-compact on ${\rm Gr}(\Phi),$
if for every $C\in \K({\rm Dom\,}\Phi)$ this function is inf-compact on ${\rm
Gr}_C(\Phi).$
\end{definition}

\begin{remark}\label{rem:2:9}
{\rm
Each nonempty set $S\subset \X\times\Y$ corresponds the set-valued mapping $\Psi_S:\X\mapsto 2^\Y$ such that
$\Psi_S(x)=\{y\in \Y\,:\, (x,y)\in S\}$ for each $x\in\X.$ We note that ${\rm Dom\,}\Psi_S\ne \emptyset$ and $\Gr(\Psi_S)=S.$ Therefore, when we
write that the function $f:S\subset \X\times\Y\mapsto \R$ is $\K$-inf-compact on $S,$ we mean that $f$ is $\K$-inf-compact on  $\Gr(\Psi_S).$ 
}
\end{remark}

The function $f(x,y)=|x-y|$ is an example of a function $f:\mathbb{R}^2\mapsto\mathbb{R},$ which is $\K$-inf-compact on $\mathbb{R}^2,$ but it is not inf-compact on $\mathbb{R}^2.$  The following example describes another  $\K$-inf-compact function, which is not inf-compact. 

\begin{example}\label{exa:0}
{\rm Let $\X=\Y=\mathbb{R},$ $\Phi(x)=\mathbb{R}$ and $f(x,y)=x+y^2,$ $(x,y)\in \mathbb{R}^2.$ The function $f$ is $\K$-inf-compact on $\mathbb{R}^2$ because  the sets $\mathcal{D}_f(\lambda;C\times\mathbb{R})$ are compact for all $C\in \K(\mathbb{R})$ and $\lambda\in\mathbb{R}$. 
The function $f$ is not inf-compact on $\mathbb{R}^2$ since the level set $ \mathcal{D}_f(0;\mathbb{R}^2)=\{(-y^2,y)\,:\, y\in \mathbb{R}\}$ is not compact.
}  
\end{example}

\begin{definition}\label{def:ksup}
A function $f:\Gr(\Phi)\subset\X\times \Y\mapsto \overline{\mathbb{R}}$ is called
 $\K$-sup-compact on ${\rm Gr}(\Phi)$ if
 the function $-f$ is $\K$-inf-compact on ${\rm
Gr}(\Phi).$
\end{definition}

\begin{remark}\label{rem:Dom}
{\rm
According to Remark~\ref{rem:comp}, a  function $f:\Gr(\Phi)\subset\X\times \Y\mapsto \overline{\mathbb{R}}$ is
$\K$-inf-compact / $\K$-sup-compact on ${\rm Gr}(\Phi)$ if and only if $f:\Gr(\Phi)\subset{\rm Dom\,}\Phi\times \Y\mapsto \overline{\mathbb{R}}$ is
$\K$-inf-compact / $\K$-sup-compact on ${\rm Gr}(\Phi),$ where ${\rm Dom\,}\Phi$ is considered as a metric space with the same metric as on $\X.$
}
\end{remark}

The
topological meaning of $\K$-inf-compactness of a function on a graph of a strict set-valued mapping ${\Phi}:\X\mapsto S(\Y)$ is
explained in Feinberg et al.~\cite[Lemma 2.5]{Feinberg et al}; see also Feinberg et al. \cite[Lemma~2]{FKSVAN} and \cite[p.~1041]{FKV}.
\begin{lemma}{\rm(Feinberg et al.~\cite[Lemma 2.5]{Feinberg et al} and Feinberg and Kasyanov \cite[Lemma~2]{FKSVAN})}\label{k-inf-compact-strict}
Let ${\Phi}:\X\mapsto S(\Y)$ be a strict set-valued mapping. Then the function $f:\Gr(\Phi)\subset\X\times \Y\mapsto \overline{\mathbb{R}}$ is
$\K$-inf-compact on ${\rm Gr}(\Phi)$ if and only if the following two assumptions hold:
\begin{itemize}
\item[{\rm
(i)}] for each $\lambda\in\mathbb{R}$ the set $\mathcal{D}_f(\lambda;\Gr(\Phi))$ is closed in $\X\times\Y;$
\item[{\rm(ii)}] if a sequence $\{x^{(n)} \}_{n=1,2,\ldots}$ with values in $\X$
converges and its limit $x$ belongs to $\X,$ then each sequence $\{y^{(n)}
\}_{n=1,2,\ldots}$ with $y^{(n)}\in \Phi(x^{(n)}),$ $n=1,2,\ldots,$ satisfying
the condition that the sequence $\{f(x^{(n)},y^{(n)}) \}_{n=1,2,\ldots}$ is
bounded above, has a limit point $y\in \Phi(x).$
\end{itemize}
\end{lemma}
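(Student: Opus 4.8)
The plan is to prove the two implications of the equivalence separately, using throughout the fact that in a metric space compactness coincides with sequential compactness, so that every level set can be tested against convergent sequences. Since $\Phi$ is strict, ${\rm Dom\,}\Phi=\X,$ and hence by Definition~\ref{def:kinf} the $\K$-inf-compactness of $f$ on $\Gr(\Phi)$ means precisely that the set $\mathcal{D}_f(\lambda;\Gr_C(\Phi))$ is compact for every $C\in\K(\X)$ and every $\lambda\in\mathbb{R}.$ The single recurring device is this: given a convergent sequence $\{x^{(n)}\}$ in $\X$ with limit $x,$ the set $C:=\{x^{(n)}\,:\,n=1,2,\ldots\}\cup\{x\}$ is compact, which lets one pass freely between statements about arbitrary convergent sequences and statements about a fixed compact subset of $\X.$

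For the direction from $\K$-inf-compactness to (i) and (ii): to establish (i), I would fix $\lambda\in\mathbb{R}$ and take a sequence $\{(x^{(n)},y^{(n)})\}$ in $\mathcal{D}_f(\lambda;\Gr(\Phi))$ converging to some $(x,y)\in\X\times\Y.$ With $C$ as above, all the points $(x^{(n)},y^{(n)})$ lie in $\mathcal{D}_f(\lambda;\Gr_C(\Phi)),$ which is compact, hence closed; since $x\in C,$ the limit $(x,y)$ belongs to this set, so $y\in\Phi(x)$ and $f(x,y)\le\lambda.$ For (ii), given $x^{(n)}\to x$ and $y^{(n)}\in\Phi(x^{(n)})$ with $f(x^{(n)},y^{(n)})\le\lambda$ for some $\lambda,$ the pairs $(x^{(n)},y^{(n)})$ again lie in the compact set $\mathcal{D}_f(\lambda;\Gr_C(\Phi));$ a convergent subsequence then has a limit $(x',y')$ in this set with $x'=x$ (because $x^{(n)}\to x$), so $y'\in\Phi(x)$ is the required limit point of $\{y^{(n)}\}.$

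For the converse, I would fix $C\in\K(\X)$ and $\lambda\in\mathbb{R}$ and show that $\mathcal{D}_f(\lambda;\Gr_C(\Phi))$ is sequentially compact. Given a sequence $\{(x^{(n)},y^{(n)})\}$ in this set, compactness of $C$ yields a subsequence along which $x^{(n)}\to x\in C;$ since $f(x^{(n)},y^{(n)})\le\lambda$ is bounded above, assumption (ii) supplies a limit point $y\in\Phi(x)$ of $\{y^{(n)}\},$ that is, a further subsequence with $(x^{(n)},y^{(n)})\to(x,y).$ Assumption (i), the closedness of $\mathcal{D}_f(\lambda;\Gr(\Phi)),$ then forces $f(x,y)\le\lambda,$ and since $x\in C$ and $y\in\Phi(x)$ the limit lies in $\mathcal{D}_f(\lambda;\Gr_C(\Phi)).$ This exhibits a convergent subsequence with limit in the set, proving compactness and hence $\K$-inf-compactness of $f.$

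I do not anticipate a serious obstacle; the argument is elementary once the compact-set-from-a-convergent-sequence device is in place. The only points requiring slight care are the bookkeeping of nested subsequences in the converse direction (one passes first to a subsequence making $x^{(n)}$ converge, then to a further subsequence furnished by (ii)), and the observation that (i) is exactly the closedness needed to pass the inequality $f\le\lambda$ to the limit. If desired, Remark~\ref{rem:tildelsc} could be invoked to restate (i) as lower semi-continuity of the extended function, but this reformulation is not strictly necessary.
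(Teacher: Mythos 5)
Your proof is correct. Note, however, that the paper itself offers no proof of this lemma: it is imported verbatim from the cited references (Feinberg et al.\ \cite[Lemma 2.5]{Feinberg et al} and Feinberg and Kasyanov \cite[Lemma~2]{FKSVAN}), so there is no internal argument to compare against. Your write-up is essentially the standard argument used in those references: the equivalence of compactness and sequential compactness in metric spaces, combined with the device of packaging a convergent sequence $\{x^{(n)}\}$ together with its limit into the compact set $C=\{x^{(n)}:n\ge 1\}\cup\{x\}$, which translates statements about all compacta $C\in\K(\X)$ into statements about arbitrary convergent sequences and back. Both directions are handled soundly: in the forward direction you correctly use that a compact level set $\mathcal{D}_f(\lambda;\Gr_C(\Phi))$ is closed in $\X\times\Y$ to obtain (i), and extract convergent subsequences from it to obtain (ii); in the converse you correctly chain the two subsequence extractions (first from compactness of $C$, then from assumption (ii)) and use (i) only to pass the inequality $f\le\lambda$ to the limit, since membership $y\in\Phi(x)$ already comes from (ii). The only cosmetic remark is that in your proof of (i) the limit $(x,y)$ lies in $\mathcal{D}_f(\lambda;\Gr_C(\Phi))$ because that set is closed; the clause ``since $x\in C$'' is not what forces membership, though it does no harm. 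Your proof would serve as a legitimate self-contained replacement for the external citation.
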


The following lemma provides necessary and sufficient conditions for $\K$-inf-compactness
of a function $f:\Gr(\Phi)\subset\X\times \Y\mapsto \overline{\mathbb{R}}$ for a possibly non-strict set-valued mapping ${\Phi}:\X\mapsto 2^{\Y}.$
\begin{lemma}\label{k-inf-compact}
The function $f:\Gr(\Phi)\subset\X\times \Y\mapsto \overline{\mathbb{R}}$ is
$\K$-inf-compact on ${\rm Gr}(\Phi)$ if and only if the following two assumptions hold:
\begin{itemize}
\item[{\rm
(i)}] $f:\Gr(\Phi)\subset\X\times \Y\mapsto \overline{\mathbb{R}}$ is lower semi-continuous;
\item[{\rm(ii)}] if a sequence $\{x^{(n)} \}_{n=1,2,\ldots}$ with values in ${\rm Dom\,}\Phi$
converges in $\X$ and its limit $x$ belongs to ${\rm Dom\,}\Phi,$ then each sequence $\{y^{(n)}
\}_{n=1,2,\ldots}$ with $y^{(n)}\in \Phi(x^{(n)}),$ $n=1,2,\ldots,$ satisfying
the condition that the sequence\\ $\{f(x^{(n)},y^{(n)}) \}_{n=1,2,\ldots}$ is
bounded above, has a limit point $y\in \Phi(x).$
\end{itemize}
\end{lemma}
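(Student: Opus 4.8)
The plan is to reduce the statement to the already established strict case, Lemma~\ref{k-inf-compact-strict}, by taking ${\rm Dom\,}\Phi$ rather than $\X$ as the state space. First I would invoke Remark~\ref{rem:Dom}: the function $f$ is $\K$-inf-compact on $\Gr(\Phi)$ when $\X$ is the state space if and only if it is $\K$-inf-compact on $\Gr(\Phi)$ when ${\rm Dom\,}\Phi$, equipped with the metric inherited from $\X$, is taken as the state space. Since $\Phi(x)\ne\emptyset$ for every $x\in{\rm Dom\,}\Phi$, the restricted mapping $\Phi:{\rm Dom\,}\Phi\mapsto S(\Y)$ is strict, so Lemma~\ref{k-inf-compact-strict} applies with $\X$ replaced by ${\rm Dom\,}\Phi$. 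It yields that $f$ is $\K$-inf-compact on $\Gr(\Phi)$ if and only if: (a) for each $\lambda\in\mathbb{R}$ the set $\mathcal{D}_f(\lambda;\Gr(\Phi))$ is closed in ${\rm Dom\,}\Phi\times\Y$; and (b) the sequential limit-point condition holds for sequences $\{x^{(n)}\}$ taking values in ${\rm Dom\,}\Phi$.

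Condition (b) is literally assumption~(ii) of the lemma, so the proof reduces to showing that, in the presence of (ii), the closedness condition (a) is equivalent to the lower semi-continuity of $f$ on $\Gr(\Phi)$, which is assumption~(i). The implication (a)$\Rightarrow$(i) is immediate and does not use (ii): since $\Gr(\Phi)\subseteq{\rm Dom\,}\Phi\times\Y$, each level set $\mathcal{D}_f(\lambda;\Gr(\Phi))$, being closed in ${\rm Dom\,}\Phi\times\Y$ and already contained in $\Gr(\Phi)$, is closed in the relative topology of $\Gr(\Phi)$; by the standard characterization of lower semi-continuity through closedness of level sets (cf. Remark~\ref{rem:tildelsc}) this gives~(i).

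The converse, (i) together with (ii) $\Rightarrow$ (a), is where (ii) is needed and is the main point. I would take a sequence $\{(x^{(n)},y^{(n)})\}\subseteq\mathcal{D}_f(\lambda;\Gr(\Phi))$ converging to a point $(x,y)\in{\rm Dom\,}\Phi\times\Y$ and show $(x,y)\in\mathcal{D}_f(\lambda;\Gr(\Phi))$. Here $x^{(n)}\to x$ with $x^{(n)},x\in{\rm Dom\,}\Phi$, $y^{(n)}\in\Phi(x^{(n)})$, and $f(x^{(n)},y^{(n)})\le\lambda$, so the values $\{f(x^{(n)},y^{(n)})\}$ are bounded above; by~(ii) the sequence $\{y^{(n)}\}$ has a limit point in $\Phi(x)$, and since $y^{(n)}\to y$ this forces $y\in\Phi(x)$, hence $(x,y)\in\Gr(\Phi)$. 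Once the limit is known to lie in $\Gr(\Phi)$, lower semi-continuity~(i) gives $f(x,y)\le\ilim_{n\to\infty}f(x^{(n)},y^{(n)})\le\lambda$, so $(x,y)\in\mathcal{D}_f(\lambda;\Gr(\Phi))$, which is~(a).

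The main obstacle is precisely the gap between closedness of the level sets in the ambient set ${\rm Dom\,}\Phi\times\Y$, as demanded by Lemma~\ref{k-inf-compact-strict}, and mere lower semi-continuity of $f$, which is only relative closedness of the level sets within $\Gr(\Phi)$. Lower semi-continuity by itself cannot prevent a convergent sequence lying in a level set from having a limit whose $\Y$-coordinate escapes $\Phi(x)$, and assumption~(ii) is exactly the hypothesis that rules this out, thereby bridging the two notions.
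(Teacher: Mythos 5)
Your proposal is correct and follows essentially the same route as the paper's own proof: reduce to the strict case via Remark~\ref{rem:Dom} and Lemma~\ref{k-inf-compact-strict} applied with ${\rm Dom\,}\Phi$ as the state space, and then show that, given assumption~(ii), closedness of the level sets $\mathcal{D}_f(\lambda;\Gr(\Phi))$ in ${\rm Dom\,}\Phi\times\Y$ is equivalent to lower semi-continuity of $f$ on $\Gr(\Phi)$. Your key step --- using (ii) to force the limit $(x,y)$ of a sequence in a level set to lie in $\Gr(\Phi)$, and then invoking (i) to conclude $f(x,y)\le\lambda$ --- is exactly the argument in the paper.
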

\begin{proof}
According to Remark~\ref{rem:Dom}, the function $f:\Gr(\Phi)\subset\X\times \Y\mapsto \overline{\mathbb{R}}$ is
$\K$-inf-compact on ${\rm Gr}(\Phi)$ if and only if the function $f:\Gr(\Phi)\subset{\rm Dom}\Phi\times \Y\mapsto \overline{\mathbb{R}}$ is
$\K$-inf-compact on ${\rm Gr}(\Phi),$  where ${\rm Dom\,}\Phi$ is considered as a metric space with the same metric as on $\X.$ Therefore, Lemma~\ref{k-inf-compact-strict}, being applied to
$\X={\rm Dom\,}\Phi,$ $\Y=\Y,$ $f=f,$ and $\Phi=\Phi\big|_{{\rm Dom\,}\Phi},$ implies that the function $f:\Gr(\Phi)\subset\X\times \Y\mapsto \overline{\mathbb{R}}$ is
$\K$-inf-compact on ${\rm Gr}(\Phi)$ if and only if the following two assumptions hold:
\begin{itemize}
\item[{\rm
(a)}] for each $\lambda\in\mathbb{R}$ the set $\mathcal{D}_f(\lambda;\Gr(\Phi))$ is closed in ${\rm Dom\,}\Phi\times\Y;$
\item[{\rm(b)}] assumption (ii) of Lemma~\ref{k-inf-compact} holds.
\end{itemize}
The rest of the proof establishes that, under assumption (b), assumption~(a) holds if and only if  assumption (i) of Lemma~\ref{k-inf-compact} holds.

Let us prove that assumptions (a) and (b) imply assumption (i) of Lemma~\ref{k-inf-compact}. Consider a sequence $\{(x^{(n)},y^{(n)})\}_{n=1,2,\ldots}\subset \Gr(\Phi)$ that converges to $(x,y)\in \Gr(\Phi).$ Then either $\ilim_{n\to\infty}
f(x^{(n)},y^{(n)})=+\infty$ or there exists a subsequence $\{(x^{(n_k)},y^{(n_k)})\}_{k=1,2,\ldots}\subset
\{(x^{(n)},y^{(n)})\}_{n=1,2,\ldots}$ such that, for each real
$\lambda > \ilim_{n\to\infty} f(x^{(n)},y^{(n)}),$ the sequence $\{(x^{(n_k)},y^{(n_k)})\}_{k=1,2,\ldots}$ is eventually in $\mathcal{D}_f(\lambda;\Gr(\Phi)).$  Since the set $\mathcal{D}_f(\lambda;\Gr(\Phi))$ is closed in ${\rm Dom\,}\Phi\times\Y,$ we have $(x,y)\in \mathcal{D}_f(\lambda;\Gr(\Phi))$ for each real $\lambda > \ilim_{n\to\infty} f(x^{(n)},y^{(n)})$ and, therefore, \[f(x,y)\le \ilim_{n\to\infty} f(x^{(n)},y^{(n)}),\] that is, assumption (i) of Lemma~\ref{k-inf-compact} holds. 

Let assumption (b) and assumption (i) of Lemma~\ref{k-inf-compact} hold. Then (a) holds. Indeed, we fix an arbitrary $\lambda \in\mathbb{R}$ and prove that the level set $\mathcal{D}_f(\lambda;\Gr(\Phi))$ is closed in ${\rm Dom\,}\Phi\times\Y.$ 
Let $\{(x^{(n)},y^{(n)})\}_{n=1,2,\ldots}\subset \mathcal{D}_f(\lambda;\Gr(\Phi))$ be a sequence that converges and its limit $(x,y)$ belongs to ${\rm Dom\,}\Phi\times\Y.$
Assumption~(b) implies that $(x,y)\in\Gr(\Phi).$ Moreover, since $f:\Gr(\Phi)\subset\X\times \Y\mapsto \overline{\mathbb{R}}$ is lower semi-continuous, this function is lower semi-continuous at $(x,y)\in \Gr(\Phi).$ Therefore, 
\[
f(x,y)\le \ilim_{n\to\infty}f(x^{(n)},y^{(n)})\le \lambda,
\]
that is, $(x,y)\in \mathcal{D}_f(\lambda;\Gr(\Phi)).$ Thus  
the set $\mathcal{D}_f(\lambda;\Gr(\Phi))$ is closed in ${\rm Dom\,}\Phi\times\Y$
for arbitrary $\lambda\in\mathbb{R}.$ Assumption (a) holds.
\qed \end{proof}

The following corollary establishes that assumption~(i) in Lemma~\ref{k-inf-compact-strict} can be substituted by lower semi-continuity of $f:\Gr(\Phi)\subset\X\times \Y\mapsto \overline{\mathbb{R}}.$

\begin{corollary}\label{strict lsc on}
Let ${\Phi}:\X\mapsto S(\Y)$ be a strict set-valued mapping and $f:\Gr(\Phi)\subset\X\times \Y\mapsto \overline{\mathbb{R}}$ be a function satisfying assumption~(ii) of Lemma~\ref{k-inf-compact-strict}. Then for each $\lambda\in\mathbb{R}$ the set $\mathcal{D}_f(\lambda;\Gr(\Phi))$ is closed in $\X\times\Y$ if and only if the function $f:\Gr(\Phi)\subset\X\times \Y\mapsto \overline{\mathbb{R}}$ is lower semi-continuous.
\end{corollary}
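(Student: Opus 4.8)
The plan is to derive the equivalence by routing both directions through the single notion of $\K$-inf-compactness, invoking Lemma~\ref{k-inf-compact-strict} and Lemma~\ref{k-inf-compact} as the two halves of the argument. The crucial preliminary observation is that, since $\Phi:\X\mapsto S(\Y)$ is strict, one has ${\rm Dom\,}\Phi=\X$. This one identity aligns the two lemmas perfectly: the limit-point condition in assumption~(ii) of Lemma~\ref{k-inf-compact-strict}, which quantifies over sequences $\{x^{(n)}\}$ with values in $\X$ and limit $x\in\X$, becomes \emph{verbatim} the limit-point condition in assumption~(ii) of Lemma~\ref{k-inf-compact}, which quantifies over sequences with values in ${\rm Dom\,}\Phi$ and limit in ${\rm Dom\,}\Phi$; moreover closedness in $\X\times\Y$ coincides with closedness in ${\rm Dom\,}\Phi\times\Y$.

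With this identification in hand, I would proceed as follows. Because $f$ is assumed to satisfy assumption~(ii) of Lemma~\ref{k-inf-compact-strict}, it therefore also satisfies assumption~(ii) of Lemma~\ref{k-inf-compact}. I would then read each lemma as a biconditional in which one conjunct is fixed by this standing hypothesis. Applying Lemma~\ref{k-inf-compact-strict} shows that $f$ is $\K$-inf-compact on $\Gr(\Phi)$ if and only if, for every $\lambda\in\mathbb{R}$, the level set $\mathcal{D}_f(\lambda;\Gr(\Phi))$ is closed in $\X\times\Y$. Applying Lemma~\ref{k-inf-compact} shows that $f$ is $\K$-inf-compact on $\Gr(\Phi)$ if and only if $f$ is lower semi-continuous. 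Chaining these two equivalences through the common middle term ``$f$ is $\K$-inf-compact on $\Gr(\Phi)$'' yields precisely the asserted equivalence between closedness of all level sets and lower semi-continuity of $f$.

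There is no serious obstacle here; the entire content is the bookkeeping that justifies supplying assumption~(ii) to both lemmas at once and that identifies the two ambient product spaces. The one point requiring care is verifying that the wordings of the two limit-point conditions truly coincide after the substitution ${\rm Dom\,}\Phi=\X$, so that invoking Lemma~\ref{k-inf-compact} is legitimate even though its statement is phrased for possibly non-strict $\Phi$. As an alternative that sidesteps Lemma~\ref{k-inf-compact-strict} altogether, one could extract the result directly from the final paragraph of the proof of Lemma~\ref{k-inf-compact}, which already establishes that, under assumption~(b) (the limit-point condition), closedness of the level sets in ${\rm Dom\,}\Phi\times\Y$ is equivalent to lower semi-continuity of $f$; specializing to ${\rm Dom\,}\Phi=\X$ then gives the corollary at once.
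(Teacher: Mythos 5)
Your proposal is correct and matches the paper's own proof, which likewise derives the corollary by combining Lemma~\ref{k-inf-compact-strict} and Lemma~\ref{k-inf-compact}; your write-up simply makes explicit the chaining through $\K$-inf-compactness and the identification ${\rm Dom\,}\Phi=\X$ that the paper leaves to the reader. Nothing further is needed.
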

\begin{proof}
This corollary follows directly from Lemmas~\ref{k-inf-compact-strict} and \ref{k-inf-compact}.  
\qed \end{proof}

%

A set-valued mapping ${F}:\X
\mapsto 2^{\Y}$ is \textit{upper semi-continuous} at $x\in{\rm Dom\,}F$ if, for
each neighborhood $\mathcal{G}$ of the set $F(x),$ there is a
neighborhood of $x,$ say $U(x),$ such that
$F(x^*)\subset \mathcal{G}$ for all $x^*\in U(x)\cap {\rm Dom\,}F;$ a
set-valued mapping ${F}:\X
\mapsto 2^{\Y}$ is \textit{lower
semi-continuous} at $x\in{\rm Dom\,}F$ if, for each open set $\mathcal{G}$
with $F(x) \cap \mathcal{G} \neq \emptyset,$ there is a
neighborhood of $x,$ say $U(x),$ such that if $x^*\in
U(x)\cap {\rm Dom\,}F ,$ then
$F(x^*)\cap \mathcal{G}\ne\emptyset$ (see e.g., Berge
\cite[p.~109]{Ber} or Zgurovsky et al. \cite[Chapter~1,
p.~7]{ZMK1}). We note that a
set-valued mapping ${F}:\X
\mapsto 2^{\Y}$ is {lower
semi-continuous} at $x\in{\rm Dom\,}F$ if and only if, 
for each sequence $\{x^{(n)}\}_{n=1,2,\ldots}\subset {\rm Dom\,}F$ converging to 
$x$ and for each $y\in F(x),$ there exists a sequence $\{y^{(n)}\}_{n=1,2,\ldots}$  such that $y^{(n)}\in F(x^{(n)})$ and $y$ is a limit point of $\{y^{(n)}\}_{n=1,2,\ldots}.$  A set-valued mapping is called \textit{upper / lower
semi-continuous}, if it is upper /
lower
semi-continuous at all $x\in{\rm Dom\,}F.$

The following sufficient conditions for $\K$-inf-compactness were introduced in Feinberg et al.~\cite[Lemma~2.1]{Feinberg et al} for $\Phi:\X\mapsto S(\Y).$ 
\begin{lemma}\label{lm0}
Let $\Phi:\X\mapsto 2^{\Y}$ be a set-valued mapping and  $f:\Gr(\Phi)\subset\X\times \Y\mapsto
\overline{\mathbb{R}}$ be a function. Then the following statements hold:
\begin{itemize}
\item[{\rm(a)}] if $f:{\rm Gr}(\Phi)\subset\X\times \Y\mapsto \overline{\mathbb{R}}$ is inf-compact on
${\rm Gr}(\Phi),$ then the function $f$ is
$\K$-inf-compact on ${\rm Gr}(\Phi);$
\item[{\rm(b)}] if $f:{\rm Gr}(\Phi)\subset\X\times \Y\mapsto \overline{\mathbb{R}}$ is lower
semi-continuous and $\Phi:\X\mapsto 2^{\Y}$ is upper semi-continuous and compact-valued at each $x\in{\rm Dom\,}\Phi,$ then the function
$f$ is $\K$-inf-compact on ${\rm Gr} ({{\Phi}}).$
\end{itemize}
\end{lemma}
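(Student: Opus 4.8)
The plan is to treat the two statements separately, deriving part~(a) directly from the definition of inf-compactness and deducing part~(b) from the characterization of $\K$-inf-compactness provided by Lemma~\ref{k-inf-compact}.

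For part~(a), I would fix $C\in\K(\mathrm{Dom}\,\Phi)$ and $\lambda\in\mathbb{R},$ and begin from the set identity
\[
\mathcal{D}_f(\lambda;\mathrm{Gr}_C(\Phi))=\mathcal{D}_f(\lambda;\mathrm{Gr}(\Phi))\cap (C\times\Y),
\]
which holds because $\mathrm{Gr}_C(\Phi)=\mathrm{Gr}(\Phi)\cap(C\times\Y).$ Since $f$ is inf-compact on $\mathrm{Gr}(\Phi),$ the level set $\mathcal{D}_f(\lambda;\mathrm{Gr}(\Phi))$ is compact in $\X\times\Y;$ and since $C$ is compact, hence closed in $\X,$ the product $C\times\Y$ is closed in $\X\times\Y.$ The intersection of a compact set with a closed set is compact, so $\mathcal{D}_f(\lambda;\mathrm{Gr}_C(\Phi))$ is compact. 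As $\lambda$ is arbitrary, $f$ is inf-compact on $\mathrm{Gr}_C(\Phi),$ and as $C$ is arbitrary, $f$ is $\K$-inf-compact on $\mathrm{Gr}(\Phi).$

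For part~(b), I would verify the two conditions of Lemma~\ref{k-inf-compact}. Condition~(i), lower semi-continuity of $f,$ is part of the hypothesis, so only condition~(ii) requires work. Let $\{x^{(n)}\}\subset \mathrm{Dom}\,\Phi$ converge to $x\in\mathrm{Dom}\,\Phi$ and let $y^{(n)}\in\Phi(x^{(n)}).$ I claim $\{y^{(n)}\}$ already has a limit point in $\Phi(x),$ so the boundedness of $\{f(x^{(n)},y^{(n)})\}$ is not even needed. Fix $\epsilon>0$ and consider the open $\epsilon$-neighborhood $\mathcal{G}_\epsilon=\{y\in\Y\,:\,d(y,\Phi(x))<\epsilon\}$ of $\Phi(x).$ Upper semi-continuity of $\Phi$ at $x$ supplies a neighborhood $U(x)$ with $\Phi(x^*)\subset\mathcal{G}_\epsilon$ for all $x^*\in U(x)\cap\mathrm{Dom}\,\Phi;$ since $x^{(n)}\to x$ and each $x^{(n)}\in\mathrm{Dom}\,\Phi,$ eventually $x^{(n)}\in U(x),$ whence $d(y^{(n)},\Phi(x))<\epsilon$ for all large $n.$ As $\epsilon>0$ is arbitrary, $d(y^{(n)},\Phi(x))\to 0.$

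The main step is to convert this distance convergence into an actual limit point, and this is where compact-valuedness of $\Phi(x)$ is essential. For each large $n$ I would choose $z^{(n)}\in\Phi(x)$ with $d(y^{(n)},z^{(n)})\le d(y^{(n)},\Phi(x))+\tfrac1n;$ since $\Phi(x)$ is compact, $\{z^{(n)}\}$ has a subsequence $z^{(n_k)}\to z\in\Phi(x),$ and then $d(y^{(n_k)},z)\le d(y^{(n_k)},z^{(n_k)})+d(z^{(n_k)},z)\to 0,$ so $y^{(n_k)}\to z\in\Phi(x).$ Thus $z$ is the required limit point, condition~(ii) holds, and Lemma~\ref{k-inf-compact} yields that $f$ is $\K$-inf-compact on $\mathrm{Gr}(\Phi).$ I expect the only delicate point to be the bookkeeping with $\mathrm{Dom}\,\Phi$ in the upper semi-continuity definition, since the neighborhood condition only constrains $\Phi$ at points of $U(x)\cap\mathrm{Dom}\,\Phi;$ this causes no trouble here because each $x^{(n)}$ is assumed to lie in $\mathrm{Dom}\,\Phi.$
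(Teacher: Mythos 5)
Your proof is correct, but it takes a genuinely different route from the paper. The paper disposes of both parts in two lines: using Remark~\ref{rem:comp} it identifies $\K(\mathrm{Dom}\,\Phi)$ with the compact subsets of $\X$ contained in $\mathrm{Dom}\,\Phi$, restricts $\Phi$ to $\mathrm{Dom}\,\Phi$ so that the mapping becomes strict, and then invokes the cited result for strict mappings (Feinberg et al.\ \cite[Lemma~2.1]{Feinberg et al}). You instead give a self-contained argument: part~(a) via the level-set identity $\mathcal{D}_f(\lambda;\mathrm{Gr}_C(\Phi))=\mathcal{D}_f(\lambda;\mathrm{Gr}(\Phi))\cap(C\times\Y)$ and the fact that a compact set meets a closed set in a compact set, and part~(b) by checking the two conditions of Lemma~\ref{k-inf-compact} directly, using the $\epsilon$-neighborhood form of upper semi-continuity plus compact-valuedness to extract a limit point. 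Both arguments are sound, and your use of Lemma~\ref{k-inf-compact} is not circular since that lemma precedes Lemma~\ref{lm0} in the paper and is proved independently of it. What your route buys is independence from the external reference and a sharper conclusion in~(b): you show that \emph{every} sequence $\{y^{(n)}\}$ with $y^{(n)}\in\Phi(x^{(n)})$ has a limit point in $\Phi(x)$, irrespective of the bounded-above condition on $\{f(x^{(n)},y^{(n)})\}$ --- in effect you re-prove one direction of Lemma~\ref{k-upper semi-comp} (that upper semi-continuous compact-valued mappings are $\K$-upper semi-compact), which the paper obtains separately by citation. What the paper's route buys is brevity and consistency with its general strategy of reducing non-strict mappings to the strict case handled in the earlier literature.
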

\begin{proof}
In view of Remark~\ref{rem:comp}, Feinberg et al.~\cite[Lemma~2.1]{Feinberg et al}, being applied to
$\Xx:={\rm Dom\,}\Phi,$ $\Yy:=\A,$ $u:=f,$ and $\Phii:=\Phi\big|_{\Xx},$ implies all the statements of the lemma.
\qed \end{proof}
\begin{definition}\label{def:kuppersemicomp}
{\rm (cf. Feinberg et al. \cite[Definition~2.3]{FKV})}
A set-valued mapping $F:\X\mapsto 2^{\Y}$ is \textit{$\K$-upper semi-compact} if for each $C\in\K({\rm Dom\,}F)$ the set ${\rm Gr}_C(F)$
is compact.
\end{definition}

The following lemma provides the necessary and sufficient conditions for $\K$-upper semi-compactness of
a possibly non-strict set-valued mapping $\Phi:\X\mapsto 2^{\Y}.$ For $\Phi:\X\mapsto S(\Y),$ this statement follows from Feinberg et al.~\cite[Theorem~2.5]{FKV}.
\begin{lemma}\label{k-upper semi-comp}
A set-valued mapping $\Phi:\X\mapsto 2^{\Y}$ is $\K$-upper semi-compact if and only if it is upper semi-continuous and compact-valued at each $x\in{\rm Dom\,}\Phi.$
\end{lemma}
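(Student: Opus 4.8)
The plan is to reduce the statement for a possibly non-strict $\Phi$ to the strict case, exactly as was done for $\K$-inf-compactness in the proof of Lemma~\ref{k-inf-compact}. The guiding observation is that all three properties in question---$\K$-upper semi-compactness, upper semi-continuity at the points of ${\rm Dom\,}\Phi$, and compact-valuedness at those points---refer only to points of ${\rm Dom\,}\Phi$, so they are unaffected when the ambient space $\X$ is replaced by ${\rm Dom\,}\Phi$, equipped with the same metric as on $\X$.

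First I would pass to the restriction $\Phi\big|_{{\rm Dom\,}\Phi}:{\rm Dom\,}\Phi\mapsto S(\Y)$, which is a strict set-valued mapping with ${\rm Dom\,}(\Phi\big|_{{\rm Dom\,}\Phi})={\rm Dom\,}\Phi$ and with the same graph as $\Phi$. By Remark~\ref{rem:comp}, $\K({\rm Dom\,}\Phi)=\{C\subset{\rm Dom\,}\Phi\,:\,C\in\K(\X)\}$, and for every such $C$ the set ${\rm Gr}_C(\Phi)$ does not depend on whether $\X$ or ${\rm Dom\,}\Phi$ is taken as the base space. Hence $\Phi$ is $\K$-upper semi-compact in the sense of Definition~\ref{def:kuppersemicomp} if and only if $\Phi\big|_{{\rm Dom\,}\Phi}$ is $\K$-upper semi-compact.

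Next I would verify that upper semi-continuity and compact-valuedness of $\Phi$ at a point $x\in{\rm Dom\,}\Phi$ coincide with the corresponding properties of $\Phi\big|_{{\rm Dom\,}\Phi}$ at $x$. Compact-valuedness is immediate, since the value $\Phi(x)$ is the same set in both cases. For upper semi-continuity, the definition quantifies only over $x^*\in U(x)\cap{\rm Dom\,}\Phi$; a relative neighborhood of $x$ in the subspace ${\rm Dom\,}\Phi$ has the form $U(x)\cap{\rm Dom\,}\Phi$ for some neighborhood $U(x)$ of $x$ in $\X$, so the requirement that $\Phi(x^*)\subset\mathcal{G}$ for all $x^*$ in the neighborhood intersected with the domain is literally the same condition in either space.

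Finally, I would apply Feinberg et al.~\cite[Theorem~2.5]{FKV} to the strict mapping $\Phi\big|_{{\rm Dom\,}\Phi}$, obtaining that it is $\K$-upper semi-compact if and only if it is upper semi-continuous and compact-valued at each $x\in{\rm Dom\,}\Phi$. Combining this equivalence with the two equivalences established above yields the claim. The only point requiring care---and the step I expect to be the main obstacle---is the verification that the paper's topological definition of upper semi-continuity is genuinely invariant under passing to the subspace ${\rm Dom\,}\Phi$; once this invariance is in hand, the remainder is a direct citation.
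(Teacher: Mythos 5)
Your proposal is correct and takes essentially the same route as the paper: the paper's entire proof is exactly this reduction, citing Remark~\ref{rem:comp} and applying Feinberg et al.~\cite[Theorem~2.5]{FKV} to the strict restriction $\Phi\big|_{{\rm Dom\,}\Phi}$, stated in a single sentence. The subspace-invariance checks you identify as the main point of care (compactness of ${\rm Gr}_C(\Phi)$ being intrinsic, and upper semi-continuity quantifying only over $U(x)\cap{\rm Dom\,}\Phi$) are precisely the details the paper leaves implicit, and they go through as you describe.
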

\begin{proof}
In view of Remark~\ref{rem:comp}, Feinberg et al.~\cite[Theorem~2.5]{FKV}, being applied to 
$\Xx:={\rm Dom\,}\Phi,$ $\Yy:=\A,$ $u:=f,$ and $\Psi:=\Phi\big|_{\Xx},$ 
 implies the statement of the lemma.
\qed \end{proof}

\section{Continuity Properties of Minima}\label{s2}
Let $\X, \Y$ be metric spaces, $\Phi:\X\mapsto 2^{\Y}$ be a set-valued mapping with ${\rm Dom\,}\Phi\ne \emptyset,$ and  $f:\Gr(\Phi)\subset\X\times \Y\mapsto 
\overline{\mathbb{R}}$ be a function.  Define
the \textit{value function}
\begin{equation}\label{eq1star}
f^*(x):=\inf\limits_{y\in {\Phi}(x)}f(x,y),\qquad 
x\in{\rm Dom\,}\Phi,
\end{equation}
and the \textit{solution multifunction}
\begin{equation}\label{e:defFi*}
{\Phi}^*(x):=\left\{y\in
{\Phi}(x):\,f^*(x)=f(x,y)\right\},\quad x\in{\rm Dom\,}\Phi.
\end{equation}
%

According to Berge's theorem  \cite[Theorem~2, p.~116]{Ber}, under assumptions of Lemma~\ref{lm0}(b), the function $f^*$ is lower semi-continuous if the set-valued mapping $\Phi:\X\mapsto2^{\Y}$ is strict.  For metric spaces $\X$ and $\Y,$
the following theorem generalizes Berge's theorems from Feinberg et al. \cite[Theorems~2.1(ii) and 3.4]{FKV} and \cite[Theorem~3.1]{Feinberg et al} to a possibly non-strict set-valued mapping $\Phi:\X\mapsto2^{\Y}.$
\begin{theorem}\label{th:BGL}
If a function $f:{\rm Gr}(\Phi)\subset \X \times \Y \mapsto \overline{\mathbb{R}}$ is
$\K$-inf-compact on $\Gr(\Phi),$ then the value function $f^*:{\rm Dom\,}\Phi \subset
\X\mapsto \overline{\mathbb{R}}$ defined in (\ref{eq1star}) is lower semi-continuous.
Moreover, the infimum in (\ref{eq1star}) can be
replaced with the minimum and the nonempty sets $\{{\Phi}^*(x)\}_{x\in{\rm Dom\,}\Phi}$ defined in (\ref{e:defFi*})
satisfy the following properties:
\begin{itemize}
\item[{\rm(a)}] the graph ${\rm Gr}({\Phi}^*)$ is a Borel subset of\, $\X\times \Y;$
\item[{\rm(b)}] if $f^*(x)=+\infty,$
then ${\Phi}^*(x)={\Phi}(x),$ and, if $f^*(x)<+\infty,$ then
${\Phi}^*(x)$ is compact; $x\in {\rm Dom\,}\Phi.$
\end{itemize}
\end{theorem}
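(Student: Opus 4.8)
The plan is to reduce everything to the already-proved $\K$-inf-compactness characterization in Lemma~\ref{k-inf-compact} and then exploit it level set by level set. First I would establish lower semi-continuity of $f^*$. Fix $x\in{\rm Dom\,}\Phi$ and a sequence $\{x^{(n)}\}\subset{\rm Dom\,}\Phi$ converging to $x$; I must show $\ilim_{n\to\infty}f^*(x^{(n)})\ge f^*(x)$. Passing to a subsequence, assume the liminf is an actual limit, call it $\lambda$; if $\lambda=+\infty$ there is nothing to prove, so take $\lambda<+\infty$ and fix any real $\mu>\lambda$. For large $n$ we have $f^*(x^{(n)})<\mu$, so by definition of the infimum there exist $y^{(n)}\in\Phi(x^{(n)})$ with $f(x^{(n)},y^{(n)})<\mu$. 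The sequence $\{f(x^{(n)},y^{(n)})\}$ is then bounded above, so assumption~(ii) of Lemma~\ref{k-inf-compact} supplies a limit point $y\in\Phi(x)$; along the corresponding subsequence, lower semi-continuity of $f$ (assumption~(i)) gives $f(x,y)\le\mu$, whence $f^*(x)\le f(x,y)\le\mu$. Letting $\mu\downarrow\lambda$ yields $f^*(x)\le\lambda$, which is the desired inequality.

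Next I would show the infimum is attained whenever $f^*(x)<+\infty$, and simultaneously prove part~(b). Fix such an $x$ and treat the singleton $C=\{x\}\in\K({\rm Dom\,}\Phi)$. By Definition~\ref{def:kinf}, $f$ is inf-compact on ${\rm Gr}_C(\Phi)=\{x\}\times\Phi(x)$, so the level sets $\mathcal{D}_f(\mu;{\rm Gr}_C(\Phi))$ are compact for every real $\mu$. Choosing $\mu$ with $f^*(x)\le\mu<+\infty$, this level set is a nonempty compact set on which the lower semi-continuous function $y\mapsto f(x,y)$ attains its minimum over $\Phi(x)$; that minimum equals $f^*(x)$, so $\Phi^*(x)$ is nonempty and the infimum becomes a minimum. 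Moreover $\Phi^*(x)=\mathcal{D}_f(f^*(x);{\rm Gr}_C(\Phi))$ is a closed subset of the compact level set $\mathcal{D}_f(\mu;{\rm Gr}_C(\Phi))$, hence compact. The case $f^*(x)=+\infty$ forces $f(x,y)=+\infty$ for all $y\in\Phi(x)$, so trivially $\Phi^*(x)=\Phi(x)$; this settles part~(b) and the nonemptiness of all $\Phi^*(x)$.

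For part~(a), the Borel measurability of ${\rm Gr}(\Phi^*)$, I would write the graph explicitly as
\[
{\rm Gr}(\Phi^*)=\{(x,y)\in{\rm Gr}(\Phi)\,:\,f(x,y)=f^*(x)\}=\{(x,y)\in{\rm Gr}(\Phi)\,:\,f(x,y)-f^*(x)\le 0\},
\]
the last equality holding because $f(x,y)\ge f^*(x)$ always. Here $f$ is lower semi-continuous on ${\rm Gr}(\Phi)$ and hence Borel, and $f^*$ is lower semi-continuous on ${\rm Dom\,}\Phi$ by the first step, so $(x,y)\mapsto f^*(x)$ is Borel as well; $\Gr(\Phi)$ itself is Borel, being the level set $\mathcal{D}_f(+\infty;\Gr(\Phi))$ which is closed whenever the $\mathcal{D}_f(\lambda;\Gr(\Phi))$ are (one may argue closedness of $\Gr(\Phi)$ directly from assumption~(ii) of Lemma~\ref{k-inf-compact}). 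Some care is needed with the $\R$-valued arithmetic $f(x,y)-f^*(x)$ where both may be $+\infty$, so I would instead phrase the defining condition as the Borel set where the Borel function $f$ agrees with the Borel function $f^*\circ{\rm pr}_\X$; the difference of two Borel functions into $\R$ is Borel away from the ambiguous $\infty-\infty$ locus, and on that locus the condition $f(x,y)=f^*(x)=+\infty$ is itself Borel.

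The main obstacle I anticipate is purely the measurability bookkeeping in part~(a): handling the extended-real-valued subtraction cleanly and confirming that $\Gr(\Phi)$ is Borel (rather than assuming $\Phi$ strict, as in the cited source). The analytic core—attainment and lower semi-continuity—follows almost mechanically from Lemma~\ref{k-inf-compact} and the compactness of level sets over singletons, so the only genuine novelty relative to the strict case handled in Feinberg et al.~\cite{FKV,Feinberg et al} is carrying the argument through on the possibly proper subset ${\rm Dom\,}\Phi$ of $\X$, which Remark~\ref{rem:Dom} and Lemma~\ref{k-inf-compact} have already set up.
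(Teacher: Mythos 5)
Your treatment of lower semi-continuity, attainment of the infimum, and part (b) is correct, and it is a genuinely different route from the paper's: the paper proves Theorem~\ref{th:BGL} purely by reduction, applying the strict-case results of Feinberg et al.~\cite{FKV,Feinberg et al} to the state space ${\rm Dom\,}\Phi$ via Remark~\ref{rem:comp}, whereas you rederive those facts directly from Lemma~\ref{k-inf-compact} (for semi-continuity) and from inf-compactness of $f$ on $\Gr_{\{x\}}(\Phi)$ (for attainment and compactness of $\Phi^*(x)$). Those arguments are sound, including the case $f^*(x)=-\infty$, where $\Phi^*(x)=\bigcap_{n\ge 1}\mathcal{D}_f(-n;\Gr_{\{x\}}(\Phi))$ is a nested intersection of nonempty compact sets.

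The genuine gap is in part (a), at exactly the step you flagged: Borel measurability of $\Gr(\Phi)$. Both of your justifications fail. There is no level set at $\lambda=+\infty$: the union $\bigcup_{\lambda\in\mathbb{R}}\mathcal{D}_f(\lambda;\Gr(\Phi))$ equals $\{f<+\infty\}$ and misses every point where $f=+\infty$; indeed $f\equiv+\infty$ is $\K$-inf-compact on an \emph{arbitrary} graph, since all its level sets are empty, so the level sets carry no information about that part of $\Gr(\Phi)$. Nor does assumption~(ii) of Lemma~\ref{k-inf-compact} give closedness of $\Gr(\Phi)$: it constrains only sequences along which $f$ stays bounded above (take $\Phi(x)=(0,1]$, $f(x,y)=1/y$, which is $\K$-inf-compact with non-closed graph). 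Worse, $\Gr(\Phi)$ need not be Borel at all: let $\X=\Y=\mathbb{R}$, let $D\subset\mathbb{R}$ be non-Borel, ${\rm Dom\,}\Phi=D$, $\Phi(x)=\{0\}$, and $f\equiv 0$; for every $C\in\K(D)$ the level sets on $\Gr_C(\Phi)$ are $C\times\{0\}$ or $\emptyset$, so $f$ is $\K$-inf-compact, yet $\Gr(\Phi^*)=\Gr(\Phi)=D\times\{0\}$ is not Borel in $\mathbb{R}^2$. So conclusion (a) is actually false under the theorem's literal hypotheses, and no measurability bookkeeping can close your gap; the missing (clearly intended) hypothesis is $\Gr(\Phi)\in\mathcal{B}(\X\times\Y)$, or else ``Borel'' must be read relative to ${\rm Dom\,}\Phi\times\Y$. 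You are in good company: the paper's own proof has the same soft spot, since the cited theorem applied over ${\rm Dom\,}\Phi$ yields Borel measurability only in the relative $\sigma$-field of ${\rm Dom\,}\Phi\times\Y$, and the promotion to $\mathcal{B}(\X\times\Y)$ is silent. Once $\Gr(\Phi)$ is assumed Borel, your argument does finish part (a); to avoid extended-real subtraction altogether, write
\[
\Gr(\Phi^*)=\bigcap_{q\in\mathbb{Q}}\Bigl(\mathcal{D}_f(q;\Gr(\Phi))\cup\{(x,y)\in\Gr(\Phi)\,:\,f^*(x)\ge q\}\Bigr),
\]
whose constituents are Borel in $\X\times\Y$: $f$ is lower semi-continuous on the metric space $\Gr(\Phi)$, so $\mathcal{D}_f(q;\Gr(\Phi))$ is relatively closed; $f^*$ is lower semi-continuous on ${\rm Dom\,}\Phi$, so $\{f^*\ge q\}$ is relatively $G_\delta$ and its preimage under the projection is relatively Borel in $\Gr(\Phi)$; and relatively Borel subsets of a Borel set are Borel.
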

\begin{proof}
According to Remark~\ref{rem:comp}, Feinberg et al. \cite[Theorems~2.1(ii) and 3.4]{FKV}, being applied to
$\Xx:={\rm Dom\,}\Phi,$ $\Yy:=\A,$ $u:=f,$ and $\Phii:=\Phi\big|_{\Xx},$ implies that the value function $f^*:{\rm Dom\,}\Phi \subset
\X\mapsto \overline{\mathbb{R}}$ is lower semi-continuous.
Moreover, Feinberg et al. \cite[Theorem~3.1]{Feinberg et al}, being applied to $\Xx:={\rm Dom\,}\Phi,$ $\Yy:=\A,$ $u:=f,$ and $\Phii:=\Phi\big|_{\Xx},$ implies that the infimum in (\ref{eq1star}) can be
replaced with the minimum and the nonempty sets $\{{\Phi}^*(x)\}_{x\in{\rm Dom\,}\Phi}$ defined in (\ref{e:defFi*})
satisfy properties (a) and (b).
\qed \end{proof}

The following theorem describes sufficient conditions for upper semi-continuity of the value function $f^*$ defined in (\ref{eq1star}). A more general result is presented in Feinberg and Kasyanov \cite[Theorem~4]{FKSVAN}, which can be   generalized to a possibly nonstrict set-valued mapping $\Phi.$ However, for the purposes of this paper we need only the following theorem for metric spaces.
\begin{theorem}{\rm (Hu and Papageorgiou \cite[Proposition~3.1, p.~82]{Hu})}\label{th:BGU}
If a set-valued mapping $\Phi: \X \mapsto S({\Y})$ is lower
semi-continuous and a function $f:{\rm Gr}(\Phi)\subset \X \times \Y \mapsto \overline{\mathbb{R}}$ is
upper semi-continuous, then the value function $f^*:
\X\mapsto \overline{\mathbb{R}}$ defined in (\ref{eq1star}) is upper semi-continuous.
\end{theorem}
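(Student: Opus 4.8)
The plan is to verify upper semi-continuity of $f^*$ via its sequential characterization. Since $\Phi$ is strict, ${\rm Dom\,}\Phi=\X,$ so it suffices to fix $x\in\X$ together with an arbitrary sequence $\{x^{(n)}\}_{n=1,2,\ldots}$ converging to $x$ and to prove that $\slim_{n\to\infty}f^*(x^{(n)})\le f^*(x).$ When $f^*(x)=+\infty$ this holds automatically, so the real content is the case $f^*(x)<+\infty.$

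First I would use the definition of the value function to select a near-optimal action at $x$: for every real $\lambda>f^*(x)$ there exists $y\in\Phi(x)$ with $f(x,y)<\lambda.$ Lower semi-continuity of $\Phi$ at $x,$ recalled in its sequential form in Section~\ref{s3}, then yields actions $y^{(n)}\in\Phi(x^{(n)}),$ $n=1,2,\ldots,$ for which $y$ is a limit point of $\{y^{(n)}\}_{n=1,2,\ldots}.$ Because $y^{(n)}\in\Phi(x^{(n)}),$ the pointwise bound $f^*(x^{(n)})\le f(x^{(n)},y^{(n)})$ holds for every $n,$ and upper semi-continuity of $f$ at the point $(x,y)\in\Gr(\Phi)$ controls the right-hand side along any subsequence on which $y^{(n)}$ actually converges to $y.$

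The step I expect to demand the most care is the subsequence bookkeeping, since lower semi-continuity supplies only a limit point of $\{y^{(n)}\}$ rather than a convergent sequence, whereas the conclusion concerns the full upper limit. To reconcile these, I would set $L:=\slim_{n\to\infty}f^*(x^{(n)})$ and first extract a subsequence $\{x^{(n_k)}\}_{k=1,2,\ldots}$ along which $f^*(x^{(n_k)})\to L$; applying lower semi-continuity of $\Phi$ to this subsequence produces $y^{(n_k)}\in\Phi(x^{(n_k)})$ having $y$ as a limit point, hence a further subsequence with $(x^{(n_{k_i})},y^{(n_{k_i})})\to(x,y)$ in $\Gr(\Phi).$ Chaining the inequalities along this sub-subsequence gives
\[
L=\lim_{i\to\infty}f^*(x^{(n_{k_i})})\le\slim_{i\to\infty}f(x^{(n_{k_i})},y^{(n_{k_i})})\le f(x,y)<\lambda,
\]
where the equality holds because $\{f^*(x^{(n_{k_i})})\}$ is a subsequence of a sequence converging to $L,$ the first inequality is the pointwise bound $f^*\le f$ on $\Gr(\Phi)$ passed to the upper limit, the second inequality is upper semi-continuity of $f$ at $(x,y),$ and the last is the choice of $y.$ Since $\lambda>f^*(x)$ is arbitrary (letting $\lambda\downarrow f^*(x)$ when $f^*(x)$ is finite, and noting that $\lambda$ ranges over all reals when $f^*(x)=-\infty$), we obtain $L\le f^*(x),$ which is precisely the required upper semi-continuity.
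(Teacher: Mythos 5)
Your proof is correct. Note that the paper itself does not prove this statement: it is imported verbatim from Hu and Papageorgiou \cite[Proposition~3.1, p.~82]{Hu}, so there is no internal proof to compare against; what you have written is the standard direct argument for this half of Berge's maximum theorem. Your handling of the two genuine subtleties is right: (1) since the paper's sequential characterization of lower semi-continuity of $\Phi$ only guarantees that $y$ is a \emph{limit point} of $\{y^{(n)}\},$ not a limit, you correctly first pass to a subsequence realizing $L=\slim_{n\to\infty}f^*(x^{(n)})$ and only then extract the sub-subsequence along which $(x^{(n_{k_i})},y^{(n_{k_i})})\to(x,y),$ so that the value $L$ survives the extraction; and (2) the chain $L\le\slim_{i\to\infty}f(x^{(n_{k_i})},y^{(n_{k_i})})\le f(x,y)<\lambda$ is valid in $\overline{\mathbb{R}},$ with the case split $f^*(x)=+\infty$ (trivial), $f^*(x)$ finite (let $\lambda\downarrow f^*(x)$), and $f^*(x)=-\infty$ ($\lambda$ arbitrary real) covering all possibilities. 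One could shorten the bookkeeping slightly by applying upper semi-continuity of $f$ only along the convergent sub-subsequence from the start, but your arrangement is exactly what makes the full upper limit $L,$ rather than a subsequential one, appear on the left of the final inequality, which is the point most easily gotten wrong here.
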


The following theorem describes sufficient conditions
for $\K$-upper semi-compactness of the solution multifunction $\Phi^*$ defined in (\ref{e:defFi*}); see also Lemma~\ref{k-upper semi-comp}.

\begin{theorem} {\rm (Feinberg and Kasyanov \cite[Theorem~5]{FKSVAN} and Feinberg et al. \cite[p.~1045]{FKV})} \label{th:usc}
Let $\Phi: \X \mapsto S({\Y}),$ a function $f:{\rm Gr}(\Phi)\subset \X\times\Y\mapsto \overline{\mathbb{R}}$ be  $\K$-inf-compact on $\Gr(\Phi),$ and the value function
$f^*:\X\mapsto \mathbb{R}\cup\{-\infty\}$ defined in (\ref{eq1star}) be continuous. Then the infimum in (\ref{eq1star}) can be
replaced with the minimum and the solution multifunction $\Phi^*:\X\mapsto S(\Y)$ defined in (\ref{e:defFi*}) is $\K$-upper
semi-compact.
\end{theorem}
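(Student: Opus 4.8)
The plan is to first extract the attainment of the infimum and the compactness of the solution sets from Theorem~\ref{th:BGL}, and then to verify $\K$-upper semi-compactness of $\Phi^*$ directly through sequential compactness of its restricted graphs, invoking the continuity of $f^*$ at precisely the point where a boundedness-above condition is needed to apply the $\K$-inf-compactness of $f$.

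First, since $f$ is $\K$-inf-compact on $\Gr(\Phi)$, Theorem~\ref{th:BGL} guarantees that the infimum in (\ref{eq1star}) is attained, so it can be replaced with the minimum, and that $f^*$ is lower semi-continuous. Because $f^*$ takes values in $\mathbb{R}\cup\{-\infty\}$, we have $f^*(x)<+\infty$ for every $x\in\X$, so part (b) of Theorem~\ref{th:BGL} yields that each $\Phi^*(x)$ is nonempty and compact; hence $\Phi^*:\X\mapsto S(\Y)$ is a strict, compact-valued mapping, and in particular ${\rm Dom\,}\Phi^*=\X$.

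Next, to prove $\K$-upper semi-compactness I would fix an arbitrary $C\in\K(\X)=\K({\rm Dom\,}\Phi^*)$ and show that ${\rm Gr}_C(\Phi^*)$ is sequentially compact in $\X\times\Y.$ Take any sequence $\{(x^{(n)},y^{(n)})\}_{n=1,2,\ldots}\subset {\rm Gr}_C(\Phi^*).$ Since $C$ is compact, after passing to a subsequence we may assume $x^{(n)}\to x\in C.$ The decisive observation is that $y^{(n)}\in\Phi^*(x^{(n)})$ means $f(x^{(n)},y^{(n)})=f^*(x^{(n)}),$ and upper semi-continuity of $f^*$ (part of the assumed continuity) gives $\slim_{n\to\infty}f^*(x^{(n)})\le f^*(x)<+\infty,$ so the sequence $\{f(x^{(n)},y^{(n)})\}_{n=1,2,\ldots}$ is bounded above. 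Since $\Phi$ is strict and $x\in\X={\rm Dom\,}\Phi,$ assumption (ii) of Lemma~\ref{k-inf-compact-strict} (available because $f$ is $\K$-inf-compact) produces a limit point $y\in\Phi(x)$ of $\{y^{(n)}\}_{n=1,2,\ldots};$ pass to a further subsequence with $y^{(n_k)}\to y.$

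Finally, lower semi-continuity of $f,$ which follows from $\K$-inf-compactness through assumption (i) of Lemma~\ref{k-inf-compact-strict}, together with upper semi-continuity of $f^*,$ yields
\[
f(x,y)\le \ilim_{k\to\infty}f(x^{(n_k)},y^{(n_k)})=\ilim_{k\to\infty}f^*(x^{(n_k)})\le f^*(x),
\]
while $y\in\Phi(x)$ forces $f(x,y)\ge f^*(x)$ by the definition of $f^*.$ Hence $f(x,y)=f^*(x),$ that is, $y\in\Phi^*(x)$ and $(x,y)\in{\rm Gr}_C(\Phi^*).$ This establishes the sequential compactness of ${\rm Gr}_C(\Phi^*),$ and since $C$ was arbitrary, $\Phi^*$ is $\K$-upper semi-compact. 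I expect the main obstacle to be exactly the verification that $\{f(x^{(n)},y^{(n)})\}_{n=1,2,\ldots}$ is bounded above: this is the single place where the additional hypothesis of upper semi-continuity of $f^*$ (beyond the lower semi-continuity that $\K$-inf-compactness supplies for free via Theorem~\ref{th:BGL}) is indispensable, since it is what allows the $\K$-inf-compactness selection to apply. No separate treatment of the case $f^*(x)=-\infty$ is needed, because a sequence whose limit superior does not exceed a finite bound is bounded above regardless.
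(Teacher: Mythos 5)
Your proof is correct. Note, however, that the paper does not prove this theorem at all: it is imported by citation from Feinberg and Kasyanov \cite[Theorem~5]{FKSVAN} and Feinberg et al. \cite[p.~1045]{FKV}, so there is no internal argument to compare against. What your proposal provides is a self-contained reconstruction from the paper's own toolkit: Theorem~\ref{th:BGL} to get attainment of the infimum, nonemptiness, and compactness of each $\Phi^*(x)$ (using that $f^*<+\infty$ everywhere), and then assumption~(ii) of Lemma~\ref{k-inf-compact-strict} to extract a limit point $y\in\Phi(x)$ from a sequence in $\Gr_C(\Phi^*)$, with the upper semi-continuity of $f^*$ supplying exactly the boundedness-above hypothesis that makes that extraction possible. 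The closing chain of inequalities correctly forces $y\in\Phi^*(x)$, and since $\X\times\Y$ is metric, sequential compactness of $\Gr_C(\Phi^*)$ gives compactness, which is Definition~\ref{def:kuppersemicomp}. Your identification of where continuity of $f^*$ is genuinely needed (upper semi-continuity, since lower semi-continuity already follows from Theorem~\ref{th:BGL}) is accurate, and your observation that the case $f^*(x)=-\infty$ requires no separate treatment is also right, since the defining equality $f(x,y)=f^*(x)$ holds with both sides equal to $-\infty$. One small imprecision: lower semi-continuity of $f$ on $\Gr(\Phi)$ is not literally assumption~(i) of Lemma~\ref{k-inf-compact-strict} (which asserts closedness of the level sets in $\X\times\Y$); the clean reference is assumption~(i) of Lemma~\ref{k-inf-compact}, or Corollary~\ref{strict lsc on}, either of which shows lower semi-continuity follows from $\K$-inf-compactness. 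This does not affect the validity of the argument.
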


%

\section{Continuity Properties of Minimax}\label{sec:minimax}
This section describes continuity properties of minimax and solution multifunctions. These results are applied in Subsection~\ref{s5.4}, where continuity properties of the lopsided value, classic value, and solution multifunctions for the two-person zero-sum games  with possibly noncompact action sets and unbounded payoffs are described. For metric spaces the presented results can be viewed as extensions of Berge's maximum theorem for noncompact image sets and relevant statements for optimization problems from Feinberg et al.~\cite{FKV,Feinberg et al} to minimax settings.

The minimax problem introduced and studied in this section models robust optimization problems and two-person zero-sum one-step games with perfect information. In such games, players make decisions sequentially, and these games are called sometimes turn-based.  Unlike the case of games with simultaneous moves studied in Section~\ref{s5}, pure policies are sufficient for games with perfect information, and this is formally explained in Section~\ref{sec:perf}. 

Let $\Xx, \Aa$ and $\Bb$ be metric spaces, $\Phii_{\Aa}:\Xx\mapsto S(\Aa)$ and $\Phii_{\Bb}:\Gr(\Phii_{\Aa})\subset\Xx\times\Aa\mapsto S(\Bb)$ be set-valued mappings, and $\ff:{\rm Gr}(\Phii_{\Bb})\subset \Xx \times \Aa\times\Bb \mapsto \overline{\mathbb{R}}$ be a function. Define
the \textit{worst-loss function}
\begin{equation}\label{eq1starworstloss}
\ff^\sharp(x,a):=\sup\limits_{b\in \Phii_{\Bb}(x,a)}\ff(x,a,b),\qquad
(x,a)\in\Gr(\Phii_{\Aa}),
\end{equation}
the \textit{minimax} or \textit{upper value function}
\begin{equation}\label{eq1starminimax}
\vv^\sharp(x):=\inf\limits_{a\in \Phii_{\Aa}(x)}\sup\limits_{b\in \Phii_{\Bb}(x,a)}\ff(x,a,b),\qquad
x\in\Xx,
\end{equation}
and the \textit{solution multifunctions}
\begin{equation}\label{e:defFi*minimax1}
\Phii_{\Aa}^*(x):=\big\{a\in
\Phii_{\Aa}(x)\,:\,\vv^\sharp(x)=\sup\limits_{b\in \Phii_{\Bb}(x,a)}\ff(x,a,b)\big\},\quad x\in\Xx;
\end{equation}
\begin{equation}\label{e:defFi*minimax2}
\Phii_{\Bb}^*(x,a):=\big\{b\in
\Phii_{\Bb}(x,a)\,:\,\sup\limits_{b^*\in \Phii_{\Bb}(x,a)}\ff(x,a,b^*)=\ff(x,a,b)\big\},\ (x,a)\in\Gr(\Phii_{\Aa}).
\end{equation}
We note that the following equalities hold:
\begin{equation}\label{eq:equal}
\begin{aligned}
\vv^\sharp(x) =\inf\limits_{a\in \Phii_{\Aa}(x)}\ff^\sharp(x,a),\quad \Phii_{\Aa}^*(x) =\big\{a\in
\Phii_{\Aa}(x)\,:\,\vv^\sharp(x)=\ff^\sharp(x,a)\big\}, \quad x\in\X;\\
\Phii_{\Bb}^*(x,a) =\big\{b\in
\Phii_{\Bb}(x,a)\,:\,\ff^\sharp(x,a)=\ff(x,a,b)\big\}, \quad(x,a)\in\Gr(\Phii_{\Aa}).
\end{aligned}
\end{equation}
%

The rest of this section establishes sufficient conditions for: 
\begin{itemize}
\item[(i)] continuity properties of the worst-loss function $\ff^\sharp$ (Theorems~\ref{cor:wloss_lsc}, \ref{th:wloss_kinfcomp}, \ref{th:wloss_usc},
\ref{th:wloss_cont},  and~\ref{th: BergeMinimax}),
\item[(ii)] continuity properties of the minimax function $\vv^\sharp$ (Theorems~\ref{th:minimax_lsc}, \ref{th:minimax_usc}, \ref{th:minimax_cont}, and \ref{th: BergeMinimax}),
\item[(iii)] continuity properties of the solution multifunctions $\Phii_{\Aa}^*$ and $\Phii_{\Bb}^*$ (Theorems~\ref{th: uscphii1}, \ref{th: uscphii2}, and \ref{th: BergeMinimax}),
\end{itemize}
when the image sets $\{\Phii_{\Aa}(x)\}_{x\in\Xx}$ and $\{\Phii_{\Bb}(x,a)\}_{(x,a)\in \Gr(\Phii_{\Aa})}$ can be noncompact. 

To state the main results of this section, we introduce
the set-valued mapping  $\Phii_\Bb^{\Aa\leftrightarrow \Bb}:\Xx\times\Bb\mapsto 2^\Aa$ uniquely defined by its graph,
\begin{equation}\label{eq:auxil23}
\Gr(\Phii_\Bb^{\Aa\leftrightarrow \Bb}):=\{(x,b,a)\in \Xx\times\Bb\times\Aa\,:\, (x,a,b)\in \Gr(\Phii_{\Bb})\},
\end{equation}
that is,
\[
\Phii_\Bb^{\Aa\leftrightarrow \Bb}(x,b)=\{a\in \Phii_\Aa(x)\,:\, b\in \Phii_\Bb(x,a)\},
\]
$(x,b)\in {\rm Dom\,}\Phii_\Bb^{\Aa\leftrightarrow \Bb}.$
We also introduce
the function $\ff^{\Aa\leftrightarrow \Bb}:\Gr(\Phii_\Bb^{\Aa\leftrightarrow \Bb})\subset (\Xx\times\Bb)\times\Aa\mapsto\R,$
\begin{equation}\label{eq:auxil3}
\ff^{\Aa\leftrightarrow \Bb}(x,b,a):=\ff(x,a,b),\quad (x,a,b)\in \Gr(\Phii_{\Bb}).
\end{equation}
%
According to (\ref{eq:auxil23}), the following equalities hold:
\begin{equation}\label{eq:auxil4}
\begin{aligned}
{\rm Dom\,}\Phii_\Bb^{\Aa\leftrightarrow \Bb}={\rm proj}_{\Xx\times\Bb}\Gr&(\Phii_{\Bb})=\{(x,b)\in \Xx\times\Bb\,:\,\\
&(x,a,b)\in  \Gr(\Phii_{\Bb}){\rm \ for\ some\ }a\in \Aa\},
\end{aligned}
\end{equation}
where ${\rm proj}_{\Xx\times\Bb}\Gr(\Phii_{\Bb})$ is a projection  of $\Gr(\Phii_{\Bb})$ on
$\Xx\times\Bb.$
\begin{remark}\label{rem:substit}
{\rm
According to Lemma~\ref{k-inf-compact}, the function $\ff^{\Aa\leftrightarrow \Bb}:\Gr(\Phii_\Bb^{\Aa\leftrightarrow \Bb})\subset (\Xx\times\Bb)\times\Aa\mapsto\R$ defined in (\ref{eq:auxil3}), where $\Phii_\Bb^{\Aa\leftrightarrow \Bb}$ is defined in (\ref{eq:auxil23}), is $\K$-inf-compact on $\Gr(\Phii_\Bb^{\Aa\leftrightarrow \Bb})$ if and only if the following two conditions hold:
\begin{itemize}
\item[(i)] the function $\ff:{\rm Gr}(\Phii_{\Bb})\subset \Xx \times \Aa\times\Bb \mapsto \overline{\mathbb{R}}$ is lower semi-continuous;
\item[(ii)] if a sequence $\{x^{(n)},b^{(n)}\}_{n=1,2,\ldots}$ with values in ${\rm Dom\,}\Phii_\Bb^{\Aa\leftrightarrow \Bb}$
converges and its limit $(x,b)$ belongs to ${\rm Dom\,}\Phii_\Bb^{\Aa\leftrightarrow \Bb},$ then each sequence $\{a^{(n)}
\}_{n=1,2,\ldots}$ with $(x^{(n)},a^{(n)},b^{(n)})\in {\rm Gr}(\Phii_{\Bb}),$ $n=1,2,\ldots,$ satisfying
the condition that the sequence $\{\ff(x^{(n)},a^{(n)},b^{(n)}) \}_{n=1,2,\ldots}$ is
bounded above, has a limit point $a\in \Phii_\Bb^{\Aa\leftrightarrow \Bb}(x,b).$
\end{itemize}
}
\end{remark}
The following theorem establishes  sufficient conditions for lower semi-continuity of the worst-loss function $\ff^\sharp:\Gr(\Phii_{\Aa})\subset
\Xx\times\Aa\mapsto \overline{\mathbb{R}}$ defined in (\ref{eq1starworstloss}), when the image sets $\{\Phii_{\Aa}(x)\}_{x\in\Xx}$ and $\{\Phii_{\Bb}(x,a)\}_{(x,a)\in \Gr(\Phii_{\Aa})}$ are possibly noncompact.

\begin{theorem}{\rm(Lower semi-continuity of the worst-loss function)}\label{cor:wloss_lsc}
Let $\Phii_{\Bb}:\Gr(\Phii_{\Aa})\subset\Xx\times\Aa\mapsto S(\Bb)$ be a lower semi-continuous set-valued mapping and the function $\ff:{\rm Gr}(\Phii_{\Bb})\subset \Xx \times \Aa\times\Bb \mapsto \overline{\mathbb{R}}$ be lower semi-continuous. Then the worst-loss function $\ff^\sharp:
\Gr(\Phii_{\Aa})\subset\Xx\times\Aa\mapsto \overline{\mathbb{R}}$ defined in (\ref{eq1starworstloss}) is lower semi-continuous.
\end{theorem}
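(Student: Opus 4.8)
The plan is to reduce the statement to Theorem~\ref{th:BGU} by the elementary device of turning the supremum into an infimum through a sign change. Regarding $\Gr(\Phii_{\Aa})$ as a metric space with the metric inherited from $\Xx\times\Aa$, the mapping $\Phii_{\Bb}:\Gr(\Phii_{\Aa})\mapsto S(\Bb)$ is strict and, by hypothesis, lower semi-continuous. Setting $g:=-\ff$, lower semi-continuity of $\ff$ on $\Gr(\Phii_{\Bb})$ is equivalent to upper semi-continuity of $g$. I would then invoke Theorem~\ref{th:BGU} with the identifications $\Xx:=\Gr(\Phii_{\Aa})$, $\Y:=\Bb$, $\Phi:=\Phii_{\Bb}$, and $f:=g$, to conclude that
\[
g^*(x,a):=\inf_{b\in\Phii_{\Bb}(x,a)}g(x,a,b),\qquad (x,a)\in\Gr(\Phii_{\Aa}),
\]
is upper semi-continuous. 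Since $g^*(x,a)=-\ff^\sharp(x,a)$ by the definition in (\ref{eq1starworstloss}), the worst-loss function $\ff^\sharp=-g^*$ is lower semi-continuous, which is the desired conclusion.

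Because the result is in this way an immediate corollary of the cited theorem, there is no genuine obstacle along this route; the only point deserving attention is the identification of $\Gr(\Phii_{\Bb})$---on which $\ff$ is posited to be lower semi-continuous---as a subset of $(\Xx\times\Aa)\times\Bb$, canonically homeomorphic to the corresponding subset of $\Xx\times\Aa\times\Bb$, so that the semi-continuity hypotheses carry over verbatim.

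For completeness, I note that a direct, self-contained argument is also available and it exhibits where the content actually lies. Given a sequence $(x^{(n)},a^{(n)})\to(x,a)$ in $\Gr(\Phii_{\Aa})$, one sets $L:=\ilim_{n\to\infty}\ff^\sharp(x^{(n)},a^{(n)})$, passes to a subsequence along which $\ff^\sharp(x^{(n)},a^{(n)})\to L$, and fixes an arbitrary $b\in\Phii_{\Bb}(x,a)$. Lower semi-continuity of $\Phii_{\Bb}$, in its sequential form, produces points $b^{(n)}\in\Phii_{\Bb}(x^{(n)},a^{(n)})$ having $b$ as a limit point; extracting a further subsequence so that $b^{(n)}\to b$ and combining $\ff^\sharp(x^{(n)},a^{(n)})\ge\ff(x^{(n)},a^{(n)},b^{(n)})$ with lower semi-continuity of $\ff$ yields $L\ge\ff(x,a,b)$. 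Taking the supremum over $b\in\Phii_{\Bb}(x,a)$ then gives $L\ge\ff^\sharp(x,a)$. The delicate step here---and the one I would handle with care---is the nested extraction of subsequences: the first to realize the lower limit $L$, the second to upgrade the limit point supplied by lower semi-continuity of $\Phii_{\Bb}$ into a genuinely convergent sequence along which lower semi-continuity of $\ff$ may be applied.
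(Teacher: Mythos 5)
Your primary argument is exactly the paper's proof: the authors also obtain the result by applying Theorem~\ref{th:BGU} with $\X:=\Gr(\Phii_{\Aa})$, $\Y:=\Bb$, $\Phi:=\Phii_{\Bb}$, and $f:=-\ff$, so that upper semi-continuity of $(-\ff)^*$ yields lower semi-continuity of $\ff^\sharp$. The supplementary direct sequential argument you sketch is also sound, but it is not needed; the reduction to Theorem~\ref{th:BGU} is the whole proof.
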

\begin{proof}
Theorem~\ref{th:BGU},  applied to $\X:=\Gr(\Phii_{\Aa}),$ $\Y:=\Bb,$ $\Phi:=\Phii_{\Bb},$ and $f:=-\ff,$ implies that the function $\ff^\sharp:  
\Gr(\Phii_{\Aa})\subset\Xx\times\Aa\mapsto \overline{\mathbb{R}}$ is
 lower semi-continuous.
%
%
\qed \end{proof}

To state sufficient conditions for the $\K$-inf-compactness of the worst-loss function (see Theorem~\ref{th:wloss_kinfcomp}), we need to introduce the   $\Aa$-lower semi-continuity assumption
for a set-valued mapping $\Phii_{\Bb}:\Gr(\Phii_{\Aa})\subset\Xx\times\Aa\mapsto S(\Bb),$ which implies its lower semi-continuity.

\begin{definition}\label{defi:uniformAlsc}
A set-valued mapping $\Phii_{\Bb}:\Gr(\Phii_{\Aa})\subset\Xx\times\Aa\mapsto S(\Bb)$ is called {\it 
$\Aa$-lower semi-continuous}, if the following condition holds:
\begin{itemize}
\item[]
if a sequence $\{x^{(n)}\}_{n=1,2,\ldots}$ with values in $\Xx$ converges and its limit $x$ belongs to $\Xx,$ $a^{(n)}\in \Phii_\Aa(x^{(n)})$ for each $n=1,2,\ldots,$ and $b\in \Phii_\Bb(x,a)$ for some $a\in \Phii_\Aa(x),$ then there is a sequence $\{b^{(n)}\}_{n=1,2,\ldots},$ with $b^{(n)}\in \Phii_\Bb (x^{(n)},a^{(n)})$ for each $n=1,2,\ldots,$ such that $b$ is a limit point of the sequence $\{b^{(n)}\}_{n=1,2,\ldots}.$
\end{itemize}
\end{definition}

The properties of $\Aa$-lower semi-continuous functions are described in Appendix.  In particular, this assumption is stronger than lower semi-continuity. According to Lemma~\ref{lem:unifAlsc}, this assumption holds a  for lower semi-continuous multifunction  $\Phii_{\Bb}:\Gr(\Phii_{\Aa})\subset\Xx\times\Aa\mapsto S(\Bb)$ in the following two cases: (i) the multifunction $\Phii_{\Aa}:\Xx\mapsto S(\Aa)$ is upper semi-continuous and compact-valued at each $x\in\Xx,$ and (ii) the sets $\Phii_{\Bb}(x,a)$ do not depend on $a\in 
 \Phii_{\Aa}(x)$ for all $x\in\Xx,$ as this takes place  for games with players making simultaneous decisions.

The following theorem establishes sufficient conditions for $\K$-inf-compactness of the worst-loss function $\ff^\sharp:\Gr(\Phii_{\Aa})\subset
\Xx\times\Aa\mapsto \overline{\mathbb{R}}$ defined in (\ref{eq1starworstloss}), when the image sets $\{\Phii_{\Aa}(x)\}_{x\in\Xx}$ and $\{\Phii_{\Bb}(x,a)\}_{(x,a)\in \Gr(\Phii_{\Aa})}$ can be noncompact. 
We remark that we currently do not know whether the assumption, that the set-valued mapping $\ff^{\Aa\leftrightarrow \Bb}:\Gr(\Phii_\Bb^{\Aa\leftrightarrow \Bb})\subset (\Xx\times\Bb)\times\Aa\mapsto\R$ is $\Aa$-lower semi-continuous, can be relaxed in Theorems~\ref{th:wloss_kinfcomp}, \ref{th:minimax_lsc}, \ref{th:minimax_cont}, 
\ref{th: uscphii1}, and \ref{th: BergeMinimax} to the assumption that this set-valued mapping is lower semi-continuous.
\begin{theorem}{\rm($\K$-inf-compactness of the worst-loss function)}\label{th:wloss_kinfcomp}
Let $\Phii_{\Bb}:\Gr(\Phii_{\Aa})\subset\Xx\times\Aa\mapsto S(\Bb)$ be an $\Aa$-lower semi-continuous set-valued mapping and the function $\ff^{\Aa\leftrightarrow \Bb}:\Gr(\Phii_\Bb^{\Aa\leftrightarrow \Bb})\subset (\Xx\times\Bb)\times\Aa\mapsto\R$ defined in (\ref{eq:auxil3}), where $\Phii_\Bb^{\Aa\leftrightarrow \Bb}$ is defined in (\ref{eq:auxil23}), be $\K$-inf-compact on $\Gr(\Phii_\Bb^{\Aa\leftrightarrow \Bb}).$ Then the worst-loss function $\ff^\sharp:
\Gr(\Phii_{\Aa})\subset\Xx\times\Aa\mapsto \overline{\mathbb{R}}$ defined in (\ref{eq1starworstloss}) is $\K$-inf-compact on $\Gr(\Phii_{\Aa}).$
\end{theorem}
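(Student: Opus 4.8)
The plan is to verify the two characterizing conditions of Lemma~\ref{k-inf-compact} for the function $\ff^\sharp$, applying that lemma with $\X:=\Xx$, $\Y:=\Aa$, $\Phi:=\Phii_{\Aa}$, and $f:=\ff^\sharp$; since $\Phii_{\Aa}:\Xx\mapsto S(\Aa)$ is strict, one has ${\rm Dom\,}\Phii_{\Aa}=\Xx$.

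First I would dispatch condition~(i), lower semi-continuity of $\ff^\sharp$. By Remark~\ref{rem:substit}, the $\K$-inf-compactness of $\ff^{\Aa\leftrightarrow \Bb}$ forces $\ff$ to be lower semi-continuous, and, as noted after Definition~\ref{defi:uniformAlsc}, the $\Aa$-lower semi-continuity of $\Phii_{\Bb}$ implies its lower semi-continuity. Theorem~\ref{cor:wloss_lsc} then yields that $\ff^\sharp$ is lower semi-continuous.

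The substance of the proof is condition~(ii). Given $\{x^{(n)}\}_{n=1,2,\ldots}\subset\Xx$ converging to $x\in\Xx$ and $\{a^{(n)}\}_{n=1,2,\ldots}$ with $a^{(n)}\in \Phii_{\Aa}(x^{(n)})$ and $\ff^\sharp(x^{(n)},a^{(n)})\le\lambda$ for all $n$ and some $\lambda\in\mathbb{R}$, I must produce a limit point $a\in \Phii_{\Aa}(x)$ of $\{a^{(n)}\}$. The idea is to transfer this to the limit-point property of the swapped function $\ff^{\Aa\leftrightarrow \Bb}$. Since $\Phii_{\Aa}(x)\ne\emptyset$ and $\Phii_{\Bb}$ is strict, I would fix $a_0\in \Phii_{\Aa}(x)$ and $b_0\in \Phii_{\Bb}(x,a_0)$, so that $(x,a_0,b_0)\in\Gr(\Phii_{\Bb})$ and hence $(x,b_0)\in{\rm Dom\,}\Phii_\Bb^{\Aa\leftrightarrow \Bb}$. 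Applying the $\Aa$-lower semi-continuity of $\Phii_{\Bb}$ to the sequence $\{x^{(n)}\}$, the points $a^{(n)}\in \Phii_{\Aa}(x^{(n)})$, and $b_0\in \Phii_{\Bb}(x,a_0)$, I obtain $b^{(n)}\in \Phii_{\Bb}(x^{(n)},a^{(n)})$ such that $b_0$ is a limit point of $\{b^{(n)}\}$. Passing to a subsequence $\{n_k\}$ with $b^{(n_k)}\to b_0$ gives $(x^{(n_k)},b^{(n_k)})\to(x,b_0)$ in ${\rm Dom\,}\Phii_\Bb^{\Aa\leftrightarrow \Bb}$ and $(x^{(n_k)},a^{(n_k)},b^{(n_k)})\in\Gr(\Phii_{\Bb})$. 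Because $b^{(n_k)}\in \Phii_{\Bb}(x^{(n_k)},a^{(n_k)})$, one has $\ff^{\Aa\leftrightarrow \Bb}(x^{(n_k)},b^{(n_k)},a^{(n_k)})=\ff(x^{(n_k)},a^{(n_k)},b^{(n_k)})\le \ff^\sharp(x^{(n_k)},a^{(n_k)})\le\lambda$, so this sequence is bounded above. Condition~(ii) of Lemma~\ref{k-inf-compact} applied to $\ff^{\Aa\leftrightarrow \Bb}$ (equivalently, Remark~\ref{rem:substit}(ii)) then provides a limit point $a\in \Phii_\Bb^{\Aa\leftrightarrow \Bb}(x,b_0)\subset\Phii_{\Aa}(x)$ of $\{a^{(n_k)}\}$, which is also a limit point of $\{a^{(n)}\}$; this establishes condition~(ii), and Lemma~\ref{k-inf-compact} completes the argument.

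The hard part will be exactly this passage in condition~(ii): the boundedness hypothesis is on the \emph{supremum} $\ff^\sharp(x^{(n)},a^{(n)})$, whereas the available compactness tool speaks about $\ff$ along a concrete $(x,a,b)$-sequence whose $\Bb$-component converges. Bridging the two requires manufacturing companion points $b^{(n)}\in \Phii_{\Bb}(x^{(n)},a^{(n)})$ that cluster at a prescribed $b_0$ sitting in a fiber over the limit $x$, and this is precisely where $\Aa$-lower semi-continuity is essential and where mere lower semi-continuity of $\Phii_{\Bb}$ would not suffice. Once such $b^{(n)}$ are in hand, the elementary inequality $\ff\le\ff^\sharp$ transfers the bound, and the $\K$-inf-compactness of the swapped function $\ff^{\Aa\leftrightarrow \Bb}$ delivers the required cluster point of the $a^{(n)}$.
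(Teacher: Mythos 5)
Your proof is correct and follows essentially the same route as the paper's: both rest on Theorem~\ref{cor:wloss_lsc} for lower semi-continuity of $\ff^\sharp$, and then the identical key step of using $\Aa$-lower semi-continuity to manufacture companion points $b^{(n)}\in\Phii_{\Bb}(x^{(n)},a^{(n)})$ clustering at a fixed $b_0$ in a fiber over $x$, transferring the bound via $\ff\le\ff^\sharp\le\lambda$, and invoking property (ii) of Remark~\ref{rem:substit} to extract a limit point $a\in\Phii_{\Aa}(x)$. The only difference is organizational: you verify the two conditions of Lemma~\ref{k-inf-compact}, while the paper works directly with level sets over a compact $C\in\K(\Xx)$, which is an equivalent packaging of the same argument.
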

\begin{proof}
Since the function $\ff^{\Aa\leftrightarrow \Bb}:\Gr(\Phii_\Bb^{\Aa\leftrightarrow \Bb})\subset (\Xx\times\Bb)\times\Aa\mapsto\R$ defined in (\ref{eq:auxil3}), where $\Phii_\Bb^{\Aa\leftrightarrow \Bb}$ is defined in (\ref{eq:auxil23}), is $\K$-inf-compact on $\Gr(\Phii_\Bb^{\Aa\leftrightarrow \Bb}),$ we have that properties (i) and (ii) from Remark~\ref{rem:substit} hold.

To prove that the function $\ff^\sharp$ is $\K$-inf-compact on $\Gr(\Phii_{\Aa}),$  we fix arbitrary $C\in\K(\X),$ $\lambda\in\mathbb{R},$ and $\{(x^{(n)}, a^{(n)})\}_{n=1,2,\ldots}\subset \Gr_C(\Phii_{\Aa})$ such that
\begin{equation}\label{eq:fu1}
\ff^\sharp(x^{(n)},a^{(n)})\le \lambda,
\end{equation}
for each $n=1,2,\ldots,$ and establish that the sequence $\{(x^{(n)}, a^{(n)})\}_{n=1,2,\ldots}$ has a limit point $(x,a)\in \Gr_C(\Phii_{\Aa})$ satisfying
$\ff^\sharp(x,a)\le \lambda.$

According to Theorem~\ref{cor:wloss_lsc}, it is sufficient to prove that
the sequence \\ $\{(x^{(n)}, a^{(n)})\}_{n=1,2,\ldots}\subset \Gr_C(\Phii_{\Aa})$ satisfying inequality (\ref{eq:fu1}) has a limit point $(x,a)\in \Gr_C(\Phii_{\Aa}).$
%
Indeed, since $C\in\K(\Xx),$  without loss of generality we may assume that
the sequence $\{x^{(n)}\}_{n=1,2,\ldots}$ converges in $\Xx$ and its limit $x$ belongs to $C.$
To prove that the sequence $\{a^{(n)}
\}_{n=1,2,\ldots}$ has a limit point $a\in \Phii_{\Aa}(x),$ we fix an arbitrary $b\in \Phii_{\Bb}(x,a^*)$ for some $a^*\in \Phii_{\Aa}(x)$ and note that there exists a sequence $\{b^{(n)}\}_{n=1,2,\ldots}$ with $b^{(n)}\in \Phii_{\Bb}(x^{(n)},a^{(n)}),$ $n=1,2,\ldots,$ that converges and its limit equals to $b$ because the set-valued mapping $\Phii_{\Bb}:\Gr(\Phii_{\Aa})\subset\Xx\times\Aa\mapsto S(\Bb)$ is   $\Aa$-lower semi-continuous. Then, according to (\ref{eq1starworstloss}) and (\ref{eq:fu1}),  the sequence $\{\ff(x^{(n)},a^{(n)},b^{(n)}) \}_{n=1,2,\ldots}$ is
bounded above by $\lambda.$ Therefore, property (ii) from Remark~\ref{rem:substit} implies that the sequence $\{a^{(n)}
\}_{n=1,2,\ldots}$ has a limit point $a\in \Phii_{\Aa}(x).$ Therefore, the sequence $\{(x^{(n)}, a^{(n)})\}_{n=1,2,\ldots}$ has a limit point $(x,a)\in \Gr_C(\Phii_{\Aa}).$
%
\qed \end{proof}


 The following theorem establishes sufficient conditions for upper semi-continuity of the worst-loss function $\ff^\sharp:
\Gr(\Phii_{\Aa})\subset
\Xx\times\Aa\mapsto \overline{\mathbb{R}}$ defined in (\ref{eq1starworstloss}) and basic properties for the solution multifunction $\Phii_{\Bb}^*$ defined in (\ref{e:defFi*minimax2}), when the image sets $\{\Phii_{\Aa}(x)\}_{x\in\Xx}$ and $\{\Phii_{\Bb}(x,a)\}_{(x,a)\in \Gr(\Phii_{\Aa})}$ can be noncompact.

\begin{theorem}{\rm(Upper semi-continuity of the worst-loss function)}\label{th:wloss_usc}
If a function $\ff:{\rm Gr}(\Phii_{\Bb})\subset (\Xx \times \Aa)\times\Bb \mapsto \overline{\mathbb{R}}$ is $\K$-sup-compact on $\Gr(\Phii_{\Bb}),$ then the worst-loss function $\ff^\sharp:
\Gr(\Phii_{\Aa})\subset\Xx\times\Aa\mapsto \overline{\mathbb{R}}$ defined in (\ref{eq1starworstloss}) is upper semi-continuous. Moreover, the supremum in (\ref{eq1starworstloss}) can be replaced with the maximum and the nonempty sets $\{\Phii_{\Bb}^*(x,a)\}_{(x,a)\in\Gr(\Phii_{\Aa})}$ defined in (\ref{e:defFi*minimax2}) (see also the last equality in (\ref{eq:equal}))
satisfy the following properties:
\begin{itemize}
\item[{\rm(a)}] the graph ${\rm Gr}(\Phii_{\Bb}^*)$ is a Borel subset of  $\Xx\times \Aa\times\Bb;$
\item[{\rm(b)}] if $\ff^\sharp(x,a)=-\infty,$
then $\Phii_{\Bb}^*(x,a)=\Phii_{\Bb}(x,a),$ and, if $\ff^\sharp(x,a)>-\infty,$ then
$\Phii_{\Bb}^*(x,a)$ is compact.
\end{itemize}
\end{theorem}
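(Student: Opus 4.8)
The plan is to deduce the entire statement from Theorem~\ref{th:BGL} by passing to the reflected minimization problem in which $\ff$ is replaced by $-\ff$. Concretely, I would apply Theorem~\ref{th:BGL} with the state space $\X:=\Gr(\Phii_{\Aa})$ (regarded as a metric space with the metric inherited from $\Xx\times\Aa$), the action space $\Y:=\Bb$, the strict set-valued mapping $\Phi:=\Phii_{\Bb}:\Gr(\Phii_{\Aa})\mapsto S(\Bb)$, and the objective $f:=-\ff$. Since $\Phii_{\Bb}$ is strict, ${\rm Dom\,}\Phi=\Gr(\Phii_{\Aa})$, so the value function produced by Theorem~\ref{th:BGL} is defined on all of $\Gr(\Phii_{\Aa})$. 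By Definition~\ref{def:ksup}, the hypothesis that $\ff$ is $\K$-sup-compact on $\Gr(\Phii_{\Bb})$ is exactly the hypothesis that $f=-\ff$ is $\K$-inf-compact on $\Gr(\Phii_{\Bb})=\Gr(\Phi)$, which is the single assumption required to invoke Theorem~\ref{th:BGL}.

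The conclusions then transfer by negation. For the value function,
\[
f^*(x,a)=\inf_{b\in\Phii_{\Bb}(x,a)}\big(-\ff(x,a,b)\big)=-\sup_{b\in\Phii_{\Bb}(x,a)}\ff(x,a,b)=-\ff^\sharp(x,a),
\]
so the lower semi-continuity of $f^*$ delivered by Theorem~\ref{th:BGL} is precisely the upper semi-continuity of $\ff^\sharp$, and the assertion that the infimum defining $f^*$ is attained is precisely the assertion that the supremum in (\ref{eq1starworstloss}) is attained as a maximum. The corresponding solution multifunction is $\Phi^*(x,a)=\{b\in\Phii_{\Bb}(x,a):f^*(x,a)=-\ff(x,a,b)\}=\Phii_{\Bb}^*(x,a)$ in view of the last equality in (\ref{eq:equal}); in particular each $\Phii_{\Bb}^*(x,a)$ is nonempty. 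Claim~(b) follows from Theorem~\ref{th:BGL}(b) upon observing that $f^*(x,a)=+\infty$ is equivalent to $\ff^\sharp(x,a)=-\infty$, in which case $\Phi^*=\Phi$, that is, $\Phii_{\Bb}^*(x,a)=\Phii_{\Bb}(x,a)$, while $f^*(x,a)<+\infty$ is equivalent to $\ff^\sharp(x,a)>-\infty$, in which case $\Phi^*(x,a)=\Phii_{\Bb}^*(x,a)$ is compact. Claim~(a) is the Borel measurability of $\Gr(\Phi^*)=\Gr(\Phii_{\Bb}^*)$ supplied by Theorem~\ref{th:BGL}(a).

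The only step that is not a formal rereading is the ambient-space bookkeeping in claim~(a). Theorem~\ref{th:BGL}(a) naturally yields that $\Gr(\Phii_{\Bb}^*)$ is Borel in $\X\times\Y=\Gr(\Phii_{\Aa})\times\Bb$, whereas the statement asserts measurability in the full product $\Xx\times\Aa\times\Bb$. I would assemble $\Gr(\Phii_{\Bb}^*)$ from the sets $\{(x,a,b)\in\Gr(\Phii_{\Bb}):\ff(x,a,b)\ge-\lambda\}$, which are closed in $\Gr(\Phii_{\Aa})\times\Bb$ by Lemma~\ref{k-inf-compact-strict}(i) applied to the strict mapping $\Phii_{\Bb}$ and the $\K$-inf-compact function $-\ff$, together with level sets of the upper semi-continuous functions $\ff$ and $\ff^\sharp$. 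The delicate part is that closedness in the subspace $\Gr(\Phii_{\Aa})\times\Bb$ does not by itself give measurability in the ambient product, and no measurability of $\Gr(\Phii_{\Aa})$ is assumed here; hence I expect this to be the main obstacle, and the argument either should record the conclusion relative to $\Gr(\Phii_{\Aa})\times\Bb$ or should invoke an additional property of $\Gr(\Phii_{\Aa})$ to lift it to $\Xx\times\Aa\times\Bb$. Every other clause of the theorem is an immediate translation of Theorem~\ref{th:BGL} through the substitution $f=-\ff$.
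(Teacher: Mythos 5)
Your reduction to Theorem~\ref{th:BGL} via the substitution $f=-\ff$ is exactly the paper's strategy, and your derivation of the upper semi-continuity of $\ff^\sharp$, the attainment of the supremum, and property (b) is correct. The gap is where you yourself located it: part (a). By instantiating Theorem~\ref{th:BGL} with the state space $\X:=\Gr(\Phii_{\Aa})$ and the \emph{strict} mapping $\Phi:=\Phii_{\Bb}$, you can only conclude that $\Gr(\Phii_{\Bb}^*)$ is Borel in the subspace $\Gr(\Phii_{\Aa})\times\Bb$. Since Section~\ref{sec:minimax} imposes no measurability assumption whatsoever on $\Gr(\Phii_{\Aa})$, there is no way to lift relative Borel-ness to Borel-ness in $\Xx\times\Aa\times\Bb$, so your argument does not prove (a) as stated.

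The paper closes this hole not by adding a property of $\Gr(\Phii_{\Aa})$, but by a different instantiation: it applies Theorem~\ref{th:BGL} with $\X:=\Xx\times\Aa$, $\Y:=\Bb$, $f:=-\ff$, and $\Phi:=\Phii_{\Bb}$ regarded as a possibly \emph{non-strict} set-valued mapping $\Xx\times\Aa\mapsto 2^{\Bb}$ whose domain is ${\rm Dom\,}\Phii_{\Bb}=\Gr(\Phii_{\Aa})$. This is legitimate because Theorem~\ref{th:BGL} in this paper is formulated precisely for non-strict mappings --- that is the announced purpose of Definition~\ref{def:kinf}, Remark~\ref{rem:2:9}, and Lemma~\ref{k-inf-compact} --- and the hypothesis it requires, namely $\K$-inf-compactness of $-\ff$ on $\Gr(\Phii_{\Bb})$ with respect to the base space $\Xx\times\Aa$, is exactly the stated $\K$-sup-compactness assumption (by Remark~\ref{rem:Dom} it is also equivalent to the version you verified over the base space $\Gr(\Phii_{\Aa})$). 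With this instantiation, conclusion (a) of Theorem~\ref{th:BGL} asserts Borel-ness of the solution graph in the ambient product $\X\times\Y=\Xx\times\Aa\times\Bb$ directly, so no subspace-to-ambient transfer is ever needed. In short, the dichotomy you posed (record the conclusion relative to $\Gr(\Phii_{\Aa})\times\Bb$, or invoke an additional property of $\Gr(\Phii_{\Aa})$) has a third resolution: invoke the non-strict form of Theorem~\ref{th:BGL}, which was engineered to carry the ambient-product Borel conclusion as part of its statement; your instinct that this strengthening is genuinely nontrivial is sound, but within the paper's framework it is already available, and the proof of the present theorem is then a one-line application.
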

\begin{proof}
Since the function $\ff:{\rm Gr}(\Phii_{\Bb})\subset (\Xx \times \Aa)\times\Bb \mapsto \overline{\mathbb{R}}$ is $\K$-sup-compact on $\Gr(\Phii_{\Bb}),$ we have that Theorem~\ref{th:BGL}, being applied to $\X=\Xx\times\Aa,$ $\Y=\Bb,$ $\Phi=\Phii_{\Bb},$ and $f=-\ff,$ implies all the statements of Theorem~\ref{th:wloss_usc}.
\qed \end{proof}

The following theorem describes sufficient conditions for continuity of the worst-loss function $\ff^\sharp:
\Gr(\Phii_{\Aa})\subset
\Xx\times\Aa\mapsto \overline{\mathbb{R}}$ defined in (\ref{eq1starworstloss}), when the image sets $\{\Phii_{\Aa}(x)\}_{x\in\Xx}$ and $\{\Phii_{\Bb}(x,a)\}_{(x,a)\in \Gr(\Phii_{\Aa})}$ can be noncompact.
\begin{theorem}{\rm(Continuity of the worst-loss function)}\label{th:wloss_cont}
Let $\Phii_{\Bb}:\Gr(\Phii_{\Aa})\subset\Xx\times\Aa\mapsto S(\Bb)$ be a lower semi-continuous set-valued mapping,  $\ff:{\rm Gr}(\Phii_{\Bb})\subset (\Xx \times \Aa)\times\Bb \mapsto \overline{\mathbb{R}}$ be a $\K$-sup-compact function on $\Gr(\Phii_{\Bb}),$ and the function $\ff^{\Aa\leftrightarrow \Bb}:\Gr(\Phii_\Bb^{\Aa\leftrightarrow \Bb})\subset (\Xx\times\Bb)\times\Aa\mapsto\R$ defined in (\ref{eq:auxil3}), where $\Phii_\Bb^{\Aa\leftrightarrow \Bb}$ is defined in (\ref{eq:auxil23}), be $\K$-inf-compact on $\Gr(\Phii_\Bb^{\Aa\leftrightarrow \Bb}).$ Then the worst-loss function $\ff^\sharp:
\Gr(\Phii_{\Aa})\subset\Xx\times\Aa\mapsto \overline{\mathbb{R}}$ defined in (\ref{eq1starworstloss}) is continuous.
\end{theorem}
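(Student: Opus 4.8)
The plan is to obtain continuity of $\ff^\sharp$ by establishing lower semi-continuity and upper semi-continuity separately and then combining the two. Both pieces should follow by correctly invoking the results already available in this section, with the appropriate identification of the spaces and mappings; the only mildly subtle point is recognizing which hypothesis supplies which half.

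First I would dispose of upper semi-continuity, which is immediate. By hypothesis the function $\ff:{\rm Gr}(\Phii_{\Bb})\subset (\Xx \times \Aa)\times\Bb \mapsto \overline{\mathbb{R}}$ is $\K$-sup-compact on $\Gr(\Phii_{\Bb})$, so Theorem~\ref{th:wloss_usc} applies directly and yields that the worst-loss function $\ff^\sharp:\Gr(\Phii_{\Aa})\subset\Xx\times\Aa\mapsto \overline{\mathbb{R}}$ defined in (\ref{eq1starworstloss}) is upper semi-continuous (and in fact the supremum there is attained). No further work is needed for this half.

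For lower semi-continuity I would \emph{not} route through the $\K$-inf-compactness Theorem~\ref{th:wloss_kinfcomp}, since that statement requires $\Aa$-lower semi-continuity of $\Phii_{\Bb}$, which is stronger than the plain lower semi-continuity assumed here. Instead I would use Theorem~\ref{cor:wloss_lsc}, whose two hypotheses are lower semi-continuity of $\Phii_{\Bb}$ (assumed) and lower semi-continuity of $\ff$. The latter I would extract from the $\K$-inf-compactness of $\ff^{\Aa\leftrightarrow \Bb}$: by Lemma~\ref{k-inf-compact} (equivalently, condition~(i) of Remark~\ref{rem:substit}), $\K$-inf-compactness of $\ff^{\Aa\leftrightarrow \Bb}$ on $\Gr(\Phii_\Bb^{\Aa\leftrightarrow \Bb})$ forces lower semi-continuity of $\ff^{\Aa\leftrightarrow \Bb}$, and since by (\ref{eq:auxil3}) the function $\ff^{\Aa\leftrightarrow \Bb}(x,b,a)=\ff(x,a,b)$ is merely a relabeling of coordinates, $\ff$ is lower semi-continuous on $\Gr(\Phii_{\Bb})$. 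With both hypotheses of Theorem~\ref{cor:wloss_lsc} verified, $\ff^\sharp$ is lower semi-continuous, and combining this with the upper semi-continuity from the previous paragraph delivers continuity.

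I do not expect a genuine technical obstacle: the argument is a clean assembly of Theorems~\ref{th:wloss_usc} and~\ref{cor:wloss_lsc}. The one thing to get right, and the place where care is needed, is the realization that here the $\K$-inf-compactness hypothesis on $\ff^{\Aa\leftrightarrow \Bb}$ is used \emph{only} to produce lower semi-continuity of $\ff$, and that the coordinate-permutation between $\ff^{\Aa\leftrightarrow \Bb}$ and $\ff$ preserves lower semi-continuity (since lower semi-continuity is a property of the values along convergent sequences and is invariant under the bijection $(x,a,b)\leftrightarrow(x,b,a)$ implemented by (\ref{eq:auxil23})). The stronger $\Aa$-lower semi-continuity needed in Theorem~\ref{th:wloss_kinfcomp} is therefore superfluous for the continuity conclusion, because lower semi-continuity of $\ff^\sharp$ already follows from the maximum-theorem-type result under plain lower semi-continuity of $\Phii_{\Bb}$.
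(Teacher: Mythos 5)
Your proposal is correct and follows essentially the same route as the paper: the paper's proof likewise obtains lower semi-continuity of $\ff^\sharp$ from Theorem~\ref{cor:wloss_lsc} and upper semi-continuity from Theorem~\ref{th:wloss_usc}, then combines the two. The only difference is that you make explicit the step the paper leaves implicit, namely that the $\K$-inf-compactness of $\ff^{\Aa\leftrightarrow \Bb}$ yields lower semi-continuity of $\ff$ via Remark~\ref{rem:substit}(i) and the coordinate relabeling in (\ref{eq:auxil3}), which is exactly the right justification.
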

\begin{proof}
Theorem~\ref{cor:wloss_lsc} implies that the worst-loss function $\ff^\sharp:
\Gr(\Phii_{\Aa})\subset\Xx\times\Aa\mapsto \overline{\mathbb{R}}$ is
 lower semi-continuous. Theorem~\ref{th:wloss_usc} implies that $\ff^\sharp:
\Gr(\Phii_{\Aa})\subset\Xx\times\Aa\mapsto \overline{\mathbb{R}}$ is upper semi-continuous. Therefore, $\ff^\sharp:
\Gr(\Phii_{\Aa})\subset\Xx\times\Aa\mapsto \overline{\mathbb{R}}$  is continuous.
\qed \end{proof}

The following theorem describes sufficient conditions for lower semi-continuity of the minimax function $\vv^\sharp$ defined in (\ref{eq1starminimax}) and basic properties for the solution multifunction $\Phii_{\Aa}^*$ defined in (\ref{e:defFi*minimax1}), when the image sets $\{\Phii_{\Aa}(x)\}_{x\in\Xx}$ and $\{\Phii_{\Bb}(x,a)\}_{(x,a)\in \Gr(\Phii_{\Aa})}$ can be noncompact.

\begin{theorem}{\rm(Lower semi-continuity of minimax)}\label{th:minimax_lsc}
Let $\Phii_{\Bb}:\Gr(\Phii_{\Aa})\subset\Xx\times\Aa\mapsto S(\Bb)$ be an   $\Aa$-lower semi-continuous set-valued mapping and the function $\ff^{\Aa\leftrightarrow \Bb}:\Gr(\Phii_\Bb^{\Aa\leftrightarrow \Bb})\subset (\Xx\times\Bb)\times\Aa\mapsto\R$ defined in (\ref{eq:auxil3}), where $\Phii_\Bb^{\Aa\leftrightarrow \Bb}$ is defined in (\ref{eq:auxil23}), be $\K$-inf-compact on $\Gr(\Phii_\Bb^{\Aa\leftrightarrow \Bb}).$ Then the minimax function $\vv^\sharp:
\Xx\mapsto \overline{\mathbb{R}}$ defined in (\ref{eq1starminimax}) is lower semi-continuous.
Moreover, the infimum in (\ref{eq1starminimax}) can be
replaced with the minimum and the nonempty sets $\{\Phii_{\Aa}^*(x)\}_{x\in\Xx}$ defined in (\ref{e:defFi*minimax1})
satisfy the following properties:
\begin{itemize}
\item[{\rm(a)}] the graph ${\rm Gr}(\Phii_{\Aa}^*)$ is a Borel subset of  $\Xx\times \Aa;$
\item[{\rm(b)}] if $\vv^\sharp(x)=+\infty,$
then $\Phii_{\Aa}^*(x)=\Phii_{\Aa}(x),$ and, if $\vv^\sharp(x)<+\infty,$ then
$\Phii_{\Aa}^*(x)$ is compact.
\end{itemize}
\end{theorem}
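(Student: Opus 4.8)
The plan is to collapse the nested minimax into a single outer minimization and then apply the results of Section~\ref{s2}. The decisive observation is the first equality in (\ref{eq:equal}), namely $\vv^\sharp(x)=\inf_{a\in \Phii_{\Aa}(x)}\ff^\sharp(x,a)$, which exhibits $\vv^\sharp$ as the value function (\ref{eq1star}) associated with the data $(\X,\Y,\Phi,f)$ replaced by $(\Xx,\Aa,\Phii_{\Aa},\ff^\sharp)$. Under the same identification, the solution multifunction $\Phi^*$ of (\ref{e:defFi*}) becomes exactly the set $\{a\in \Phii_{\Aa}(x)\,:\,\vv^\sharp(x)=\ff^\sharp(x,a)\}$, which is $\Phii_{\Aa}^*$ by the second relation in (\ref{eq:equal}). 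Thus the whole statement will follow once $\ff^\sharp$ is known to be $\K$-inf-compact on $\Gr(\Phii_{\Aa})$.

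First I would invoke Theorem~\ref{th:wloss_kinfcomp}. Its two hypotheses are precisely the two hypotheses of the present theorem: that $\Phii_{\Bb}$ is $\Aa$-lower semi-continuous and that $\ff^{\Aa\leftrightarrow \Bb}$ is $\K$-inf-compact on $\Gr(\Phii_\Bb^{\Aa\leftrightarrow \Bb})$. Hence Theorem~\ref{th:wloss_kinfcomp} delivers directly that the worst-loss function $\ff^\sharp:\Gr(\Phii_{\Aa})\subset\Xx\times\Aa\mapsto \overline{\mathbb{R}}$ is $\K$-inf-compact on $\Gr(\Phii_{\Aa})$. This is the step that carries all the analytic content, and it is where the $\Aa$-lower semi-continuity assumption is consumed. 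Second, I would apply Theorem~\ref{th:BGL} to the minimization problem identified above, with $\X:=\Xx,$ $\Y:=\Aa,$ $\Phi:=\Phii_{\Aa},$ and $f:=\ff^\sharp.$ Because $\ff^\sharp$ is $\K$-inf-compact on $\Gr(\Phii_{\Aa})$, Theorem~\ref{th:BGL} yields simultaneously that the value function $\vv^\sharp$ is lower semi-continuous, that the infimum in (\ref{eq1starminimax}) is attained and may be replaced by the minimum with nonempty sets $\Phii_{\Aa}^*(x)$, and that $\Phii_{\Aa}^*$ satisfies the Borel-graph property~(a) and the dichotomy~(b) of the statement, these being exactly properties~(a) and~(b) of Theorem~\ref{th:BGL} read through the identifications $f^*=\vv^\sharp$, $\Phi^*=\Phii_{\Aa}^*$, and $\Phi=\Phii_{\Aa}$.

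I expect no genuine obstacle in this argument, since the difficult regularity has been isolated in Theorem~\ref{th:wloss_kinfcomp}; what remains is essentially bookkeeping. The only point that deserves explicit care is the verification that the objects produced by Theorem~\ref{th:BGL} coincide with those named in the present statement: that the solution multifunction of the outer problem is literally $\Phii_{\Aa}^*$ via (\ref{eq:equal}), and that the two cases $\vv^\sharp(x)=+\infty$ and $\vv^\sharp(x)<+\infty$ in~(b) correspond to the cases $f^*(x)=+\infty$ and $f^*(x)<+\infty$ of Theorem~\ref{th:BGL}. Once this correspondence is recorded, the proof reduces to citing Theorems~\ref{th:wloss_kinfcomp} and~\ref{th:BGL} in sequence.
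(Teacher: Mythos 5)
Your proposal is correct and follows exactly the paper's own argument: Theorem~\ref{th:wloss_kinfcomp} yields $\K$-inf-compactness of $\ff^\sharp$ on $\Gr(\Phii_{\Aa})$, after which Theorem~\ref{th:BGL}, applied with $\X:=\Xx$, $\Y:=\Aa$, $\Phi:=\Phii_{\Aa}$, and $f:=\ff^\sharp$, delivers all the conclusions via the identifications in (\ref{eq:equal}). The paper's proof is precisely this two-step citation, so there is nothing to add.
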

\begin{proof}
Theorem~\ref{th:wloss_kinfcomp} implies that the worst-loss function $\ff^\sharp:
\Gr(\Phii_{\Aa})\subset\Xx\times\Aa\mapsto \overline{\mathbb{R}}$ defined in (\ref{eq1starworstloss}) is $\K$-inf-compact on $\Gr(\Phii_{\Aa}).$ Therefore, Theorem~\ref{th:BGL}, being applied to $\X:=\Xx,$ $\Y:=\Aa,$ $\Phi:=\Phii_{\Aa},$ and $f:=\ff^\sharp,$ implies all the statements of Theorem~\ref{th:minimax_lsc}.
\qed \end{proof}

The following theorem describes sufficient conditions for upper semi-continuity of the minimax function $\vv^\sharp$ defined in (\ref{eq1starminimax}) and basic properties for the solution multifunction $\Phii_{\Bb}^*$ defined in (\ref{e:defFi*minimax2}), when the image sets $\{\Phii_{\Aa}(x)\}_{x\in\Xx}$ and $\{\Phii_{\Bb}(x,a)\}_{(x,a)\in \Gr(\Phii_{\Aa})}$ can be noncompact.
\begin{theorem}{\rm(Upper semi-continuity of minimax)}\label{th:minimax_usc}
Let $\Phii_{\Aa}:\Xx\mapsto S(\Aa)$ be a lower semi-continuous set-valued mapping and $\ff:{\rm Gr}(\Phii_{\Bb})\subset (\Xx \times \Aa)\times\Bb \mapsto \overline{\mathbb{R}}$ be a $\K$-sup-compact function on $\Gr(\Phii_{\Bb}).$ Then the minimax function $\vv^\sharp:
\Xx\mapsto \overline{\mathbb{R}}$ defined in (\ref{eq1starminimax}) is upper semi-continuous. Moreover, the supremums in (\ref{eq1starworstloss}) and (\ref{eq1starminimax}) can be replaced with the maximums and the nonempty sets $\{\Phii_{\Bb}^*(x,a)\}_{(x,a)\in\Gr(\Phii_{\Aa})}$ defined in (\ref{e:defFi*minimax2}) (see also the last equality in (\ref{eq:equal}))
satisfy properties (a) and (b) of Theorem~\ref{th:wloss_usc}.
\end{theorem}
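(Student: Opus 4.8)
The plan is to deduce the theorem from the two previously established results, Theorem~\ref{th:wloss_usc} (which handles the inner maximization via the worst-loss function) and Theorem~\ref{th:BGU} (which handles the outer minimization), so that the proof is essentially their composition.

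First I would note that the $\K$-sup-compactness of $\ff$ on $\Gr(\Phii_{\Bb})$ is exactly the hypothesis of Theorem~\ref{th:wloss_usc}. Invoking that theorem yields at once that the worst-loss function $\ff^\sharp:\Gr(\Phii_{\Aa})\subset\Xx\times\Aa\mapsto\overline{\mathbb{R}}$ is upper semi-continuous, that the supremum in (\ref{eq1starworstloss}) can be replaced with the maximum, and that the nonempty sets $\{\Phii_{\Bb}^*(x,a)\}_{(x,a)\in\Gr(\Phii_{\Aa})}$ satisfy properties (a) and (b). Since the inner supremum appearing in (\ref{eq1starminimax}) is precisely $\ff^\sharp(x,a)$, the replacement of that supremum by a maximum is the same assertion; hence this step already disposes of every claim in the theorem except the upper semi-continuity of $\vv^\sharp$.

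It remains to establish the upper semi-continuity of $\vv^\sharp$. Using the first equality in (\ref{eq:equal}), I would rewrite $\vv^\sharp(x)=\inf_{a\in\Phii_{\Aa}(x)}\ff^\sharp(x,a)$, which exhibits $\vv^\sharp$ as the value function of the minimization of $\ff^\sharp$ over the feasible mapping $\Phii_{\Aa}$. The hypothesis gives that $\Phii_{\Aa}:\Xx\mapsto S(\Aa)$ is lower semi-continuous and strict, and the previous step gives that $\ff^\sharp$ is upper semi-continuous; thus Theorem~\ref{th:BGU}, applied with $\X:=\Xx$, $\Y:=\Aa$, $\Phi:=\Phii_{\Aa}$, and $f:=\ff^\sharp$, yields that $\vv^\sharp$ is upper semi-continuous.

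Since the argument is a direct chaining of two cited results, there is no genuine obstacle; the only point meriting attention is the ordering of the steps, namely that the upper semi-continuity of $\ff^\sharp$ must be secured (via the $\K$-sup-compactness hypothesis and Theorem~\ref{th:wloss_usc}) before it can be supplied as the objective function to Theorem~\ref{th:BGU}.
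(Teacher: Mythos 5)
Your proposal is correct and follows essentially the same route as the paper's own proof: first invoke Theorem~\ref{th:wloss_usc} (using the $\K$-sup-compactness of $\ff$) to obtain upper semi-continuity of $\ff^\sharp$, the attainment of the suprema, and properties (a) and (b), and then apply Theorem~\ref{th:BGU} with $\X:=\Xx$, $\Y:=\Aa$, $\Phi:=\Phii_{\Aa}$, $f:=\ff^\sharp$ to get upper semi-continuity of $\vv^\sharp$. The ordering of the two steps and the identification of the inner supremum in (\ref{eq1starminimax}) with $\ff^\sharp$ match the paper exactly.
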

\begin{proof}
Theorem~\ref{th:wloss_usc} implies that the worst-loss function $\ff^\sharp:
\Gr(\Phii_{\Aa})\subset\Xx\times\Aa\mapsto \overline{\mathbb{R}}$ defined in (\ref{eq1starworstloss}) is upper semi-continuous on $\Gr(\Phii_{\Aa}),$  the supremums in (\ref{eq1starworstloss}) and (\ref{eq1starminimax}) can be replaced with the maximums, and the nonempty sets $\{\Phii_{\Bb}^*(x,a)\}_{(x,a)\in\Gr(\Phii_{\Aa})}$ defined in (\ref{e:defFi*minimax2}) (see also the last equality in (\ref{eq:equal}))
satisfy properties (a) and (b) of Theorem~\ref{th:wloss_usc}. The upper semi-continuity of the  minimax function $\vv^\sharp:
\Xx\mapsto \overline{\mathbb{R}}$ follows from Theorem~\ref{th:BGU}, being applied to $\X:=\Xx,$ $\Y:=\Aa,$ $\Phi:=\Phii_{\Aa},$ and $f:=\ff^\sharp,$ because a set-valued mapping $\Phii_{\Aa}:\Xx\mapsto S(\Aa)$ is lower semi-continuous and the function $\ff^\sharp:
\Gr(\Phii_{\Aa})\subset\Xx\times\Aa\mapsto \overline{\mathbb{R}}$ is upper semi-continuous.
\qed \end{proof}

The following theorem describes sufficient conditions for continuity of the minimax function $\vv^\sharp$ defined in (\ref{eq1starminimax}), when the image sets $\{\Phii_{\Aa}(x)\}_{x\in\Xx}$ and $\{\Phii_{\Bb}(x,a)\}_{(x,a)\in \Gr(\Phii_{\Aa})}$ can be noncompact.

\begin{theorem}{\rm(Continuity of minimax)}\label{th:minimax_cont}
Let $\Phii_{\Aa}:\Xx\mapsto S(\Aa)$ be a lower semi-continuous set-valued mapping, $\Phii_{\Bb}:\Gr(\Phii_{\Aa})\subset\Xx\times\Aa\mapsto S(\Bb)$ be an   $\Aa$-lower semi-continuous set-valued mapping, $\ff:{\rm Gr}(\Phii_{\Bb})\subset (\Xx \times \Aa)\times\Bb \mapsto \overline{\mathbb{R}}$ be a $\K$-sup-compact function on $\Gr(\Phii_{\Bb}),$ and the function $\ff^{\Aa\leftrightarrow \Bb}:\Gr(\Phii_\Bb^{\Aa\leftrightarrow \Bb})\subset (\Xx\times\Bb)\times\Aa\mapsto\R$ defined in (\ref{eq:auxil3}), where $\Phii_\Bb^{\Aa\leftrightarrow \Bb}$ is defined in (\ref{eq:auxil23}), be $\K$-inf-compact on $\Gr(\Phii_\Bb^{\Aa\leftrightarrow \Bb}).$ Then the minimax function $\vv^\sharp:
\Xx\mapsto \overline{\mathbb{R}}$ defined in (\ref{eq1starminimax}) is continuous.
\end{theorem}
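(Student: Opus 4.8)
The plan is to obtain continuity of $\vv^\sharp$ by establishing its lower and upper semi-continuity separately, each of which is already available as a theorem proved earlier in this section. First I would apply Theorem~\ref{th:minimax_lsc} (Lower semi-continuity of minimax): its two hypotheses are precisely that $\Phii_{\Bb}:\Gr(\Phii_{\Aa})\subset\Xx\times\Aa\mapsto S(\Bb)$ is $\Aa$-lower semi-continuous and that the function $\ff^{\Aa\leftrightarrow\Bb}$ is $\K$-inf-compact on $\Gr(\Phii_\Bb^{\Aa\leftrightarrow\Bb})$. Both are among the assumptions of the present theorem, so Theorem~\ref{th:minimax_lsc} yields that the minimax function $\vv^\sharp:\Xx\mapsto\overline{\mathbb{R}}$ defined in~(\ref{eq1starminimax}) is lower semi-continuous.

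Second I would apply Theorem~\ref{th:minimax_usc} (Upper semi-continuity of minimax): its two hypotheses are that $\Phii_{\Aa}:\Xx\mapsto S(\Aa)$ is lower semi-continuous and that $\ff:\Gr(\Phii_{\Bb})\subset(\Xx\times\Aa)\times\Bb\mapsto\overline{\mathbb{R}}$ is $\K$-sup-compact on $\Gr(\Phii_{\Bb})$. These are exactly the remaining two assumptions of the present theorem, so Theorem~\ref{th:minimax_usc} gives that $\vv^\sharp$ is upper semi-continuous. A function that is both lower and upper semi-continuous is continuous, which is the desired conclusion. This is the same two-step pattern used in the proof of Theorem~\ref{th:wloss_cont}, where continuity of the worst-loss function $\ff^\sharp$ was deduced by combining Theorem~\ref{cor:wloss_lsc} with Theorem~\ref{th:wloss_usc}.

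There is no substantive obstacle; the only point demanding care is the bookkeeping of hypotheses, namely verifying that the four assumptions of Theorem~\ref{th:minimax_cont} partition cleanly into the pair feeding the lower semi-continuity half and the pair feeding the upper semi-continuity half, with nothing left over and nothing missing. Here it is worth noting that the two semi-continuity halves draw on different mappings: the $\Aa$-lower semi-continuity of the \emph{inner} mapping $\Phii_{\Bb}$ (which, as explained after Definition~\ref{defi:uniformAlsc}, is strictly stronger than ordinary lower semi-continuity) is consumed entirely by the lower semi-continuity argument, while the plain lower semi-continuity of the \emph{outer} mapping $\Phii_{\Aa}$ suffices for the upper semi-continuity argument; likewise the $\K$-inf-compactness of $\ff^{\Aa\leftrightarrow\Bb}$ is used only for lower semi-continuity and the $\K$-sup-compactness of $\ff$ only for upper semi-continuity. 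Since the partition is exact, the proof reduces to invoking the two theorems in succession.
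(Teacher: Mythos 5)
Your proposal is correct and coincides with the paper's own proof: the paper also deduces continuity of $\vv^\sharp$ by invoking Theorem~\ref{th:minimax_lsc} for lower semi-continuity and Theorem~\ref{th:minimax_usc} for upper semi-continuity, with the four hypotheses partitioning exactly as you describe. Your accompanying remarks on which hypothesis feeds which half are accurate bookkeeping and add nothing that conflicts with the paper's argument.
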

\begin{proof}
Theorem~\ref{th:minimax_lsc} implies that the function $\vv^\sharp:
\Xx\mapsto \overline{\mathbb{R}}$ is lower semi-continuous. Theorem~\ref{th:minimax_usc} implies that the function $\vv^\sharp:
\Xx\mapsto \overline{\mathbb{R}}$ is upper semi-continuous. Thus, the function $\vv^\sharp:
\Xx\mapsto \overline{\mathbb{R}}$ is continuous.
\qed \end{proof}

The following theorem describes sufficient conditions for  
$\K$-upper semi-compactness of the solution multifunction $\Phii_{\Aa}^*$  defined in (\ref{e:defFi*minimax1}), when the image sets $\{\Phii_{\Aa}(x)\}_{x\in\Xx}$ and $\{\Phii_{\Bb}(x,a)\}_{(x,a)\in \Gr(\Phii_{\Aa})}$ can be noncompact.
\begin{theorem}{\rm(Continuity properties for solution multifunction $\Phii_{\Aa}^*$)}\label{th: uscphii1}
Let $\vv^\sharp:
\Xx\mapsto \mathbb{R}\cup\{-\infty\}$ defined in (\ref{eq1starminimax}) be a continuous function, $\Phii_{\Bb}:\Gr(\Phii_{\Aa})\subset\Xx\times\Aa\mapsto S(\Bb)$ be an   $\Aa$-lower semi-continuous set-valued mapping, and the function $\ff^{\Aa\leftrightarrow \Bb}:\Gr(\Phii_\Bb^{\Aa\leftrightarrow \Bb})\subset (\Xx\times\Bb)\times\Aa\mapsto\R$ defined in (\ref{eq:auxil3}), where $\Phii_\Bb^{\Aa\leftrightarrow \Bb}$ is defined in (\ref{eq:auxil23}), be $\K$-inf-compact on $\Gr(\Phii_\Bb^{\Aa\leftrightarrow \Bb}).$ Then the infimum in (\ref{eq1starminimax}) can be
replaced with the minimum and the solution multifunction $\Phii_{\Aa}^*:\Xx\mapsto S(\Aa)$ defined in (\ref{e:defFi*minimax1}) is upper semi-continuous and compact-valued.
\end{theorem}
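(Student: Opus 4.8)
The plan is to recognize that $\vv^\sharp$ is nothing but the value function of a single minimization problem whose integrand is the worst-loss function $\ff^\sharp$, and then to invoke the optimization results of Section~\ref{s2} directly. Concretely, I would read the outer minimization in (\ref{eq1starminimax}) through the first equality in (\ref{eq:equal}), namely $\vv^\sharp(x)=\inf_{a\in\Phii_\Aa(x)}\ff^\sharp(x,a)$, so that $\vv^\sharp$ plays the role of the value function $f^*$ from (\ref{eq1star}) under the substitutions $\X:=\Xx$, $\Y:=\Aa$, $\Phi:=\Phii_\Aa$, and $f:=\ff^\sharp$, while $\Phii_\Aa^*$ plays the role of the solution multifunction $\Phi^*$ from (\ref{e:defFi*}).

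First I would establish that $\ff^\sharp:\Gr(\Phii_\Aa)\subset\Xx\times\Aa\mapsto\R$ is $\K$-inf-compact on $\Gr(\Phii_\Aa)$. This is exactly the conclusion of Theorem~\ref{th:wloss_kinfcomp}, whose two hypotheses---that $\Phii_\Bb$ is $\Aa$-lower semi-continuous and that $\ff^{\Aa\leftrightarrow\Bb}$ is $\K$-inf-compact on $\Gr(\Phii_\Bb^{\Aa\leftrightarrow\Bb})$---are precisely two of the three assumptions of the present theorem. Thus the sole role of those two assumptions is to supply the $\K$-inf-compactness of $\ff^\sharp$.

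Next, with $\ff^\sharp$ now known to be $\K$-inf-compact, I would apply Theorem~\ref{th:usc} under the substitutions listed above. The strictness requirement $\Phii_\Aa:\Xx\mapsto S(\Aa)$ holds by hypothesis; the $\K$-inf-compactness of $f=\ff^\sharp$ is the previous step; and the required continuity of the value function $f^*=\vv^\sharp$, with range in $\mathbb{R}\cup\{-\infty\}$, is the remaining assumption of the present theorem. Theorem~\ref{th:usc} then yields two conclusions simultaneously: the infimum in (\ref{eq1starminimax}) is attained and may be replaced by the minimum, and the solution multifunction $\Phii_\Aa^*:\Xx\mapsto S(\Aa)$ is $\K$-upper semi-compact.

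Finally I would translate $\K$-upper semi-compactness into the topological language of the statement via Lemma~\ref{k-upper semi-comp}, which asserts that a set-valued mapping is $\K$-upper semi-compact if and only if it is upper semi-continuous and compact-valued at each point of its domain. Since the minimum is attained, $\Phii_\Aa^*(x)$ is nonempty for every $x$, so ${\rm Dom\,}\Phii_\Aa^*=\Xx$, and Lemma~\ref{k-upper semi-comp} delivers upper semi-continuity and compact-valuedness of $\Phii_\Aa^*$ on all of $\Xx$, completing the proof. I expect no genuine obstacle here: all of the analytic difficulty has already been absorbed into Theorem~\ref{th:wloss_kinfcomp} (and hence into the $\Aa$-lower semi-continuity hypothesis, which is what makes the worst-loss function $\K$-inf-compact rather than merely lower semi-continuous), so the remaining work is the bookkeeping of verifying that the hypotheses of Theorem~\ref{th:usc} are met under the stated substitutions.
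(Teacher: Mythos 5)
Your proposal is correct and follows essentially the same route as the paper's own proof: Theorem~\ref{th:wloss_kinfcomp} gives $\K$-inf-compactness of $\ff^\sharp$ on $\Gr(\Phii_{\Aa})$, and Theorem~\ref{th:usc} applied with $\X:=\Xx$, $\Y:=\Aa$, $\Phi:=\Phii_{\Aa}$, $f:=\ff^\sharp$ yields both conclusions. Your explicit final appeal to Lemma~\ref{k-upper semi-comp} to convert $\K$-upper semi-compactness into upper semi-continuity plus compact-valuedness is a bookkeeping step the paper leaves implicit, not a different argument.
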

\begin{proof}
Theorem~\ref{th:wloss_kinfcomp} implies that the worst-loss function $\ff^\sharp:
\Gr(\Phii_{\Aa})\subset\Xx\times\Aa\mapsto \overline{\mathbb{R}}$ defined in (\ref{eq1starworstloss}) is $\K$-inf-compact on $\Gr(\Phii_{\Aa}).$ Since $\vv^\sharp:
\Xx\mapsto \mathbb{R}\cup\{-\infty\}$ defined in (\ref{eq1starminimax}) is a continuous function, we have that  Theorem~\ref{th:usc}, being applied to $\X:=\Xx,$ $\Y:=\Aa,$ $\Phi:=\Phii_{\Aa},$ and $f:=\ff^\sharp,$ implies that the infimum in (\ref{eq1starminimax}) can be
replaced with the minimum and the solution multifunction $\Phii_{\Aa}^*:\Xx\mapsto S(\Aa)$ defined in (\ref{e:defFi*minimax1}) is upper semi-continuous and compact-valued.
\qed \end{proof}

The following theorem provides sufficient conditions for $\K$-upper semi-compactness of the solution multifunction $\Phii_{\Bb}^*$ defined in (\ref{e:defFi*minimax2}), when the image sets $\{\Phii_{\Aa}(x)\}_{x\in\Xx}$ and $\{\Phii_{\Bb}(x,a)\}_{(x,a)\in \Gr(\Phii_{\Aa})}$ can be noncompact.
\begin{theorem}{\rm(Continuity properties of the solution multifunction $\Phii_{\Bb}^*$)}\label{th: uscphii2}
Let $\ff^\sharp:
\Gr(\Phii_{\Aa})\subset\Xx\times\Aa\mapsto {\mathbb{R}}\cup\{+\infty\}$ defined in (\ref{eq1starworstloss}) be a continuous function on $\Gr(\Phii_{\Aa})$ and  $\ff:{\rm Gr}(\Phii_{\Bb})\subset (\Xx \times \Aa)\times\Bb \mapsto \overline{\mathbb{R}}$ be a $\K$-sup-compact function on $\Gr(\Phii_{\Bb}).$ Then the supremums in (\ref{eq1starworstloss}) and (\ref{eq1starminimax}) can be replaced with the maximums and the solution multifunction $\Phii_{\Bb}^*:\Gr(\Phii_{\Aa})\subset\Xx\times\Aa\mapsto S(\Bb)$ defined in (\ref{e:defFi*minimax2}) is upper semi-continuous and compact-valued.
\end{theorem}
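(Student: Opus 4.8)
The plan is to dualize the inner maximization into a minimization by passing from $\ff$ to $-\ff$, apply the minimization result Theorem~\ref{th:usc} on the state space $\Gr(\Phii_{\Aa})$, and then convert $\K$-upper semi-compactness of the resulting solution multifunction into upper semi-continuity and compact-valuedness via Lemma~\ref{k-upper semi-comp}.

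First I would set $\X:=\Gr(\Phii_{\Aa})$, regarded as a metric space with the metric inherited from $\Xx\times\Aa$, together with $\Y:=\Bb$, $\Phi:=\Phii_{\Bb}$, and $f:=-\ff$. With these choices $\Phi=\Phii_{\Bb}:\Gr(\Phii_{\Aa})\mapsto S(\Bb)$ is strict, $\Gr(\Phi)=\Gr(\Phii_{\Bb})$, and for every $(x,a)\in\Gr(\Phii_{\Aa})$
\[
f^*(x,a)=\inf_{b\in\Phii_{\Bb}(x,a)}\bigl(-\ff(x,a,b)\bigr)=-\sup_{b\in\Phii_{\Bb}(x,a)}\ff(x,a,b)=-\ff^\sharp(x,a),
\]
so that the associated value function equals $-\ff^\sharp$ and the associated solution multifunction from (\ref{e:defFi*}) coincides with $\Phii_{\Bb}^*$; cf. the last equality in (\ref{eq:equal}). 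Taking the graph $\Gr(\Phii_{\Aa})$ itself as the state space is what makes the possibly non-strict mapping $\Phii_{\Bb}$ strict, exactly as in the proofs of Lemma~\ref{k-inf-compact} and Theorem~\ref{th:BGL}.

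Next I would verify the hypotheses of Theorem~\ref{th:usc}. Since $\ff$ is $\K$-sup-compact on $\Gr(\Phii_{\Bb})$, Definition~\ref{def:ksup} gives that $f=-\ff$ is $\K$-inf-compact on $\Gr(\Phi)$; and since $\ff^\sharp:\Gr(\Phii_{\Aa})\mapsto\mathbb{R}\cup\{+\infty\}$ is continuous by assumption, the value function $-\ff^\sharp:\X\mapsto\mathbb{R}\cup\{-\infty\}$ is continuous with exactly the range required. Theorem~\ref{th:usc} then yields that the infimum defining $f^*$ is attained, which is precisely the assertion that the supremum in (\ref{eq1starworstloss}) (hence also the inner supremum in (\ref{eq1starminimax})) can be replaced by the maximum, and that $\Phi^*=\Phii_{\Bb}^*:\Gr(\Phii_{\Aa})\mapsto S(\Bb)$ is $\K$-upper semi-compact. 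Finally, Lemma~\ref{k-upper semi-comp} applied to $\Phii_{\Bb}^*$ converts $\K$-upper semi-compactness into upper semi-continuity together with compact-valuedness, completing the argument.

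I do not anticipate a genuine obstacle: the whole proof is the routine dualization $\ff\mapsto-\ff$ reducing the maximization to the minimization framework of Theorem~\ref{th:usc}. The only points needing care are recording that the dual value function has range $\mathbb{R}\cup\{-\infty\}$ (never $+\infty$), which is guaranteed by the hypothesis that $\ff^\sharp$ takes values in $\mathbb{R}\cup\{+\infty\}$, and the bookkeeping that makes $\Phii_{\Bb}$ strict by working over $\Gr(\Phii_{\Aa})$.
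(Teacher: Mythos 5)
Your proposal is correct and follows essentially the same route as the paper: the paper's proof likewise applies Theorem~\ref{th:usc} with $\X:=\Gr(\Phii_{\Aa}),$ $\Y:=\Bb,$ $\Phi:=\Phii_{\Bb},$ and $f:=-\ff$ (invoking Remark~\ref{rem:comp} for the subspace bookkeeping). Your write-up merely makes explicit the details the paper leaves implicit, namely the identification $f^*=-\ff^\sharp,$ the verification that the value function has range $\mathbb{R}\cup\{-\infty\},$ and the passage from $\K$-upper semi-compactness to upper semi-continuity and compact-valuedness via Lemma~\ref{k-upper semi-comp}.
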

\begin{proof}
According to Remark~\ref{rem:comp}, the statements of the theortem follow from Theorem~\ref{th:usc}, being applied to $\X:=\Gr(\Phii_{\Aa}),$ $\Y:=\Bb,$ $\Phi:=\Phii_{\Bb},$ and $f:=-\ff.$
\qed \end{proof}

For metric spaces the following theorem can be viewed as an extension of Berge's maximum theorem for noncompact image sets from Feinberg et al. \cite[Theorem~1.4]{FKV} to the minimax formulation.
\begin{theorem}{\rm(Continuity of the worst-loss function $\ff^\sharp$ and the minimax function $\vv^\sharp$ and upper semi-continuity of the solution multifunctions $\Phii_{\Aa}^*$ and $\Phii_{\Bb}^*$)}\label{th: BergeMinimax}
Let $\Phii_{\Aa}:\Xx\mapsto S(\Aa)$ be a lower semi-continuous set-valued mapping,  $\Phii_{\Bb}:\Gr(\Phii_{\Aa})\subset\Xx\times\Aa\mapsto S(\Bb)$ be an   $\Aa$-lower semi-continuous set-valued mapping, $\ff:{\rm Gr}(\Phii_{\Bb})\subset (\Xx \times \Aa)\times\Bb \mapsto {\mathbb{R}}$ be a $\K$-sup-compact function on $\Gr(\Phii_{\Bb}),$ and the function $\ff^{\Aa\leftrightarrow \Bb}:\Gr(\Phii_\Bb^{\Aa\leftrightarrow \Bb})\subset (\Xx\times\Bb)\times\Aa\mapsto\mathbb{R}$ defined in (\ref{eq:auxil3}), where $\Phii_\Bb^{\Aa\leftrightarrow \Bb}$ is defined in (\ref{eq:auxil23}), be $\K$-inf-compact on $\Gr(\Phii_\Bb^{\Aa\leftrightarrow \Bb}).$ Then the worst-loss function $\ff^\sharp:
\Gr(\Phii_{\Aa})\subset\Xx\times\Aa\mapsto {\mathbb{R}}$ defined in (\ref{eq1starworstloss}) is continuous and the minimax function $\vv^\sharp:
\Xx\mapsto {\mathbb{R}}$ defined in (\ref{eq1starminimax}) is continuous. Moreover, the following two properties hold:
\begin{itemize}
\item[{\rm(a)}] the infimum in (\ref{eq1starminimax}) can be
replaced with the minimum, and the solution multifunction $\Phii_{\Aa}^*:\Xx\mapsto S(\Aa)$ defined in (\ref{e:defFi*minimax1}) is upper semi-continuous and compact-valued;
\item[{\rm(b)}] the supremums in (\ref{eq1starworstloss}) and (\ref{eq1starminimax}) can be replaced with the maximums, and the solution multifunction $\Phii_{\Bb}^*:\Gr(\Phii_{\Aa})\subset\Xx\times\Aa\mapsto S(\Bb)$ defined in (\ref{e:defFi*minimax2}) is upper semi-continuous and compact-valued.
\end{itemize}
\end{theorem}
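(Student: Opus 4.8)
The plan is to assemble the statement from the continuity, attainment, and semicompactness results already established in this section, checking at each step that the hypotheses transfer and that the relevant extrema are finite. I would begin by recording that, since $\Phii_{\Bb}$ is $\Aa$-lower semi-continuous, it is in particular lower semi-continuous (the implication noted after Definition~\ref{defi:uniformAlsc}). Thus all three hypotheses of Theorem~\ref{th:wloss_cont} hold, and the worst-loss function $\ff^\sharp$ is continuous on $\Gr(\Phii_{\Aa})$. To upgrade its range from $\overline{\mathbb{R}}$ to $\mathbb{R}$, I would invoke Theorem~\ref{th:wloss_usc}: since $\ff$ is $\K$-sup-compact, the supremum in (\ref{eq1starworstloss}) is attained at some $b\in\Phii_{\Bb}(x,a)$, and as $\ff$ is real-valued this forces $\ff^\sharp(x,a)\in\mathbb{R}$.

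Next I would establish continuity of the minimax. The four hypotheses of Theorem~\ref{th:minimax_cont} coincide with those assumed here, so $\vv^\sharp$ is continuous on $\Xx$. For finiteness I would use Theorem~\ref{th:minimax_lsc} (which relies on the $\K$-inf-compactness of $\ff^\sharp$ supplied by Theorem~\ref{th:wloss_kinfcomp}): the infimum in (\ref{eq1starminimax}) is attained at some $a\in\Phii_{\Aa}(x)$, and since $\ff^\sharp$ is already real-valued, so is $\vv^\sharp:\Xx\mapsto\mathbb{R}$.

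With these two functions continuous and real-valued, properties (a) and (b) follow by direct citation. For (a) I would apply Theorem~\ref{th: uscphii1}, whose hypotheses are now met: $\vv^\sharp$ is continuous with values in $\mathbb{R}\cup\{-\infty\}$, $\Phii_{\Bb}$ is $\Aa$-lower semi-continuous, and $\ff^{\Aa\leftrightarrow \Bb}$ is $\K$-inf-compact; this yields that the infimum in (\ref{eq1starminimax}) is a minimum and that $\Phii_{\Aa}^*$ is upper semi-continuous and compact-valued. For (b) I would apply Theorem~\ref{th: uscphii2}, whose hypotheses are $\ff^\sharp$ continuous with values in $\mathbb{R}\cup\{+\infty\}$ and $\ff$ $\K$-sup-compact; this yields that the supremums in (\ref{eq1starworstloss}) and (\ref{eq1starminimax}) are maximums and that $\Phii_{\Bb}^*$ is upper semi-continuous and compact-valued.

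Since the theorem is a synthesis of results proved earlier in the section, I expect no genuinely hard analytic step; the only point requiring care is the bookkeeping of value ranges. The subtle part is verifying finiteness of $\ff^\sharp$ and $\vv^\sharp$ so that the narrower range hypotheses of Theorems~\ref{th: uscphii1} and \ref{th: uscphii2} are satisfied. This is precisely where the assumption $\ff:\Gr(\Phii_{\Bb})\mapsto\mathbb{R}$ (rather than $\overline{\mathbb{R}}$) is essential, combined with the attainment of the inner maximum and the outer minimum guaranteed by the $\K$-sup-compactness and $\K$-inf-compactness hypotheses.
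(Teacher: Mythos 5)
Your proposal is correct and follows essentially the same route as the paper's own proof: cite Theorem~\ref{th:wloss_cont} for continuity of $\ff^\sharp$, Theorem~\ref{th:minimax_cont} for continuity of $\vv^\sharp$, and Theorems~\ref{th: uscphii1} and \ref{th: uscphii2} for statements (a) and (b). The only difference is that you explicitly verify real-valuedness of $\ff^\sharp$ and $\vv^\sharp$ via the attainment results (Theorems~\ref{th:wloss_usc} and \ref{th:minimax_lsc}) so that the range hypotheses of Theorems~\ref{th: uscphii1} and \ref{th: uscphii2} are met --- a point the paper's terser proof leaves implicit, and a worthwhile piece of bookkeeping.
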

\begin{proof}
Theorem~\ref{th:wloss_cont} implies that the worst-loss function $\ff^\sharp:
\Gr(\Phii_{\Aa})\subset\Xx\times\Aa\mapsto {\mathbb{R}}$ defined in (\ref{eq1starworstloss}) is continuous on $\Gr(\Phii_{\Aa}).$
Continuity of the minimax function $\vv^\sharp:
\Xx\mapsto {\mathbb{R}}$ defined in (\ref{eq1starminimax})
follows from Theorem~\ref{th:minimax_cont}. Theorems~\ref{th: uscphii1} and \ref{th: uscphii2} imply statements (a) and (b) respectively.
\qed \end{proof}

\section{Preserving Properties of $\K$-inf-compact functions}\label{s4}

In Section~\ref{sec:minimax} we considered problems in which players select actions deterministically. In other words, players play pure strategies.  The previous section describes the continuity properties for objective functions and solution multifunctions for such problems with possibly unbounded payoffs, and noncompact action sets. In general, it is known that, if the second player knows the decision of the first players, pure strategies are sufficient.  In Section~\ref{sec:perf} we show that pure strategies are indeed  sufficient for the problem studied in the previous section.  However, if players make decisions simultaneously, pure strategies usually are not sufficient, and the players should choose randomized strategies, which are probability distributions on the sets of actions. The remarkable fact is that the property of $\K$-inf-compactness is preserved when randomized strategies are used instead of pure ones. This section describes such results.  Most of them were derived in Feinberg et al. \cite{POMDP} for studying partially observable Markov decision processes.

Let $\mathbb{S}$ be a metric space. An integral $\int_\mathbb{S} f (s)\mu(ds)$ of a measurable ${\R}$-valued function $f$ on $\mathbb{S}$ over the measure $\mu \in
\mathbb{P}(\mathbb{S})$ is well-defined if either $\int_\mathbb{S} f^- (s)\mu(ds)>-\infty$ or $\int_\mathbb{S} f^+ (s)\mu(ds)<+\infty, $  where for $s\in \mathbb{S}$ 
\[f^-(s)=\min\{f(s),0\},\quad f^+(s)=\max\{f(s),0\}.\]   If the integral is well-defined, then \[\int_\mathbb{S} f (s)\mu(ds):=\int_\mathbb{S} f^+ (s)\mu(ds)+ \int_\mathbb{S} f^-(s)\mu(ds).\]

Let ${\mathcal B}(\mathbb{S})$ be a Borel
$\sigma$-field on $\mathbb{S},$ that is, the $\sigma$-field generated by all
open sets of the metric space $\mathbb{S}.$  For a nonempty Borel subset $S\subset \mathbb{S},$ denote by
${\mathcal B}(S)$ the $\sigma$-field whose elements are
intersections of $S$ with elements of ${\mathcal B}(\mathbb{S}).$  Observe
that $S$ is a metric space with the same metric as on $\mathbb{S},$ and
${\mathcal B}(S)$ is its Borel $\sigma$-field. For a metric space
$\mathbb{S},$ let $\mathbb{P}(\mathbb{S})$ be the set of probability measures on
$(\mathbb{S},{\mathcal B}(\mathbb{S}))$ and $\mathbb{P}^{fs}(\mathbb{S})$ denote the set of all probability measures whose supports are finite subsets of $\mathbb{S}.$ A sequence of probability measures
$\{\mu^{(n)}\}_{n=1,2,\ldots}$ from $\mathbb{P}(\mathbb{S})$ \textit{converges weakly} to $\mu\in\mathbb{P}(\mathbb{S})$ if for
each bounded continuous function $f$ on $\mathbb{S}$
\[\int_\mathbb{S} f(s)\mu^{(n)}(ds)\to \int_\mathbb{S} f(s)\mu(ds) \qquad {\rm as \quad
}n\to\infty.
\]
Note that the set $\mathbb{P}^{fs}(\mathbb{S})$ is dense in a separable metric space $\mathbb{P}(\mathbb{S})$ with respect to the weak convergence topology for probability measures, when $\mathbb{S}$ is
a separable metric space; Parthasarathy \cite[Chapter II, Theorem 6.3]{Part}.

Let $\X, \Y$ be nonempty Borel subsets of respective Polish spaces (complete separable metric spaces). The following lemma, three theorems, and a corollary describe preserving properties for lower semi-continuous, inf-compact,  and $\K$-inf-compact functions.

\begin{lemma}{\rm(Feinberg et al. \cite[Lemma~6.1]{POMDP})}\label{lem:lsc}
If a function $f:\X\times\Y\mapsto \mathbb{R}\cup\{+\infty\}$ is bounded from below and lower 
semi-continuous, then the function ${\hat{f}}:\X\times\mathbb{P}(\Y)\mapsto\mathbb{R}\cup\{+\infty\},$
\begin{equation}\label{eq:presprop}
{\hat{f}}(x,z):=\int_{\Y}f(x,y)z(dy),\quad x\in\X,\,z\in\mathbb{P}(\Y),
\end{equation}
is bounded from below by the same constant as $f$ 
and lower semi-continuous.
\end{lemma}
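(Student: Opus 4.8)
The plan is to prove the two assertions in turn, disposing of the lower bound immediately and devoting the main effort to lower semi-continuity. For the bound, if $f(x,y)\ge c$ for all $(x,y)\in\X\times\Y$ with $c\in\mathbb{R}$, then, since $z\in\mathbb{P}(\Y)$ is a probability measure, integrating this inequality gives $\hat f(x,z)=\int_\Y f(x,y)z(dy)\ge c\int_\Y z(dy)=c$, so $\hat f$ is bounded below by the same constant. For lower semi-continuity the idea is to approximate $f$ from below by an increasing sequence of bounded continuous functions and to pass the approximation through the integral. First I would form, by inf-convolution (Moreau--Yosida regularization) on the product metric space $\X\times\Y$,
\[
g_k(x,y):=\inf_{(x',y')\in\X\times\Y}\Big\{f(x',y')+k\big(d_\X(x,x')+d_\Y(y,y')\big)\Big\},
\]
and set $f_k:=\min\{g_k,k\}$. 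Standard properties of inf-convolution give that each $g_k$ is real-valued, $k$-Lipschitz, and bounded below by $c$, that $\{g_k\}$ is nondecreasing in $k$, and that $g_k\uparrow f$ pointwise precisely because $f$ is lower semi-continuous and bounded below. Hence each $f_k$ is continuous with $c\le f_k\le k$, the sequence $\{f_k\}$ is nondecreasing, and $f_k\uparrow f$ pointwise on $\X\times\Y$.

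Next I would define $\hat f_k(x,z):=\int_\Y f_k(x,y)z(dy)$ and show that $\hat f_k$ is continuous on $\X\times\mathbb{P}(\Y)$. Given $(x^{(n)},z^{(n)})\to(x,z)$, I would write
\[
\hat f_k(x^{(n)},z^{(n)})-\hat f_k(x,z)=\int_\Y\big[f_k(x^{(n)},y)-f_k(x,y)\big]z^{(n)}(dy)+\Big(\int_\Y f_k(x,y)z^{(n)}(dy)-\int_\Y f_k(x,y)z(dy)\Big).
\]
The second term tends to $0$ by the definition of weak convergence, since $f_k(x,\cdot)$ is bounded and continuous. For the first term I would use that $\{z^{(n)}\}$, being weakly convergent, is uniformly tight (each measure is Radon because $\Y$ is a Borel subset of a Polish space, and tightness is uniform along a convergent sequence); choosing a compact $K\subset\Y$ that carries all but $\varepsilon$ of the mass of every $z^{(n)}$, the joint continuity of $f_k$ together with compactness of $K$ yields $\sup_{y\in K}|f_k(x^{(n)},y)-f_k(x,y)|\to0$, while the contribution from $\Y\setminus K$ is at most a constant depending on $k$ times $\varepsilon$. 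Letting $n\to\infty$ and then $\varepsilon\to0$ shows the first term tends to $0$, so $\hat f_k$ is continuous.

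Finally, since $f\ge f_k$ and $z$ is a probability measure, $\hat f\ge\hat f_k$, so for any $(x^{(n)},z^{(n)})\to(x,z)$ I obtain $\ilim_{n\to\infty}\hat f(x^{(n)},z^{(n)})\ge\ilim_{n\to\infty}\hat f_k(x^{(n)},z^{(n)})=\hat f_k(x,z)$ by continuity of $\hat f_k$. Because $f_k(x,\cdot)\uparrow f(x,\cdot)$ with the common lower bound $c$, the monotone convergence theorem gives $\hat f_k(x,z)\uparrow\hat f(x,z)$ as $k\to\infty$; letting $k\to\infty$ then yields $\ilim_{n\to\infty}\hat f(x^{(n)},z^{(n)})\ge\hat f(x,z)$, which is exactly lower semi-continuity of $\hat f$. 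I expect the main obstacle to be the joint continuity of $\hat f_k$, whose proof rests on the uniform tightness of weakly convergent sequences of probability measures on the Borel set $\Y$; the truncation $f_k=\min\{g_k,k\}$ to bounded functions is essential both for invoking the definition of weak convergence and for bounding the tail contribution over $\Y\setminus K$.
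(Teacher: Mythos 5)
Your overall strategy is sound and matches the standard route to this result: the lower bound is immediate from $z$ being a probability measure, and lower semi-continuity follows once one produces a nondecreasing sequence of bounded continuous functions $f_k\uparrow f$, proves that each $(x,z)\mapsto\hat f_k(x,z)$ is continuous, and passes to the limit by monotone convergence. (The paper itself does not reprove this lemma; it quotes it from the cited reference, whose argument is of exactly this type.) However, your proof of the continuity of $\hat f_k$ contains a genuine error. You handle the term $\int_{\Y}\bigl[f_k(x^{(n)},y)-f_k(x,y)\bigr]z^{(n)}(dy)$ by asserting that the weakly convergent sequence $\{z^{(n)}\}$ is uniformly tight ``because $\Y$ is a Borel subset of a Polish space.'' It is true that each individual probability measure on such a $\Y$ is tight, but uniform tightness of a weakly convergent sequence is the converse half of Prokhorov's theorem, and it genuinely needs more than $\Y$ being Borel in a Polish space: by a classical example of Preiss, already on $\Y=\mathbb{Q}$ (a Borel subset of $\mathbb{R}$) there exist weakly convergent sequences of probability measures that are not uniformly tight. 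Since the lemma assumes only that $\Y$ is a Borel subset of a Polish space, the tightness step, as justified, fails.

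Fortunately, the gap closes with material you already have, and the tightness appeal is unnecessary. Your approximants come from inf-convolution, so $g_k$ is $k$-Lipschitz on $\X\times\Y$ for the metric $d_{\X}+d_{\Y}$, and $f_k=\min\{g_k,k\}$ is again $k$-Lipschitz; hence $\sup_{y\in\Y}\bigl|f_k(x^{(n)},y)-f_k(x,y)\bigr|\le k\,d_{\X}(x^{(n)},x)\to 0$, so the first term vanishes uniformly, with no tightness at all, while the second term tends to $0$ by weak convergence exactly as you argue. (An alternative repair, which needs only continuity rather than Lipschitz continuity of $f_k$: $x^{(n)}\to x$ and $z^{(n)}\to z$ weakly imply $\delta_{x^{(n)}}\otimes z^{(n)}\to\delta_{x}\otimes z$ weakly on the separable metric space $\X\times\Y$, and one then integrates the bounded continuous $f_k$ against these product measures.) Two minor points: your claim that each $g_k$ is real-valued requires $f\not\equiv+\infty$, so the degenerate case $f\equiv+\infty$ (where $\hat f\equiv+\infty$ is trivially lower semi-continuous, and in any case $f_k\equiv k$ still serves) should be noted; and the monotone convergence step should be applied to $f_k-c\ge 0$, as the lower bound $c$ may be negative.
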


\begin{theorem}{\rm(Feinberg et al. \cite[Theorem~6.1]{POMDP})}\label{th:inf-comp}
If $f:\X\times\Y\mapsto \mathbb{R}\cup\{+\infty\}$ is an inf-compact function on
$\X\times\Y,$ then the function ${\hat{f}}:\X\times\mathbb{P}(\Y)\mapsto\mathbb{R}\cup\{+\infty\}$ defined
in (\ref{eq:presprop}) is inf-compact  on $\X\times\mathbb{P}(\Y).$
\end{theorem}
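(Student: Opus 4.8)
The plan is to show directly that every level set $\mathcal{D}_{\hat f}(\lambda;\X\times\mathbb{P}(\Y))$ is compact. First I would record two consequences of inf-compactness of $f$. Writing $L_\mu:=\mathcal{D}_f(\mu;\X\times\Y)$, these sets are compact, hence closed, so $f$ is lower semi-continuous; moreover $f$ is bounded below, since otherwise the nonempty nested compact sets $\{L_{-m}\}_{m\in\mathbb{N}}$ would, by the finite intersection property, share a common point at which $f=-\infty$, contradicting $f:\X\times\Y\mapsto\mathbb{R}\cup\{+\infty\}$. Fix $c$ with $f\ge c$. By Lemma~\ref{lem:lsc} the function $\hat f$ is then bounded below (by $c$) and lower semi-continuous, so each $K:=\mathcal{D}_{\hat f}(\lambda;\X\times\mathbb{P}(\Y))$ is closed, and only relative compactness of $K$ remains (the case $\lambda<c$ is trivial since then $K=\emptyset$).

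Next I would take an arbitrary sequence $\{(x^{(n)},z^{(n)})\}_{n=1,2,\ldots}\subset K$, so that $\int_\Y f(x^{(n)},y)z^{(n)}(dy)\le\lambda$ for every $n$, and extract a convergent subsequence with limit in $K$. The crucial step is a tightness estimate combining Markov's inequality with compactness of the projections of the level sets of $f$. For $M>c$, Markov's inequality gives $z^{(n)}(\{y:f(x^{(n)},y)>M\})\le(\lambda-c)/(M-c)$, while $\{y:f(x^{(n)},y)\le M\}\subset{\rm proj}_\Y L_M=:C_M$, and $C_M$ is a compact subset of $\Y$ \emph{independent of $n$} (a continuous image of the compact set $L_M$). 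Hence $z^{(n)}(C_M)\ge 1-(\lambda-c)/(M-c)$ uniformly in $n$, so choosing $M$ large makes $\{z^{(n)}\}$ tight. By the direction of Prokhorov's theorem valid for separable metric spaces, a subsequence of $\{z^{(n)}\}$ converges weakly to some $z\in\mathbb{P}(\Y)$, the limit being a genuine probability measure precisely because tightness prevents mass from escaping.

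A parallel but simpler estimate controls the $\X$-component: taking $M=\lambda+1$ gives $z^{(n)}(\{y:f(x^{(n)},y)\le\lambda+1\})\ge 1-(\lambda-c)/(\lambda+1-c)>0$, so there exists $y^{(n)}$ with $(x^{(n)},y^{(n)})\in L_{\lambda+1}$; thus $x^{(n)}\in{\rm proj}_\X L_{\lambda+1}$, a compact subset of $\X$, and a further subsequence yields $x^{(n_k)}\to x\in\X$. Along this subsequence $(x^{(n_k)},z^{(n_k)})\to(x,z)$ in $\X\times\mathbb{P}(\Y)$, and lower semi-continuity of $\hat f$ gives $\hat f(x,z)\le\ilim_{k\to\infty}\hat f(x^{(n_k)},z^{(n_k)})\le\lambda$, so $(x,z)\in K$. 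Thus $K$ is sequentially compact; since $\X$ is separable metric and $\mathbb{P}(\Y)$ is metrizable in the weak topology ($\Y$ being separable metric), the product $\X\times\mathbb{P}(\Y)$ is metrizable and sequential compactness is compactness. Hence $K$ is compact, and $\hat f$ is inf-compact.

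I expect the tightness estimate to be the main obstacle: it is the point at which inf-compactness of $f$, expressed as compactness of level sets in the \emph{product} $\X\times\Y$, must be converted into uniform tightness of the measures $z^{(n)}$, and the essential feature is that the Markov bound is controlled by a single compact set $C_M={\rm proj}_\Y L_M$ that does not depend on $n$. Some care is also needed because $\Y$ is only a Borel subset of a Polish space, so only the forward (tightness $\Rightarrow$ relative compactness) implication of Prokhorov's theorem is available — which is exactly the one used here.
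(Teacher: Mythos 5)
Your proof is correct. The paper does not prove this theorem itself---it quotes it from Feinberg et al.~\cite[Theorem~6.1]{POMDP}---but your argument is essentially the proof of the cited result: boundedness below and lower semi-continuity of $\hat{f}$ via Lemma~\ref{lem:lsc}, uniform tightness of the measures $z^{(n)}$ via Markov's inequality against the compact sets ${\rm proj}_{\Y}\mathcal{D}_f(M;\X\times\Y)$, the direct (tightness $\Rightarrow$ relative compactness) half of Prokhorov's theorem, and compactness of ${\rm proj}_{\X}\mathcal{D}_f(\lambda+1;\X\times\Y)$ to keep the $\X$-coordinate from escaping, followed by lower semi-continuity to place the limit back in the level set.
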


\begin{corollary}\label{cor:K-inf-comp}
If $f:\X\times\Y\mapsto \mathbb{R}\cup\{+\infty\}$ is a $\K$-inf-compact function on
$\X\times\Y,$ then the function ${\hat{f}}:\X\times\mathbb{P}(\Y)\mapsto\mathbb{R}\cup\{+\infty\}$ defined
in (\ref{eq:presprop}) is $\K$-inf-compact on $\X\times\mathbb{P}(\Y).$
\end{corollary}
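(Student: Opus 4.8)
The plan is to reduce the corollary to Theorem~\ref{th:inf-comp} by localizing in the state variable. By Definition~\ref{def:kinf}, applied to the strict set-valued mapping $x\mapsto\Y$ whose domain is $\X$, the hypothesis that $f$ is $\K$-inf-compact on $\X\times\Y$ means precisely that $f$ is inf-compact on $C\times\Y$ for every $C\in\K(\X)$. Likewise, proving that $\hat{f}$ is $\K$-inf-compact on $\X\times\mathbb{P}(\Y)$ amounts to showing that $\hat{f}$ is inf-compact on $C\times\mathbb{P}(\Y)$ for every $C\in\K(\X)$. So I would fix an arbitrary $C\in\K(\X)$ and treat it as a new state space.

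The key observation is that $C$, being a nonempty compact metric space, is complete and separable, hence a Polish space, and it is a Borel (indeed closed) subset of the Polish space containing $\X$; therefore it is an admissible state space for Theorem~\ref{th:inf-comp}. The restriction $f\big|_{C\times\Y}$ is inf-compact on $C\times\Y$ by the reformulation above, so Theorem~\ref{th:inf-comp}, applied with the state space $C$ and the action space $\Y$, yields that the function $(x,z)\mapsto\int_\Y f\big|_{C\times\Y}(x,y)z(dy)$ is inf-compact on $C\times\mathbb{P}(\Y)$. Since $f\big|_{C\times\Y}(x,y)=f(x,y)$ for $x\in C$, this function coincides with the restriction $\hat{f}\big|_{C\times\mathbb{P}(\Y)}$, and therefore $\hat{f}$ is inf-compact on $C\times\mathbb{P}(\Y)$.

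As $C\in\K(\X)$ was arbitrary, $\hat{f}$ is inf-compact on $C\times\mathbb{P}(\Y)$ for every $C\in\K(\X)$, which is exactly the assertion that $\hat{f}$ is $\K$-inf-compact on $\X\times\mathbb{P}(\Y)$; here I would invoke Remark~\ref{rem:comp} to identify the compact subsets of $C\times\mathbb{P}(\Y)$ with those of $\X\times\mathbb{P}(\Y)$, so that inf-compactness on $C\times\mathbb{P}(\Y)$ is unambiguous. I do not expect a genuine obstacle: the entire substantive content of the preservation statement is carried by Theorem~\ref{th:inf-comp}, and the corollary is a routine localization argument. The only point requiring a moment's care is the verification that restricting the state space to the compact set $C$ keeps us within the hypotheses of Theorem~\ref{th:inf-comp}, namely that $C$ is a Borel subset of a Polish space, which is immediate since a compact metric space is complete and separable.
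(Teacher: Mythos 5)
Your proposal is correct and follows essentially the same route as the paper's own proof: fix an arbitrary $C\in\K(\X)$, observe that $f\big|_{C\times\Y}$ is inf-compact on $C\times\Y$ by the definition of $\K$-inf-compactness, apply Theorem~\ref{th:inf-comp} with $C$ as the state space to get inf-compactness of $\hat{f}$ on $C\times\mathbb{P}(\Y)$, and conclude since $C$ was arbitrary. Your explicit verification that $C$ is an admissible state space for Theorem~\ref{th:inf-comp} (a compact metric space being Polish, hence a Borel subset of a Polish space) is a detail the paper leaves implicit, and it is a worthwhile addition.
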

\begin{proof}
According to Definition~\ref{def:kinf}, the function ${\hat{f}}:\X\times\mathbb{P}(\Y)\mapsto\mathbb{R}\cup\{+\infty\}$ defined
in (\ref{eq:presprop}) is $\K$-inf-compact on $\X\times\mathbb{P}(\Y)$ if and only if for every $C\in \K(\X)$ this function is inf-compact on $C\times \mathbb{P}(\Y).$

Let us prove that the function ${\hat{f}}:\X\times\mathbb{P}(\Y)\mapsto\mathbb{R}\cup\{+\infty\}$ defined
in (\ref{eq:presprop}) is inf-compact on $C\times \mathbb{P}(\Y)$ for each $C\in \K(\X).$ For this purpose we fix an arbitrary $C\in \K(\X)$ and note that the function $f\big|_C:C\times\Y\mapsto \mathbb{R}\cup\{+\infty\}$ is inf-compact on $C\times\Y$ because this function  is $\K$-inf-compact on
$\X\times\Y.$  Theorem~\ref{th:inf-comp} implies that the function ${\hat{f}}$ defined
in (\ref{eq:presprop}) is inf-compact on $C\times\mathbb{P}(\Y).$ Therefore, this function is $\K$-inf-compact on $\X\times\mathbb{P}(\Y)$ since $C\in \K(\X)$ is arbitrary. 
\qed \end{proof}

\begin{theorem}{\rm(Feinberg et al. \cite[Theorem~3.3]{POMDP})}\label{th:K-inf-comp}
If the function $f:\X\times\Y\mapsto \mathbb{R}\cup\{+\infty\}$ is bounded from below and $\K$-inf-compact on
$\X\times\Y,$ then the function $\bar{f}:\mathbb{P}(\X)\times\Y\mapsto\mathbb{R}\cup\{+\infty\},$
\[
\bar{f}(z,y):=\int_{\X}f(x,y)z(dx),\quad z\in\mathbb{P}(\X),\,y\in\Y,
\]
is bounded from below by the same constant as $f$ and $\K$-inf-compact on $\mathbb{P}(\X)\times \Y. $
\end{theorem}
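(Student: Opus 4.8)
The plan is to establish the two conditions of Lemma~\ref{k-inf-compact} for $\bar f$, viewed as a function on the graph $\mathbb P(\X)\times\Y$ of the strict set-valued mapping $z\mapsto\Y$ (so that ${\rm Dom\,}=\mathbb P(\X)$). Boundedness from below is immediate: if $f\ge m$, then $\bar f(z,y)=\int_\X f(x,y)z(dx)\ge m$ because $z$ is a probability measure. Since a constant shift changes neither lower semicontinuity nor $\K$-inf-compactness, I may and do assume $f\ge 0$ from now on.

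First I would verify condition (i) of Lemma~\ref{k-inf-compact}, i.e. lower semicontinuity of $\bar f$ on $\mathbb P(\X)\times\Y$. Because $f$ is $\K$-inf-compact it is lower semicontinuous by Lemma~\ref{k-inf-compact}(i), and it is bounded below; applying Lemma~\ref{lem:lsc} with the roles of $\X$ and $\Y$ interchanged (that is, to $f'(y,x):=f(x,y)$ on $\Y\times\X$, treating $\Y$ as the first variable) shows that $(y,z)\mapsto\int_\X f(x,y)z(dx)=\bar f(z,y)$ is lower semicontinuous on $\Y\times\mathbb P(\X)$, hence $\bar f$ is lower semicontinuous.

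The substance is condition (ii). Let $z^{(n)}\to z$ weakly in $\mathbb P(\X)$ with $z\in\mathbb P(\X)$, let $y^{(n)}\in\Y$, and suppose $\bar f(z^{(n)},y^{(n)})\le\lambda$ for all $n$ and some $\lambda\in\mathbb R$; I must produce a limit point $y\in\Y$ of $\{y^{(n)}\}$. Arguing by contradiction, assume $\{y^{(n)}\}$ has no limit point in $\Y$, and consider the relaxed lower limit
\[
\underline f(x):=\lim_{\delta\downarrow 0}\,\liminf_{n\to\infty}\,\inf_{x'\in B_\delta(x)}f(x',y^{(n)}),\qquad x\in\X .
\]
I claim $\underline f\equiv+\infty$ on $\X$. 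Indeed, if $\underline f(x)\le M<\infty$ for some $x\in\X$, then choosing $\delta=1/k$ yields strictly increasing indices $n_k$ and points $x^{(k)}\in B_{1/k}(x)$ with $f(x^{(k)},y^{(n_k)})\le M+1$; since $x^{(k)}\to x\in\X$ and the values $f(x^{(k)},y^{(n_k)})$ are bounded above, the $\K$-inf-compactness of $f$ (condition (ii) of Lemma~\ref{k-inf-compact-strict}, applied to the strict mapping $x\mapsto\Y$) forces $\{y^{(n_k)}\}$, and hence $\{y^{(n)}\}$, to have a limit point in $\Y$, a contradiction. Granting the claim, a Fatou-type lemma for weakly converging probability measures (as developed by Feinberg et al.) applied to the nonnegative integrands $h_n(\cdot):=f(\cdot,y^{(n)})$ gives
\[
\lambda\ge\liminf_{n\to\infty}\int_\X f(x,y^{(n)})z^{(n)}(dx)\ge\int_\X\underline f(x)\,z(dx)=+\infty ,
\]
which is absurd. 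Hence $\{y^{(n)}\}$ has a limit point $y\in\Y$, establishing condition (ii), and Lemma~\ref{k-inf-compact} then yields that $\bar f$ is $\K$-inf-compact on $\mathbb P(\X)\times\Y$.

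The main obstacle is the second displayed inequality: a Fatou-type lemma for weak convergence with integrands $h_n$ that vary with $n$, whose right-hand side involves the joint (over $n$ and localization in $x$) relaxed lower limit $\underline f$. This is the technical heart, and it is precisely the $\K$-inf-compactness of $f$ that forces $\underline f\equiv+\infty$ under the no-limit-point assumption; one must also check the hypothesis of that Fatou lemma, here only uniform boundedness from below of the $h_n$, guaranteed by $f\ge 0$. An alternative route avoids the Fatou lemma by extracting, from a fixed compact $C\subset\X$ carrying $z^{(n)}$-mass bounded away from $0$, a point $x^{(n)}\in C$ with $f(x^{(n)},y^{(n)})$ bounded above via Markov's inequality and then invoking Lemma~\ref{k-inf-compact-strict}(ii); this route is more delicate, however, because $\X$ is only a Borel subset of a Polish space, so uniform tightness of $\{z^{(n)}\}$ cannot be taken for granted, which is why I prefer the Fatou-based argument.
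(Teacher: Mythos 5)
Your proof is correct, but note what the paper itself does with this statement: it does not prove it at all. Theorem~\ref{th:K-inf-comp} is imported verbatim by citation from Feinberg et al.~\cite[Theorem~3.3]{POMDP}, so there is no internal argument to compare against; what you have written is, in substance, a reconstruction of the proof behind that citation. Your skeleton is the right one: boundedness from below is immediate; lower semi-continuity of $\bar f$ follows from Lemma~\ref{lem:lsc} applied to $(y,x)\mapsto f(x,y)$ (the coordinate swap is harmless, since the hypotheses of that lemma are symmetric in the two spaces and swapping coordinates is a homeomorphism); and the limit-point condition (ii) of Lemma~\ref{k-inf-compact} is where all the content lies. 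Your handling of (ii) is sound: under the no-limit-point hypothesis, the monotonicity in $\delta$ of $\liminf_{n}\inf_{x'\in B_\delta(x)}f(x',y^{(n)})$ lets you extract, for each $k$, strictly increasing $n_k$ and points $x^{(k)}\in B_{1/k}(x)$ with $f(x^{(k)},y^{(n_k)})\le M+1$, and Lemma~\ref{k-inf-compact-strict}(ii) then produces a limit point of $\{y^{(n_k)}\}$, a contradiction; hence $\underline f\equiv+\infty$, and the Fatou-type inequality forces $\liminf_n\bar f(z^{(n)},y^{(n)})=+\infty$, contradicting the bound $\lambda$. The one dependency you should state as a named citation rather than a gesture is the Fatou lemma itself: with integrands varying in $n$ and the lower envelope taken jointly in $n$ and the space variable, this is exactly Feinberg, Kasyanov, and Zadoianchuk, ``Fatou's lemma for weakly converging probabilities'' (Theory Probab.\ Appl.\ 58(4), 2014) --- a nontrivial published result that is not in the present paper's toolkit, and in fact the same tool that underlies the cited proof of \cite[Theorem~3.3]{POMDP}. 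It holds on arbitrary metric spaces (its proof needs only the portmanteau inequality for open sets), so your preference for it over the Markov-inequality/tightness route is well judged: $\X$ is only a Borel subset of a Polish space, it need not be Polish in its relative topology, and uniform tightness of a weakly convergent sequence is genuinely not guaranteed there. So: a correct, essentially self-contained proof modulo one clearly identified imported lemma, replacing the paper's pure citation.
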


\begin{theorem}\label{th:K-inf-comp_two_var}
If the function $f:\X\times\Y\mapsto \mathbb{R}\cup\{+\infty\}$ is bounded from below and $\K$-inf-compact on
$\X\times\Y,$ then the function $\tilde{f}:\mathbb{P}(\X)\times\mathbb{P}(\Y)\mapsto\mathbb{R}\cup\{+\infty\},$
\begin{equation}\label{eq:bar}
\tilde{f}(z^\X,z^\Y):=\int_{\X}\int_\Y f(x,y)z^\Y(dy)z^\X(dx),\quad z^\X\in\mathbb{P}(\X),\,z^\Y\in\mathbb{P}(\Y),
\end{equation}
is bounded from below by the same constant as $f$ and $\K$-inf-compact on $\mathbb{P}(\X)\times \mathbb{P}(\Y).$ 
\end{theorem}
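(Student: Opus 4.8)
The plan is to obtain $\tilde f$ by composing the two one-variable randomization results already established, applied in the order that reproduces the iterated integral in (\ref{eq:bar}) verbatim. Concretely, I would first integrate out the $y$-variable using Corollary~\ref{cor:K-inf-comp}, and then integrate out the $x$-variable using Theorem~\ref{th:K-inf-comp}. Since the definition of $\tilde f$ performs the inner integration over $y$ and the outer integration over $x$, this particular order makes the composed function coincide with $\tilde f$ on the nose, so no appeal to Fubini/Tonelli is needed.

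First I would record the standing hypothesis of Section~\ref{s4}: to invoke Corollary~\ref{cor:K-inf-comp} and Theorem~\ref{th:K-inf-comp} with $\mathbb{P}(\X)$ and $\mathbb{P}(\Y)$ as ambient spaces, these must themselves be nonempty Borel subsets of Polish spaces. This holds because whenever $\Y$ is a Borel subset of a Polish space $\bar\Y,$ the set $\mathbb{P}(\Y)$ is a Borel subset of the Polish space $\mathbb{P}(\bar\Y)$ (and likewise for $\X$); these are standard facts about spaces of probability measures. I would also fix a constant $c\in\mathbb{R}$ with $f\ge c$ on $\X\times\Y.$

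Next, I would define the partially randomized function
\[
\hat f(x,z^\Y):=\int_\Y f(x,y)\,z^\Y(dy),\qquad x\in\X,\ z^\Y\in\mathbb{P}(\Y),
\]
which is well defined because $f$ is bounded from below, and which satisfies $\hat f\ge c$ since each $z^\Y$ is a probability measure. Corollary~\ref{cor:K-inf-comp} shows that $\hat f$ is $\K$-inf-compact on $\X\times\mathbb{P}(\Y).$ I would then apply Theorem~\ref{th:K-inf-comp} to $\hat f,$ keeping $\X$ as the first factor and letting $\mathbb{P}(\Y)$ play the role of the second factor; since $\hat f$ is bounded from below by $c$ and $\K$-inf-compact on $\X\times\mathbb{P}(\Y),$ the theorem yields that
\[
(z^\X,z^\Y)\mapsto\int_\X \hat f(x,z^\Y)\,z^\X(dx)
\]
is bounded from below by $c$ and $\K$-inf-compact on $\mathbb{P}(\X)\times\mathbb{P}(\Y).$ Unravelling the definition of $\hat f,$ this function is exactly $\tilde f$ of (\ref{eq:bar}), which finishes the argument.

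The only genuinely nontrivial point is the verification that $\mathbb{P}(\X)$ and $\mathbb{P}(\Y)$ fall under the standing assumption of Section~\ref{s4}; once this is in place, the proof is a formal two-step composition in which the lower bound $c$ is transported unchanged at each stage. I note that carrying out the integrations in the opposite order---first Theorem~\ref{th:K-inf-comp} over $x,$ then Corollary~\ref{cor:K-inf-comp} over $y$---would instead produce the iterated integral in the reverse order, and hence would require an additional Tonelli step (legitimate since $f\ge c$) to identify it with (\ref{eq:bar}); the order chosen above sidesteps this bookkeeping entirely.
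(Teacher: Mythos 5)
Your proof is correct, and it is built from the same two ingredients as the paper's own proof---Theorem~\ref{th:K-inf-comp} and Corollary~\ref{cor:K-inf-comp}---but composed in the opposite order, and this reversal is a genuine (if small) improvement in rigor. The paper first applies Theorem~\ref{th:K-inf-comp} to $f$ to get $\bar f$ on $\mathbb{P}(\X)\times\Y$, then Corollary~\ref{cor:K-inf-comp} to $\bar f$; the resulting function is $(z^\X,z^\Y)\mapsto\int_\Y\int_\X f(x,y)\,z^\X(dx)\,z^\Y(dy)$, i.e.\ the iterated integral in the order opposite to (\ref{eq:bar}), so identifying it with $\tilde f$ tacitly uses Tonelli (legitimate because $f$ is bounded below, but not mentioned in the paper). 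Your order---Corollary~\ref{cor:K-inf-comp} over $y$ first, then Theorem~\ref{th:K-inf-comp} over $x$---produces (\ref{eq:bar}) verbatim and dispenses with that step, exactly as you point out. Two further minor differences: the paper establishes the lower bound separately by applying Lemma~\ref{lem:lsc} twice, whereas you transport the constant $c$ through the composition directly (equally valid, since Theorem~\ref{th:K-inf-comp} preserves the bound and integration against probability measures does too); and you make explicit the standing-hypothesis check that $\mathbb{P}(\X)$ and $\mathbb{P}(\Y)$ are Borel subsets of Polish spaces, which both proofs need in order to re-invoke the one-variable results (the paper needs it for $\mathbb{P}(\X)$ when applying Corollary~\ref{cor:K-inf-comp} to $\bar f$) but which the paper leaves implicit. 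So your write-up is, if anything, slightly tighter than the published argument.
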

\begin{proof}
Lemma~\ref{lem:lsc}, being applied to $f:\X\times\Y\mapsto \mathbb{R}\cup\{+\infty\},$ implies that
the function ${\hat{f}}:\X\times\mathbb{P}(\Y)\mapsto\mathbb{R}\cup\{+\infty\}$ defined in (\ref{eq:presprop})
is bounded from below by the same constant as $f.$ Then, Lemma~\ref{lem:lsc}, being applied to
${\hat{f}}:\X\times\mathbb{P}(\Y)\mapsto\mathbb{R}\cup\{+\infty\},$ implies that
the function $\tilde{f}:\mathbb{P}(\X)\times\mathbb{P}(\Y)\mapsto\mathbb{R}\cup\{+\infty\}$ is bounded from below by the same constant as $f.$

Theorem~\ref{th:K-inf-comp}, being applied to $f:\X\times\Y\mapsto \mathbb{R}\cup\{+\infty\},$ implies that the function $\bar{f}:\mathbb{P}(\X)\times\Y\mapsto\mathbb{R}\cup\{+\infty\}$ defined in (\ref{eq:bar}) is $\K$-inf-compact on $\mathbb{P}(\X)\times \Y.$ Therefore, Corollary~\ref{cor:K-inf-comp}, being applied to $\bar{f}:\mathbb{P}(\X)\times\Y\mapsto\mathbb{R}\cup\{+\infty\},$ implies that the function $\tilde{f}:\mathbb{P}(\X)\times\mathbb{P}(\Y)\mapsto\mathbb{R}\cup\{+\infty\}$
is $\K$-inf-compact on $\mathbb{P}(\X)\times \mathbb{P}(\Y).$
\qed \end{proof}

\section{Two-Person Zero-Sum Games with Simultaneous Moves}\label{s5}


In this section we  provide sufficient conditions for continuity of the lopsided value functions, upper semi-continuity of solution multifunctions, and compactness of solution sets for zero-sum stochastic games with possibly uncountable and noncompact action sets and unbounded payoff functions.

\subsection{Preliminaries}\label{sub:prem}


\begin{definition}\label{defi:game}  A \textit{  two-person zero-sum game} is a triplet $\{\A,\bb, c\},$ where
  \begin{itemize}
\item[{\rm(i)}] $\A$ is the \textit{space of actions for Player I}, which is a nonempty Borel subset of a Polish space;
\item[{\rm(ii)}] $\bb$ is the \textit{space of actions for Player II}, which is a nonempty Borel subset of a Polish space;
\item[{\rm(iii)}] the \textit{payoff} to  Player II, $-\infty< c(a,b)< +\infty,$ for choosing actions $a\in \A$ and $b\in \bb,$ is a \textit{measurable} function on $\A\times\bb;$
    \item[{\rm(iv)}] for each $b\in\bb$ the function $a\mapsto c(a,b)$ is bounded from below on $\A;$
\item[{\rm(v)}]  for each $a\in\A$ the function $b\mapsto c(a,b)$ is bounded from above on $\bb.$
\end{itemize}
\end{definition}
\begin{remark}\label{rem:sim}
{\rm If a triplet $\{\A,\bb, c\}$ is a two-person zero-sum game as defined above, then the triplet $\{\bb,\A,-c^{\A\leftrightarrow\bb}\},$ where $c^{\A\leftrightarrow\bb}(b,a)=c(a,b)$ for each $a\in\A$ and $b\in\bb,$ is also a two-person zero-sum game satisfying conditions in Definition~\ref{defi:game}.}
\end{remark}
\textit{The game is played as follows}:


$\bullet$  the decision-makers (Players I and II) choose simultaneously respective actions $a\in \A$ and $b\in\bb;$

$\bullet$ the result $(a,b)$ is announced to both of them;

$\bullet$ Player I pays Player II the amount $c(a,b).$

\textit{Strategies} (sometimes called mixed strategies) for Players I and II are probability measures
$\pi^\A\in \mathbb{P}( \mathbb{A})$ and $\pi^\bb\in \mathbb{P}( \mathbb{B}).$ 
 Moreover, a strategy  $\pi^\A$ ($\pi^\bb$)
is called \textit{pure}, if the probability measure
$\pi^\A(\,\cdot\,)$ ($\pi^\bb(\,\cdot\,)$) is concentrated at a point. 
Note that $\mathbb{P}(\A)$
is the
\textit{set of strategies} for Player I, 
and $\mathbb{P}(\bb)$ is the
\textit{set of strategies} for Player II. 

\begin{remark}\label{rem:ma}
{\rm
Assumptions (iv) and (v) for the game $\{\A,\bb,c\}$ are natural because without them the expected payoffs may be undefined even if one of the players chooses a pure strategy.
} 
\end{remark}

Let us set
\[
\begin{aligned}{\hat{c}}^\oplus(\pi^\A,\pi^\bb)&:=\int_{\A}\int_{\bb}c^+(a,b)\pi^\bb(db)\pi^\A(da),\\
&{\hat{c}}^\ominus(\pi^\A,\pi^\bb):=\int_{\A}\int_{\bb}c^-(a,b)\pi^\bb(db)\pi^\A(da),
\end{aligned}
\]
for each $(\pi^\A,\pi^\bb)\in \mathbb{P}(\A)\times\mathbb{P}(\bb).$ 
Then
the \textit{expected
payoff} to  Player II
\[
{\hat{c}}(\pi^\A,\pi^\bb):= {\hat{c}}^\oplus(\pi^\A,\pi^\bb)+{\hat{c}}^\ominus(\pi^\A,\pi^\bb),
\]
is well-defined if either ${\hat{c}}^\oplus(\pi^\A,\pi^\bb)<+\infty$ or ${\hat{c}}^\ominus(\pi^\A,\pi^\bb)>-\infty;$ $(\pi^\A,\pi^\bb)\in \mathbb{P}(\A)\times\mathbb{P}(\bb).$  Of course,
when the function $c$ is unbounded both below as well as above, the quantity ${\hat{c}}(\pi^\A,\pi^\bb)$ can be \textit{undefined} for some 
$(\pi^\A,\pi^\bb)\in\mathbb{P}(\A)\times\mathbb{P}(\bb).$ We denote  
\[
\begin{aligned}
&\mathbb{P}^{S}_{\pi^\A}(\bb):=\{\pi^\bb\in\mathbb{P}(\bb)\,:\, {\hat{c}}(\pi^\A,\pi^\bb)\mbox{ is well-defined}\},\quad \pi^\A\in\mathbb{P}(\A);\\
&\mathbb{P}^{S}_{\pi^\bb}(\A):=\{\pi^\A\in\mathbb{P}(\A)\,:\, {\hat{c}}(\pi^\A,\pi^\bb)\mbox{ is well-defined}\},\quad \pi^\bb\in\mathbb{P}(\bb).
\end{aligned}
\]

Further, if a measure $\pi^\A\in\mathbb{P}(\A)$ is concentrated at a point $a\in \A,$ then
we will write ${\hat{c}}(a,\pi^\bb)$ instead of ${\hat{c}}(\pi^\A,\pi^\bb)$ for each $\pi^\bb \in \mathbb{P}(\bb).$ Similarly, if a measure $\pi^\bb\in\mathbb{P}(\bb)$ is concentrated at a point $b\in \bb,$ then
we will write ${\hat{c}}(\pi^\A,b)$ instead of ${\hat{c}}(\pi^\A,\pi^\bb)$ for each $\pi^\A \in \mathbb{P}(\A).$

\begin{remark}\label{rem:new1}
{\rm Assumption (iv) for the game $\{\A,\bb,c\}$ implies that
 ${\hat{c}}^\ominus(\pi^\A,b)>-\infty$ for each $\pi^\A\in \mathbb{P}(\A)$ and $b\in\bb.$ Therefore,
$\mathbb{P}^{fs}(\bb)\subset \mathbb{P}^{S}_{\pi^\A}(\bb)$ for each $\pi^\A\in\mathbb{P}(\A)$ and, since $\mathbb{P}^{fs}(\bb)$ is dense in $\mathbb{P}(\bb),$ then $\cap_{\pi^\A\in\mathbb{P}(\A)}\mathbb{P}^{S}_{\pi^\A}(\bb)$ is dense in $\mathbb{P}(\bb).$}
\end{remark}

\begin{remark}\label{rem:new2}
{\rm
Assumption~(v) for the game $\{\A,\bb,c\}$ implies that ${\hat{c}}^\oplus(a,\pi^\bb)<+\infty$ for each $a\in\A$ and $\pi^\bb\in \mathbb{P}(\bb).$ Thus,
$\mathbb{P}^{fs}(\A)\subset \mathbb{P}^{S}_{\pi^\bb}(\A)$ for each $\pi^\bb\in\mathbb{P}(\bb)$ and, since $\mathbb{P}^{fs}(\A)$ is dense in $\mathbb{P}(\A),$ then $\cap_{\pi^\bb\in\mathbb{P}(\bb)}\mathbb{P}^{S}_{\pi^\bb}(\A)$ is dense in $\mathbb{P}(\A).$}
\end{remark}

The set of all strategies for each player is partitioned into the sets of \emph{safe} strategies $\mathbb{P}^{S}(\A)$ and $\mathbb{P}^{S}(\bb)$  (strategies, for which the expected payoff is well-defined for all strategies played by another player) and \emph{unsafe} strategies $\mathbb{P}^{U}(\A)$ and $\mathbb{P}^{U}(\bb):$ 
\[
\begin{aligned}
\mathbb{P}^{S}(\A)&:=\{\pi^\A\in\mathbb{P}(\A)\,:\, \mathbb{P}^{S}_{\pi^\A}(\bb)=\mathbb{P}(\bb)\},\\ &\mathbb{P}^{U}(\A):=\{\pi^\A\in\mathbb{P}(\A)\,:\, \mathbb{P}^{S}_{\pi^\A}(\bb)\ne\mathbb{P}(\bb)\};\\
\mathbb{P}^{S}(\bb)&:=\{\pi^\bb\in\mathbb{P}(\bb)\,:\, \mathbb{P}^{S}_{\pi^\bb}(\A)=\mathbb{P}(\A)\},\\ &\mathbb{P}^{U}(\bb):=\{\pi^\bb\in\mathbb{P}(\bb)\,:\, \mathbb{P}^{S}_{\pi^\bb}(\A)\ne\mathbb{P}(\A)\}.
\end{aligned}
\]

\begin{remark}\label{rem:uncer}{\rm
We note that $\mathbb{P}(\A)=\mathbb{P}^{S}(\A)\cup \mathbb{P}^{U}(\A),$ $\mathbb{P}(\bb)=\mathbb{P}^{S}(\bb)\cup \mathbb{P}^{U}(\bb),$ $\mathbb{P}^{S}(\A)\cap \mathbb{P}^{U}(\A)=\emptyset,$ and $\mathbb{P}^{S}(\bb)\cap \mathbb{P}^{U}(\bb)=\emptyset.$ Moreover,
$\mathbb{P}^{fs}(\A)\subset \mathbb{P}^{S}(\A)$ (see assumption (iv) in Definition~\ref{defi:game} of the game $\{\A,\bb,c\}$ and Remark~\ref{rem:new1}) and $\mathbb{P}^{fs}(\bb)\subset \mathbb{P}^{S}(\bb)$ (see assumption (v) in Definition~\ref{defi:game} and Remark~\ref{rem:new2}). Therefore, $\mathbb{P}^{S}(\A)$ is dense in $\mathbb{P}(\A)$ and
$\mathbb{P}^{S}(\bb)$ is dense in $\mathbb{P}(\bb).$ 
}
\end{remark}
\begin{remark}\label{rem:equivEFAB}{\rm 
Observe that $\mathbb{P}^S(\A)=\mathbb{P}(\A)$ if and only if $\hat{c}(\pi^\A,\pi^\bb)$ is well-defined for all pairs $(\pi^\A,\pi^\bb)\in\mathbb{P}(\A)\times\mathbb{P}(\bb).$ Therefore, the following five claims are equivalent: (i) $\mathbb{P}^S(\A)=\mathbb{P}(\A),$ (ii) $\mathbb{P}^U(\A)=\emptyset,$ (iii) $\mathbb{P}^S(\bb)=\mathbb{P}(\bb),$ (iv) $\mathbb{P}^U(\bb)=\emptyset,$ (v) $\hat{c}(\pi^\A,\pi^\bb)$ is well-defined for all pairs $(\pi^\A,\pi^\bb)\in\mathbb{P}(\A)\times\mathbb{P}(\bb).$
}
\end{remark}

Let us introduce the following notations:
\begin{equation}\label{eq:maxmin}
\begin{aligned}
&{\hat{c}}^{\sharp}(\pi^\A):=\sup\limits_{b\in \bb} {\hat{c}}(\pi^\A,b), &\mathbb{P}^\sharp_\alpha(\A):=\{\pi_*^\A\in \mathbb{P}(\A)\,:\, {\hat{c}}^\sharp(\pi^\A_*)\le \alpha\},\\
&{\hat{c}}^{\flat}(\pi^\bb):=\inf\limits_{a\in \A } {\hat{c}}(a,\pi^\bb),
&\mathbb{P}^\flat_\beta(\bb):=\{\pi_*^\bb\in \mathbb{P}(\bb)\,:\, {\hat{c}}^\flat(\pi_*^\bb)\ge \beta\},
\end{aligned}
\end{equation}
for each $\pi^\A\in \mathbb{P}(\A),$ $\pi^\bb\in \mathbb{P}(\bb),$ $\alpha,\beta \in\R.$ Remarks~\ref{rem:new1} and \ref{rem:new2} imply respectively that ${\hat{c}}^{\sharp}(\pi^\A)>-\infty$ for all $\pi^\A\in\mathbb{P}(\A)$ and ${\hat{c}}^{\flat}(\pi^\bb)<+\infty$ for all $\pi^\bb\in\mathbb{P}(\bb).$

\begin{theorem}\label{cor:mainI}
Let  $\{\A,\bb, c\}$ be  a two-person zero-sum game  introduced in Definition~\ref{defi:game} and 
$(\pi^\A,\pi^\bb)\in\mathbb{P}(\A)\times\mathbb{P}(\bb).$  Then
the following two equalities hold:
\begin{equation}\label{eq:prop:mainIa}
{\hat{c}}^{\sharp}(\pi^\A)=\sup_{\pi_*^\bb\in\mathbb{P}^S_{\pi^\A}(\bb)} {\hat{c}}(\pi^\A,\pi_*^\bb ),
\end{equation}
\begin{equation}\label{eq:prop:mainIb}
{\hat{c}}^{\flat}(\pi^\bb)=\inf_{\pi_*^\A\in \mathbb{P}^S_{\pi^\bb}(\A)} {\hat{c}}(\pi_*^\A,\pi^\bb),
\end{equation}
where ${\hat{c}}^\sharp$ and ${\hat{c}}^\flat$ are defined in (\ref{eq:maxmin}).
\end{theorem}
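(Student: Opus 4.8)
The plan is to establish \eqref{eq:prop:mainIa} directly and then deduce \eqref{eq:prop:mainIb} from it through the symmetry recorded in Remark~\ref{rem:sim}. Fix $\pi^\A\in\mathbb{P}(\A)$ and set $g(b):={\hat{c}}(\pi^\A,b)=\int_\A c(a,b)\,\pi^\A(da)$. Assumption~(iv) of Definition~\ref{defi:game} guarantees that $a\mapsto c(a,b)$ is bounded below, so $g(b)$ is well-defined in $(-\infty,+\infty]$ for each $b\in\bb$, and ${\hat{c}}^\sharp(\pi^\A)=\sup_{b\in\bb}g(b)$ belongs to $(-\infty,+\infty]$.

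For the inequality ${\hat{c}}^\sharp(\pi^\A)\le\sup_{\pi_*^\bb\in\mathbb{P}^{S}_{\pi^\A}(\bb)}{\hat{c}}(\pi^\A,\pi_*^\bb)$ I would identify each pure action $b\in\bb$ with the Dirac measure concentrated at $b$. By Remark~\ref{rem:new1} such measures lie in $\mathbb{P}^{fs}(\bb)\subset\mathbb{P}^{S}_{\pi^\A}(\bb)$, so the supremum defining ${\hat{c}}^\sharp(\pi^\A)$ runs over a subfamily of $\mathbb{P}^{S}_{\pi^\A}(\bb)$ and the inequality follows at once.

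The core of the proof is the reverse inequality, and it rests on the identity
\[
{\hat{c}}(\pi^\A,\pi_*^\bb)=\int_\bb g(b)\,\pi_*^\bb(db),\qquad \pi_*^\bb\in\mathbb{P}^{S}_{\pi^\A}(\bb).
\]
Granting this identity, the pointwise bound $g(b)\le{\hat{c}}^\sharp(\pi^\A)$ integrates (using ${\hat{c}}^\sharp(\pi^\A)>-\infty$) to ${\hat{c}}(\pi^\A,\pi_*^\bb)\le{\hat{c}}^\sharp(\pi^\A)$, and taking the supremum over $\pi_*^\bb\in\mathbb{P}^{S}_{\pi^\A}(\bb)$ yields the missing inequality. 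To prove the identity I would apply Tonelli's theorem separately to the nonnegative integrands $c^+$ and $-c^-$; this justifies interchanging the order of integration in ${\hat{c}}^\oplus(\pi^\A,\pi_*^\bb)$ and ${\hat{c}}^\ominus(\pi^\A,\pi_*^\bb)$ and also yields measurability of the partial integrals $g^+(b)=\int_\A c^+(a,b)\pi^\A(da)$ and $g^-(b)=\int_\A c^-(a,b)\pi^\A(da)$, so that $\int_\bb g^+\,d\pi_*^\bb={\hat{c}}^\oplus$ and $\int_\bb g^-\,d\pi_*^\bb={\hat{c}}^\ominus$. Since $\pi_*^\bb\in\mathbb{P}^{S}_{\pi^\A}(\bb)$ means ${\hat{c}}(\pi^\A,\pi_*^\bb)$ is well-defined, at least one of ${\hat{c}}^\oplus$, ${\hat{c}}^\ominus$ is finite, which lets me split $g=g^++g^-$, integrate each piece, and recombine to obtain the identity. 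The careful handling of the positive and negative parts, together with the fact that $c$ may be unbounded above while every $g(b)$ remains well-defined by~(iv), is the step I expect to be the main obstacle; the remainder is formal.

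For \eqref{eq:prop:mainIb} I would invoke Remark~\ref{rem:sim} and apply the already-proved \eqref{eq:prop:mainIa} to the game $\{\bb,\A,-c^{\A\leftrightarrow\bb}\}$. Its expected payoff is $-{\hat{c}}(\pi_*^\A,\pi^\bb)$, so its worst-loss function at $\pi^\bb$ equals $-{\hat{c}}^\flat(\pi^\bb)$, and well-definedness is unaffected by the sign change, whence the corresponding set of safe opposing strategies is again $\mathbb{P}^{S}_{\pi^\bb}(\A)$. Equality \eqref{eq:prop:mainIa} for this game reads $-{\hat{c}}^\flat(\pi^\bb)=\sup_{\pi_*^\A\in\mathbb{P}^{S}_{\pi^\bb}(\A)}\big(-{\hat{c}}(\pi_*^\A,\pi^\bb)\big)$, which becomes \eqref{eq:prop:mainIb} upon multiplication by $-1$.
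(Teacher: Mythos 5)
Your proposal is correct and follows essentially the same route as the paper's proof: the $\ge$ inequality via Dirac measures lying in $\mathbb{P}^{fs}(\bb)\subset\mathbb{P}^{S}_{\pi^\A}(\bb)$, the $\le$ inequality via the Fubini--Tonelli identity ${\hat{c}}(\pi^\A,\pi_*^\bb)=\int_\bb {\hat{c}}(\pi^\A,b)\,\pi_*^\bb(db)$ for safe $\pi_*^\bb$, and the reduction of \eqref{eq:prop:mainIb} to \eqref{eq:prop:mainIa} through the symmetric game $\{\bb,\A,-c^{\A\leftrightarrow\bb}\}$ of Remark~\ref{rem:sim}. The only difference is that you spell out the positive/negative-part bookkeeping behind the Fubini--Tonelli step, which the paper invokes in a single line.
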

\begin{proof}
It is sufficient to establish equality (\ref{eq:prop:mainIa}) for each $\pi^\A\in\mathbb{P}(\A).$ Indeed,
 equality (\ref{eq:prop:mainIa}), being applied to the game $\{\bb,\A,-c^{\A\leftrightarrow\bb}\},$ where the function $c^{\A\leftrightarrow\bb}$ is defined in Remark~\ref{rem:sim}, implies equality (\ref{eq:prop:mainIb}). 

Let us prove that equality (\ref{eq:prop:mainIa}) holds for each $\pi^\A\in \mathbb{P}(\A).$ Fix an arbitrary $\pi^\A\in\mathbb{P}(\A).$

According to Remark~\ref{rem:new1}, the expected
payoff ${\hat{c}}(\pi^\A,b)$ to  Player II is well-defined for each $b\in\bb.$ Then the inequality \[\sup_{\pi_*^\bb\in \mathbb{P}^S_{\pi^\A}(\bb)} {\hat{c}}(\pi^\A,\pi_*^\bb)\ge\sup_{b\in \bb} {\hat{c}}(\pi^\A,b)={\hat{c}}^{\sharp}(\pi^\A)\] holds because
each pure strategy for Player~II
can be interpreted as the mixed strategy concentrated in a point.
Now let us prove that
\[
{\hat{c}}^{\sharp}(\pi^\A)\le\sup_{\pi_*^\bb\in\mathbb{P}^S_{\pi^\A}(\bb)} {\hat{c}}(\pi^\A,\pi_*^\bb ).
\]
If $\sup_{b\in \bb} {\hat{c}}(\pi^\A,b)=+\infty,$ then the inequality
\begin{equation}\label{eq:th25}
\sup_{\pi_*^\bb\in \mathbb{P}^S_{\pi^\A}(\bb)} {\hat{c}}(\pi^\A,\pi_*^\bb)\le\sup_{b\in \bb} {\hat{c}}(\pi^\A,b)
\end{equation}
obviously holds. Let $\sup_{b\in \bb} {\hat{c}}(\pi^\A,b)<+\infty.$
Inequality (\ref{eq:th25}) holds if and only if
\begin{equation}\label{eq:the251}
{\hat{c}}(\pi^\A,\pi_*^\bb)\le\sup_{b\in \bb} {\hat{c}}(\pi^\A,b)
\end{equation}
for each $\pi_*^\bb\in \mathbb{P}^S_{\pi^\A}(\bb).$
The rest of the proof establishes inequality (\ref{eq:the251}).

Let us fix an arbitrary $\pi_*^\bb\in \mathbb{P}^S_{\pi^\A}(\bb).$ Since either ${\hat{c}}^\ominus(\pi^\A,\pi_*^\bb)>-\infty$ or ${\hat{c}}^\oplus(\pi^\A,\pi_*^\bb)<+\infty,$ we have that the Fubini-Tonelli theorem implies
\[
{\hat{c}}(\pi^\A,\pi_*^\bb)=\int_\bb {\hat{c}}(\pi^\A,b)\pi_*^\bb(db),
\]
which implies (\ref{eq:the251}). Inequality (\ref{eq:th25}) is proved.
\qed \end{proof}

\begin{remark}\label{rem:main2}
{\rm According to (\ref{eq:maxmin}) and assumptions (iv) and (v) in  Definition~\ref{defi:game} of the game $\{\A,\bb,c\}$  (see also Remarks~\ref{rem:new1} and \ref{rem:new2} and Theorem~\ref{cor:mainI}), 
the  inequality
\begin{equation}\label{eq:g}
{\hat{c}}^{\flat}(\pi^\bb)\le {\hat{c}}^{\sharp}(\pi^\A)
\end{equation}
holds for all $\pi^\A\in\mathbb{P}(\A)$ and for all $\pi^\bb\in\mathbb{P}^S(\bb).$  Indeed, for $\pi^\bb\in\mathbb{P}^S(\bb)$ and for $\pi^\A\in\mathbb{P}(\A),$
\[
{\hat{c}}^{\flat}(\pi^\bb)=\inf_{\pi_*^\A\in \mathbb{P}(\A)} {\hat{c}}(\pi_*^\A,\pi^\bb)\le {\hat{c}}(\pi^\A,\pi^\bb)\le \sup_{\pi_*^\bb\in\mathbb{P}^S_{\pi^\A}(\bb)} {\hat{c}}(\pi^\A,\pi_*^\bb )
= {\hat{c}}^{\sharp}(\pi^\A).
\]
Since it is not clear whether inequality \eqref{eq:g} holds for $\pi^\bb\in\mathbb{P}^U(\bb),$ the following definition introduces the lopsided value (the value in the asymmetric form).
}
\end{remark}

\begin{definition}\label{Def5.8}
If the   equality
\begin{equation}\label{eq:valval}
\sup_{\pi^\bb\in\mathbb{P}^S(\bb)}{\hat{c}}^{\flat}(\pi^\bb)=\inf_{\pi^\A\in\mathbb{P}(\A)} {\hat{c}}^{\sharp}(\pi^\A)\ (:=v)
\end{equation}
holds, then we say that $v$ is the \textit{lopsided value of the game} $\{\A,\bb,c\}.$ 
\end{definition}
\begin{remark}\label{rem:lopsided}
{\rm
The lopsided value coincides with the classical definition of the value if $\mathbb{P}^S(\bb)=\mathbb{P}(\bb).$  In this case, \eqref{eq:valval} becomes
\begin{equation}\label{rem:lopsidedclas}
\sup_{\pi^\bb\in\mathbb{P}(\bb)}{\hat{c}}^{\flat}(\pi^\bb)=\inf_{\pi^\A\in\mathbb{P}(\A)} {\hat{c}}^{\sharp}(\pi^\A).
\end{equation}
For example, if  $c$ is bounded either from below or from  above on $\A\times\bb,$ then $\mathbb{P}^S(\bb)=\mathbb{P}(\bb).$
If \eqref{rem:lopsidedclas} holds, we shall omit the term ``lopsided.''
}
\end{remark}

\begin{remark}\label{rem:lopsidedassym}
Infsup equality \eqref{eq:valval} is asymmetric.  The main obstacle for writing it in the symmetric form \eqref{rem:lopsidedclas} is that it is not clear why inequality \eqref{eq:g} holds for  all $\pi^\A\in\mathbb{P}(\A)$ and  $\pi^\bb\in\mathbb{P}(\bb).$ Equality \eqref{eq:valval} can be linked to general forms of infsup equalities, which are asymmetric; see Proposition I.1.9 in Mertens et al.~\cite{MSZ}.
This is the reason why we use the term lopsided value.
\end{remark}

In addition to the sets $\mathbb{P}^\sharp_{\alpha}(\A)$ and $\mathbb{P}^\flat_{\beta}(\bb)$ defined in \eqref{eq:maxmin}, let us introduce 
\[
\begin{aligned}
&\mathbb{P}^\sharp_{<\alpha}(\A):=\{\pi^\A\in \mathbb{P}(\A)\,:\, {\hat{c}}^\sharp(\pi^\A)< \alpha\},\quad \alpha\in\R,\\
&\mathbb{P}^\flat_{>\beta}(\bb):=\{\pi^\bb\in \mathbb{P}(\bb)\,:\, {\hat{c}}^\flat(\pi^\bb)> \beta\},\quad \beta\in\R.
\end{aligned}
\]

\begin{lemma}\label{lem:con}
Let $\{\A,\bb, c\}$ be a  two-person zero-sum game  introduced in Definition~\ref{defi:game}. Then the following statements hold:
\begin{itemize}
\item[{\rm(a)}] the function ${\hat{c}}^{\sharp}$ is convex on $\mathbb{P}(\A);$
\item[{\rm(b)}] the function ${\hat{c}}^{\flat}$ is concave on $\mathbb{P}(\bb);$
\item[{\rm(c)}] the sets $\mathbb{P}^\sharp_\alpha(\A),$ $\mathbb{P}^\sharp_{<\alpha}(\A),$ $\mathbb{P}^\flat_\beta(\bb),$ and $\mathbb{P}^\flat_{>\beta}(\bb)$ are convex for all $\alpha,\beta\in\R;$
\end{itemize}
\end{lemma}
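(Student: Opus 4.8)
The plan is to reduce all three statements to a single observation: for each fixed pure strategy $b\in\bb$, the section $\pi^\A\mapsto \hat{c}(\pi^\A,b)=\int_\A c(a,b)\,\pi^\A(da)$ is an \emph{affine} function of $\pi^\A$ with values in $(-\infty,+\infty]$. First I would record that this section is well-defined and never equals $-\infty$: assumption~(iv) in Definition~\ref{defi:game} gives $\hat{c}^\ominus(\pi^\A,b)>-\infty$ (cf. Remark~\ref{rem:new1}), so no $\infty-\infty$ ambiguity can arise. To verify affineness, I would write $c(\cdot,b)=(c(\cdot,b)+M_b)-M_b$ with $M_b$ chosen so that $c(\cdot,b)+M_b\ge 0$, and apply linearity of the integral in the measure to the nonnegative part $c(\cdot,b)+M_b$, together with $\int M_b\,d\mu=M_b$ for every probability measure $\mu$. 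This yields, for $\lambda\in[0,1]$,
\[
\hat{c}(\lambda\pi_1^\A+(1-\lambda)\pi_2^\A,b)=\lambda\,\hat{c}(\pi_1^\A,b)+(1-\lambda)\,\hat{c}(\pi_2^\A,b).
\]

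For statement~(a), I would then use that $\hat{c}^\sharp=\sup_{b\in\bb}\hat{c}(\cdot,b)$ is a pointwise supremum of affine functions. Since each section is $>-\infty$, the elementary inequality $\sup_b(\lambda x_b+(1-\lambda)y_b)\le \lambda\sup_b x_b+(1-\lambda)\sup_b y_b$ is valid in $\R$ (all sums on the right are well-defined, possibly $+\infty$); combining it with the displayed affine identity gives $\hat{c}^\sharp(\lambda\pi_1^\A+(1-\lambda)\pi_2^\A)\le \lambda\hat{c}^\sharp(\pi_1^\A)+(1-\lambda)\hat{c}^\sharp(\pi_2^\A)$, which is convexity.

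For statement~(b), the cleanest route is to invoke Remark~\ref{rem:sim}: applying part~(a) to the companion game $\{\bb,\A,-c^{\A\leftrightarrow\bb}\}$ shows that $\pi^\bb\mapsto \sup_{a\in\A}\bigl(-\hat{c}(a,\pi^\bb)\bigr)=-\hat{c}^\flat(\pi^\bb)$ is convex, hence $\hat{c}^\flat$ is concave. (Alternatively one repeats the argument directly, now using assumption~(v) so that $\hat{c}(a,\pi^\bb)<+\infty$ and $\hat{c}^\flat=\inf_{a}\hat{c}(a,\cdot)$ is an infimum of affine functions.) Finally, statement~(c) is immediate from (a) and (b): $\mathbb{P}^\sharp_\alpha(\A)$ and $\mathbb{P}^\sharp_{<\alpha}(\A)$ are the sublevel sets $\{\hat{c}^\sharp\le\alpha\}$ and $\{\hat{c}^\sharp<\alpha\}$ of the convex function $\hat{c}^\sharp$, while $\mathbb{P}^\flat_\beta(\bb)$ and $\mathbb{P}^\flat_{>\beta}(\bb)$ are the superlevel sets $\{\hat{c}^\flat\ge\beta\}$ and $\{\hat{c}^\flat>\beta\}$ of the concave function $\hat{c}^\flat$; all four are therefore convex.

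I expect the only genuine subtlety---the main obstacle---to be the bookkeeping with extended-real arithmetic: one must keep every $b$-section strictly above $-\infty$ (via assumption~(iv)), respectively every $a$-section strictly below $+\infty$ (via assumption~(v)), so that the affine identity and the subadditivity of the supremum never produce an undefined $\infty-\infty$. Everything else is routine convex analysis.
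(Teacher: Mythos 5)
Your proposal is correct and follows essentially the same route as the paper's proof: statement (a) via the observation that $\hat{c}^\sharp$ is a pointwise supremum of the affine sections $\pi^\A\mapsto\hat{c}(\pi^\A,b)$ (the paper handles the $+\infty$ case by an explicit split where you use extended-real arithmetic uniformly), statement (b) by applying (a) to the companion game $\{\bb,\A,-c^{\A\leftrightarrow\bb}\}$ exactly as in Remark~\ref{rem:sim}, and statement (c) as level sets of the convex/concave functions. Your explicit verification of affineness via the bounded-below decomposition $c(\cdot,b)+M_b\ge 0$ makes precise a step the paper leaves implicit in its display (\ref{eq:eq}), but the argument is the same.
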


\begin{proof}

Let us prove statement (a). Indeed, let $\pi_1^\A,\pi_2^\A\in \mathbb{P}(\A)$ and $\alpha\in (0,1).$ If either ${\hat{c}}^{\sharp}(\pi_1^\A)=+\infty$ or  ${\hat{c}}^{\sharp}(\pi_2^\A)=+\infty,$ then ${\hat{c}}^{\sharp}(\alpha\pi_1^\A+(1-\alpha)\pi_2^\A)\le \alpha{\hat{c}}^{\sharp}(\pi_1^\A)+(1-\alpha){\hat{c}}^{\sharp}(\pi_2^\A).$ Otherwise,  $\pi_1^\A,\pi_2^\A\in \mathbb{P}_{<+\infty}^\sharp(\A)$ and
\begin{equation}\label{eq:eq}
\begin{aligned}
&\alpha{\hat{c}}^{\sharp}(\pi_1^\A)+(1-\alpha){\hat{c}}^{\sharp}(\pi_2^\A)= \alpha\sup_{b\in\bb} \hat c(\pi_1^\A,b)+(1-\alpha)\sup_{b\in\bb} \hat c(\pi_2^\A,b)\\ &\ge \sup_{b\in\bb} \hat c(\alpha\pi_1^\A+(1-\alpha)\pi_2^\A,b)={\hat{c}}^{\sharp}(\alpha\pi_1^\A+(1-\alpha)\pi_2^\A).
\end{aligned}
\end{equation}
Since $\pi_1^\A,\pi_2^\A\in \mathbb{P}(\A)$ and $\alpha\in (0,1)$ are arbitrary, then (\ref{eq:eq}) implies that the worst-loss function ${\hat{c}}^{\sharp}$ is convex on $\mathbb{P}(\A).$ Statement (a) is proved.

Statement (b) follows from statement (a) applied to
$\{\bb,\A,-c^{\A\leftrightarrow\bb}\},$ where $c^{\A\leftrightarrow\bb}(b,a):=c(a,b)$ for each $a\in\A$ and $b\in\bb.$ Statement (c) follows from statements (a) and (b).
\qed \end{proof}

\subsection{The Existence of a Lopsided Value}\label{sub:exval}

The following Theorem~\ref{th:exval} provides sufficient conditions for the existence of a lopsided value for a   two-person zero-sum game with
possibly noncompact action sets and unbounded payoffs and describes the property of the solution set for one of the player under these conditions.  For well-defined payoff
functions, the proof of the existence of the value is usually based on Sion's theorem (Mertens et al.~\cite[Theorem I.1.1] {MSZ}) that requires that at least one   of the decision sets is compact.  In our situation, both decision sets may not be compact.  In addition, the payoff function $c$ may be unbounded above and below, and therefore the payoff function $\hat c$ may be undefined for some pairs of mixed strategies.  Because of these reasons, our proof of the existence of the lopsided value does not use Sion's theorem. In general,  a game on the unit square with bounded measurable payoffs  may not have  a value; see Yanovskaya~\cite[p. 527]{Yan}, and the references to counterexamples by Ville, by Wald, and by Sion and by Wolfe cited there. Therefore, some conditions on continuity of  payoff functions are needed, and Theorem~\ref{th:exval} requires mild assumptions (i) and (ii). 

\begin{theorem}\label{th:exval}
Let a  two-person zero-sum game $\{\A,\bb, c \}$  introduced in Definition~\ref{defi:game} satisfy the following assumptions:
\begin{itemize}
\item[{\rm(i)}] for each $b\in \bb$ the function $a\mapsto c(a,b)$ is lower semi-continuous;
\item[{\rm(ii)}] there exists $b_0\in \bb$ such that the function $a\mapsto c(a,b_0)$ is inf-compact on $\A.$
\end{itemize}
Then the game $\{\A,\bb, c \}$ has a lopsided value $v,$ that is, equality   \eqref{eq:valval} holds,  and
\[\sup_{\pi^\bb\in\mathbb{P}^S(\bb)}{\hat{c}}^{\flat}(\pi^\bb)=\sup_{\pi^\bb\in\mathbb{P}^{fs}(\bb)}{\hat{c}}^{\flat}(\pi^\bb).
\]
%
Moreover, the set $\mathbb{P}^\sharp_{v}(\A)$ is a
nonempty convex compact subset of $\mathbb{P}(\A).$%
\end{theorem}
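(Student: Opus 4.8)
The plan is to treat the two players asymmetrically, exploiting that finite-support strategies of Player~II keep the expected payoff well-defined. Write ${\hat c}(\pi^\A,b)=\int_\A c(a,b)\pi^\A(da)$, which by assumption~(iv) is well-defined and takes values in $\mathbb{R}\cup\{+\infty\}$ for every $\pi^\A\in\mathbb{P}(\A)$ and $b\in\bb$. I first record the easy inequality: Remark~\ref{rem:main2} (inequality~\eqref{eq:g}) gives $\sup_{\pi^\bb\in\mathbb{P}^S(\bb)}{\hat c}^\flat(\pi^\bb)\le\inf_{\pi^\A\in\mathbb{P}(\A)}{\hat c}^\sharp(\pi^\A)=:v$, and since $\mathbb{P}^{fs}(\bb)\subset\mathbb{P}^S(\bb)$ (Remark~\ref{rem:uncer}), it remains only to prove the reverse estimate $\sup_{\pi^\bb\in\mathbb{P}^{fs}(\bb)}{\hat c}^\flat(\pi^\bb)\ge v$. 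Indeed, this single inequality sandwiches $v\le\sup_{\mathbb{P}^{fs}(\bb)}{\hat c}^\flat\le\sup_{\mathbb{P}^S(\bb)}{\hat c}^\flat\le v$ and thereby yields both displayed equalities of the theorem (equality~\eqref{eq:valval} and the finite-support reduction) at once.

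The first main step is to show that ${\hat c}^\sharp$ is inf-compact on $\mathbb{P}(\A)$. For each fixed $b\in\bb$ the map $\pi^\A\mapsto{\hat c}(\pi^\A,b)$ is lower semi-continuous by Lemma~\ref{lem:lsc}, applied to the bounded-below lower semi-continuous function $a\mapsto c(a,b)$ with a one-point first factor; hence ${\hat c}^\sharp=\sup_{b\in\bb}{\hat c}(\,\cdot\,,b)$, a supremum of lower semi-continuous functions, is lower semi-continuous. Moreover, assumption~(ii) and Theorem~\ref{th:inf-comp} make $g(\pi^\A):={\hat c}(\pi^\A,b_0)$ inf-compact on $\mathbb{P}(\A)$; since ${\hat c}^\sharp\ge g$, each level set $\mathbb{P}^\sharp_\alpha(\A)=\mathcal{D}_{{\hat c}^\sharp}(\alpha;\mathbb{P}(\A))$ is a closed subset (by lower semi-continuity) of the compact set $\mathcal{D}_{g}(\alpha;\mathbb{P}(\A))$, hence compact. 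Because ${\hat c}^\sharp\ge g\ge\inf_{a\in\A}c(a,b_0)>-\infty$ and ${\hat c}^\sharp(\delta_{a_0})=\sup_{b\in\bb}c(a_0,b)<+\infty$ for any $a_0\in\A$ by assumption~(v), the number $v$ is finite; inf-compactness then forces the infimum to be attained, so $\mathbb{P}^\sharp_v(\A)$ is nonempty, it is compact by inf-compactness, and it is convex by Lemma~\ref{lem:con}(c). This already establishes the last assertion of the theorem.

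The crux is the reverse inequality, and the plan is to avoid Sion's theorem by truncating Player~II to finitely many pure strategies and separating in a finite-dimensional space. For a finite set $B=\{b_0,b_1,\dots,b_n\}\subset\bb$ set $v_B:=\inf_{\pi^\A\in\mathbb{P}(\A)}\max_{b\in B}{\hat c}(\pi^\A,b)$; since $\max_{b\in B}{\hat c}(\,\cdot\,,b)$ is lower semi-continuous and dominates the inf-compact $g$, its infimum is attained, $v_B\in\mathbb{R}$, and $v_B\le v$. I claim $\sup_B v_B=v$. If not, fix $v'$ with $\sup_B v_B\le v'<v$; then for each finite $B$ the set $K_B:=\{\pi^\A:{\hat c}(\pi^\A,b)\le v'\ \forall b\in B\}$ is nonempty (it contains a minimizer of $\max_{b\in B}{\hat c}(\,\cdot\,,b)$), closed, and contained in the compact $\mathcal{D}_g(v';\mathbb{P}(\A))$, hence compact. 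As $K_{B_1}\cap\dots\cap K_{B_m}=K_{B_1\cup\dots\cup B_m}\ne\emptyset$, the finite-intersection property yields $\pi^*\in\bigcap_B K_B$, whence ${\hat c}^\sharp(\pi^*)\le v'<v$, contradicting the definition of $v$.

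Finally, for each fixed finite $B$ I will realize $v_B$ as a maximin over mixtures of $B$. Consider the convex, upward-closed set $V:=\{y\in\mathbb{R}^{n+1}:\exists\,\pi^\A\in\mathbb{P}(\A),\ {\hat c}(\pi^\A,b_i)\le y_i,\ i=0,\dots,n\}$, which is nonempty (it contains $(c(a_0,b_i))_i$) and omits the point $(v_B-\varepsilon)\mathbf{1}$ for every $\varepsilon>0$, by definition of $v_B$. Separating this point from $V$ in $\mathbb{R}^{n+1}$ produces a nonzero vector $q^*$; upward-closedness of $V$ forces $q^*\ge 0$, and after normalization $q^*$ becomes a probability vector, i.e. a measure $\pi^\bb_{q^*}=\sum_i q^*_i\delta_{b_i}\in\mathbb{P}^{fs}(\bb)$, with $\sum_i q^*_i c(a,b_i)\ge v_B-\varepsilon$ for every pure $a\in\A$ (each $(c(a,b_i))_i$ lies in $V$). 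Hence ${\hat c}^\flat(\pi^\bb_{q^*})=\inf_{a\in\A}\sum_i q^*_i c(a,b_i)\ge v_B-\varepsilon$, so $\sup_{\pi^\bb\in\mathbb{P}^{fs}(\bb)}{\hat c}^\flat(\pi^\bb)\ge v_B$; letting $B$ exhaust $\bb$ and using $\sup_B v_B=v$ gives $\sup_{\pi^\bb\in\mathbb{P}^{fs}(\bb)}{\hat c}^\flat(\pi^\bb)\ge v$, closing the argument. The main obstacle is precisely this last step: proving strong duality for payoffs that are unbounded and possibly $+\infty$-valued, with no compactness hypothesis on Player~I's strategy set; the finite-$B$ truncation (which keeps ${\hat c}$ defined) combined with finite-dimensional separation (which replaces Sion's theorem) is exactly what resolves it.
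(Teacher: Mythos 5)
Your proposal is correct, but it takes a genuinely different route from the paper at the key step. Both arguments share the same skeleton: the easy inequality $\sup_{\pi^\bb\in\mathbb{P}^S(\bb)}\hat{c}^{\flat}(\pi^\bb)\le\inf_{\pi^\A\in\mathbb{P}(\A)}\hat{c}^{\sharp}(\pi^\A)$ from \eqref{eq:g}, a reduction to finite subsets, and a finite-intersection-property argument whose compactness is supplied by the inf-compactness of $\hat{c}(\,\cdot\,,b_0)$ on $\mathbb{P}(\A)$ (Theorem~\ref{th:inf-comp}). The paper, however, works from the outset with finite families of finite-support \emph{mixed} strategies of Player~II and invokes the Aubin--Ekeland theorem (Theorem~\ref{teor:obek}) on $\mathbb{P}^\sharp_{<+\infty}(\A)\times\mathbb{P}^{fs}(\bb)$ to convert the sup-inf into its finite-family form, and only then runs the intersection argument, extracting the nonemptiness of $\mathbb{P}^\sharp_{v}(\A)$ from the strategy $\pi^\A_*$ so constructed together with the level-set identity \eqref{endofproofT18EF}. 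You reverse the order and change the tool: you first run the intersection argument over finite sets $B\ni b_0$ of \emph{pure} strategies of Player~II to get $\sup_B v_B=v$, and then prove the required finite-stage duality $v_B-\varepsilon\le\sup_{\pi^\bb\in\mathbb{P}^{fs}(\bb)}\hat{c}^{\flat}(\pi^\bb)$ by hand, via proper separation in $\mathbb{R}^{n+1}$ of the point $(v_B-\varepsilon)\mathbf{1}$ from the convex, upward-closed set $V$ (the convexity of $V$ being exactly the affineness of $\hat{c}(\,\cdot\,,b)$ in $\pi^\A$, and upward closedness forcing the separating vector to be a nonnegative, hence normalizable, weight vector). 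In effect you reprove, in the semi-infinite setting where Player~I's strategy set is arbitrary convex and Player~II has finitely many actions, the minimax statement that the paper imports as Theorem~\ref{teor:obek}; this makes your proof self-contained at the cost of redoing a classical duality argument. Your treatment of the final assertion is also more direct: you obtain nonemptiness, compactness, and convexity of $\mathbb{P}^\sharp_{v}(\A)$ straight from the inf-compactness of $\hat{c}^{\sharp}$ on $\mathbb{P}(\A)$ (sup of lower semi-continuous functions dominating the inf-compact $\hat{c}(\,\cdot\,,b_0)$), rather than from the constructed minimizer. If you write this up, the only detail worth spelling out is the separation step itself: since $V$ is nonempty, convex, upward closed, and omits $(v_B-\varepsilon)\mathbf{1}$, the standard finite-dimensional separation of a point from a convex set not containing it yields a nonzero $q^*$ with $\langle q^*,y\rangle\ge\langle q^*,(v_B-\varepsilon)\mathbf{1}\rangle$ for all $y\in V$, and upward closedness gives $q^*\ge 0$; this is routine but is the load-bearing step replacing the paper's citation.
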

Let $\F(S)$ be the family of all finite subsets of a set $S.$ The proof of Theorem~\ref{th:exval} uses the following theorem.
\begin{theorem}{\rm(Aubin and Ekeland~\cite[Theorem~6.2.2]{ObEk})}\label{teor:obek}
Let $A$ and $B$ be nonempty convex subsets of vector spaces and $f:A \times B\mapsto \mathbb{R}$ be a  function such that $a\mapsto f(a,b)$ is convex for each $b\in B$ and $b\mapsto f(a,b)$ is concave for each $a\in A.$ Then 
\begin{equation}\label{eq:obaux}
 \sup_{b\in B} \inf_{a\in A } f(a,b)= \sup_{F\in \F (B) } \inf_{a\in A }\max_{b\in F} f (a,b).
\end{equation}
\end{theorem}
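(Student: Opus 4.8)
The plan is to prove separately the two inequalities between $L:=\sup_{b\in B}\inf_{a\in A}f(a,b)$ and $R:=\sup_{F\in\F(B)}\inf_{a\in A}\max_{b\in F}f(a,b)$. The inequality $L\le R$ is immediate: for every $b\in B$ the singleton $\{b\}\in\F(B)$ satisfies $\max_{b'\in\{b\}}f(a,b')=f(a,b)$, so $\inf_{a\in A}f(a,b)\le R$, and taking the supremum over $b\in B$ gives $L\le R$. The entire difficulty is the reverse inequality $R\le L$, which reduces to showing that
\[
\inf_{a\in A}\max_{1\le i\le n}f(a,b_i)\le L
\]
for every finite collection $\{b_1,\dots,b_n\}\subseteq B$; taking the supremum over such collections then yields $R\le L$.

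So I would fix $b_1,\dots,b_n\in B$ and write $g_i(a):=f(a,b_i)$, each a convex function on the convex set $A$. Let $\Delta_n:=\{\lambda\in\mathbb{R}^n\,:\,\lambda_i\ge 0,\ \sum_{i=1}^n\lambda_i=1\}$ be the standard simplex. The first step uses the hypotheses on $B$ to control the scalarized values: for each $\lambda\in\Delta_n$ the point $b_\lambda:=\sum_{i=1}^n\lambda_i b_i$ lies in $B$ by convexity of $B$, and concavity of $f(a,\cdot)$ gives $f(a,b_\lambda)\ge\sum_{i=1}^n\lambda_i g_i(a)$ for every $a\in A$. Taking the infimum over $a$ and using $b_\lambda\in B$,
\[
\inf_{a\in A}\sum_{i=1}^n\lambda_i g_i(a)\le\inf_{a\in A}f(a,b_\lambda)\le L .
\]
Hence the quantity $c:=\sup_{\lambda\in\Delta_n}\inf_{a\in A}\sum_{i=1}^n\lambda_i g_i(a)$ satisfies $c\le L$, and it remains only to prove the finite minimax inequality $\inf_{a\in A}\max_i g_i(a)\le c$.

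The crux, and the step I expect to be the main obstacle, is precisely this last inequality, which must be obtained with no topological assumption whatsoever on $A$ (the theorem endows $A$ with none), so the usual compact minimax theorems do not apply. I would argue by contradiction and separation in $\mathbb{R}^n$. Suppose $m:=\inf_{a\in A}\max_i g_i(a)>c$; here $m$ is finite or $-\infty$ (it is finite whenever $A\ne\emptyset$ gives a finite upper bound), and the case $m=-\infty$ is trivial, so take $m$ finite. Consider the set
\[
K:=\{t\in\mathbb{R}^n\,:\,\text{there is } a\in A \text{ with } g_i(a)<t_i \text{ for all } i\},
\]
which is nonempty, open, upward closed, and convex (by convexity of the $g_i$ and of $A$, the midpoint in $A$ of two representing points represents the midpoint of the corresponding $t$'s). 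The point $q:=(m,\dots,m)$ does not belong to $K$, since $q\in K$ would produce an $a$ with $\max_i g_i(a)<m$, contradicting the definition of $m$. Separating $q$ from the open convex set $K$ yields, after choosing the orientation, a vector $\alpha\in\mathbb{R}^n\setminus\{0\}$ with $\langle\alpha,t\rangle\ge\langle\alpha,q\rangle$ for all $t\in K$; upward closedness of $K$ forces $\alpha_i\ge 0$ for every $i$, so after normalization $\alpha=\lambda\in\Delta_n$. Since $(g_1(a)+\varepsilon,\dots,g_n(a)+\varepsilon)\in K$ for every $a\in A$ and $\varepsilon>0$, letting $\varepsilon\downarrow 0$ gives $\sum_i\lambda_i g_i(a)\ge m$ for all $a$, whence $\inf_{a\in A}\sum_i\lambda_i g_i(a)\ge m>c$, contradicting the definition of $c$. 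Therefore $m\le c\le L$, which completes the proof. The only nontrivial ingredients are the separating hyperplane theorem in $\mathbb{R}^n$ together with the convexity/concavity structure of $f$: the compactness that standard minimax theorems require is replaced here by the finiteness of $F$, which confines the argument to the finite-dimensional space $\mathbb{R}^n$ and thereby avoids any topology on $A$.
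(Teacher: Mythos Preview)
Your proof is correct. The paper does not supply its own proof of this statement: it is quoted verbatim as Theorem~6.2.2 of Aubin and Ekeland~\cite{ObEk} and invoked as a black box in the proof of Theorem~\ref{th:exval}. Your argument---reducing the nontrivial inequality to a finite-dimensional separation of the point $(m,\dots,m)$ from the open convex ``achievable'' set $K\subset\mathbb{R}^n$, and exploiting the upward closedness of $K$ to force the separating functional into the simplex---is essentially the standard proof one finds in Aubin--Ekeland, so there is no methodological divergence to discuss.

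Two minor points of presentation, neither of which affects correctness. First, when you write ``here $m$ is finite or $-\infty$ (it is finite whenever $A\ne\emptyset$ gives a finite upper bound),'' the parenthetical is a bit muddled: $m=\inf_{a\in A}\max_i g_i(a)$ is automatically $<+\infty$ because $f$ is real-valued and $A\ne\emptyset$, so the only degenerate case is $m=-\infty$, which you correctly dismiss. Second, it may be worth saying explicitly that the normalization $\alpha\mapsto\lambda\in\Delta_n$ is legitimate because $\alpha\ge 0$ componentwise and $\alpha\ne 0$ force $\sum_i\alpha_i>0$. Both are routine, and a reader would fill them in without difficulty.
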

%

\begin{proof}{\it of Theorem~\ref{th:exval}}
%
%
Observe that the following statements hold:
\begin{itemize}
\item[{\rm($i_1$)}] the sets $\mathbb{P}_{<+\infty}^\sharp(\A)$ and $\mathbb{P}^{fs}(\bb)$ are nonempty and convex;
 \item[{\rm($i_2$)}] the function ${\hat{c}}:\mathbb{P}(\A)\times \mathbb{P}^{fs}(\bb)\mapsto\mathbb{R}\cup\{+\infty\}$ is well-defined and affine in each variable;
\item[{\rm($i_3$)}] the function ${\hat{c}}(\,\cdot\,,\pi^\bb):\mathbb{P}(\A)\mapsto \mathbb{R}\cup\{+\infty\}$ is lower semi-continuous for each $\pi^\bb\in \mathbb{P}^{fs}(\bb);$
\item[{\rm($i_4$)}] the function ${\hat{c}}(\,\cdot\,,b_0):\mathbb{P}(\A)\mapsto \mathbb{R}\cup\{+\infty\}$ is inf-compact on $\mathbb{P}(\A);$
\item[{\rm($i_5$)}] the function ${\hat{c}}(\cdot,\cdot)$ takes finite values on $\mathbb{P}_{<+\infty}^\sharp(\A)\times \mathbb{P}^{fs}(\bb).$
\end{itemize}

Let us prove statements {\rm($i_1$)}--{\rm($i_5$)}.

{\rm($i_1$)} 
According to Remark~\ref{rem:new2},
${\hat c}^\sharp(\pi^\A)<+\infty$ for some $\pi^\A\in\mathbb{P}(\A).$   Thus   the set $\mathbb{P}_{<+\infty}^\sharp(\A)$ is  not empty. Lemma~\ref{lem:con}(c) implies that the set $\mathbb{P}_{<+\infty}^\sharp(\A)$ is convex. The set $\mathbb{P}^{fs}(\bb)$ is not empty since the set of pure strategies for Player~II is not empty and each pure strategy for Player~II belongs to $\mathbb{P}^{fs}(\bb).$ The set $\mathbb{P}^{fs}(\bb)$ is convex because a convex combination of two probability measures on $\bb$ with finite supports is a probability measure on $\bb$ with a finite support. Statement {\rm($i_1$)} is proved.

{\rm($i_2$)} Let $\pi^\A\in  \mathbb{P}(\A)$  and $\pi^\bb\in \mathbb{P}^{fs}(\bb).$ The definition of $\mathbb{P}^{fs}(\bb)$ implies the existence of $M=1,2,\ldots,$ $\{\beta^{(m)}\}_{m=1,2,\ldots,M}\subset [0,1],$ and $\{b^{(m)}\}_{m=1,2,\ldots,M}\subset \bb$ such  that $\beta^{(1)}+\beta^{(2)}+\ldots+\beta^{(M)}=1$ and $\pi^\bb(B)=\beta^{(1)}{\bf I}\{b^{(1)}\in B\}+\beta^{(2)}{\bf I}\{b^{(2)}\in B\}+\ldots+\beta^{(M)}{\bf I}\{b^{(M)}\in B\}$ for each $B\in\B(\bb),$ where ${\bf I}\{b\in B\}=1$ whenever  $b\in B$ and ${\bf I}\{b\in B\}=0$ otherwise. Since the function $a\mapsto c(a,b)$ is bounded from below on $\A$
for each $b\in\bb,$ 
\begin{equation}\label{eq:eq1}
\begin{aligned}
{\hat{c}}^\ominus(\pi^\A,\pi^\bb)&= \int_\A \left(\sum_{m=1}^M\beta^{(m)} c^-(a,b^{(m)})\right)\pi^\A(da)\ge \\ &\sum_{m=1}^M\beta^{(m)}\inf_{a\in\A}c^-(a,b^{(m)})>-\infty,
\end{aligned}
\end{equation}
which implies that ${\hat{c}}(\pi^\A,\pi^\bb)$ is well-defined for all  $\pi^\A\in \mathbb{P}(\A)$  and for all $\pi^\bb\in \mathbb{P}^{fs}(\bb).$ This function is affine in each variable on $ \mathbb{P}(\A)\times \mathbb{P}^{fs}(\bb)$ because of the basic properties of the Lebesgue integral.
Statement {\rm($i_2$)} is proved.
%

{\rm($i_3$)} Let us fix an arbitrary $\pi^\bb\in \mathbb{P}^{fs}(\bb).$
As shown in the proof of {\rm($i_2$)}, there exist $M=1,2,\ldots,$ $\{\beta^{(m)}\}_{m=1,2,\ldots,M}\subset [0,1],$ and $\{b^{(m)}\}_{m=1,2,\ldots,M}\subset \bb$ such  that $\beta^{(1)}+\beta^{(2)}+\ldots+\beta^{(M)}=1,$ and $\pi^\bb(B)=\beta^{(1)}{\bf I}\{b^{(1)}\in B\}+\beta^{(2)}{\bf I}\{b^{(2)}\in B\}+\ldots+\beta^{(M)}{\bf I}\{b^{(M)}\in B\}$ for each $B\in\B(\bb).$  Since ${\hat{c}}(\pi^\A,\pi^\bb)=\beta^{(1)}{\hat{c}}(\pi^\A,b^{(1)})+\beta^{(2)}{\hat{c}}(\pi^\A,b^{(2)})+\ldots+\beta^{(M)}{\hat{c}}(\pi^\A,b^{(M)})$ for each $\pi^\A\in\mathbb{P}(\A),$ it is sufficient to prove that the function  ${\hat{c}}(\,\cdot\,,b):\mathbb{P}(\A)\mapsto \mathbb{R}\cup\{+\infty\}$ is lower semi-continuous for each $b\in \bb$ because a convex combination of a finite number of bounded  below lower semi-continuous functions is lower semi-continuous. Lemma~\ref{lem:lsc}, being applied  to $\mathbb{S}_1=\{b\},$ $\mathbb{S}_2=\A,$ and $f(s_1,s_2)=c(s_2,s_1),$ $(s_1,s_2)\in\mathbb{S}_1\times\mathbb{S}_2,$  implies that the function  ${\hat{c}}(\,\cdot\,,b):\mathbb{P}(\A)\mapsto \mathbb{R}\cup\{+\infty\}$ is lower semi-continuous for each $b\in \bb.$ Statement {\rm($i_3$)} is proved.


($i_4$) Assumption (i) and Theorem~\ref{th:inf-comp}, being applied to $\mathbb{S}_1=\{b\},$ $\mathbb{S}_2=\A,$  and $f(s_1,s_2)=c(s_2,s_1),$ $(s_1,s_2)\in\mathbb{S}_1\times\mathbb{S}_2,$ imply that
the function ${\hat{c}}(\,\cdot\,,b_0):\mathbb{P}(\A)\mapsto \mathbb{R}\cup\{+\infty\}$ is inf-compact on $\mathbb{P}(\A).$ Statement {\rm($i_4$)} is proved.

  ($i_5$) Let $\pi^\A\in \mathbb{P}_{<+\infty}^\sharp(\A)$  and $\pi^\bb\in \mathbb{P}^{fs}(\bb).$ Note that
\begin{equation}\label{eq:eq2}
{\hat{c}}(\pi^\A,\pi^\bb)\le{\hat{c}}^{\sharp}(\pi^\A)<+\infty,
\end{equation}
for all $\pi^\A\in \mathbb{P}_{<+\infty}^\sharp(\A)$  and for all $\pi^\bb\in \mathbb{P}^{fs}(\bb),$ where the first inequality follows from 
(\ref{eq:prop:mainIa}) and Remark~\ref{rem:new1}. The second one 
follows from 
$\pi^\mathbb{A}\in\mathbb{P}_{<+\infty}^\sharp(\A).$

Inequalities (\ref{eq:eq1}) and (\ref{eq:eq2}) imply that the function ${\hat{c}}(\cdot,\cdot)$ takes finite values on $\mathbb{P}_{<+\infty}^\sharp(\A)\times \mathbb{P}^{fs}(\bb).$ Statement {\rm($i_5$)} is proved.


Let us prove 
 equality (\ref{eq:valval}). 
In view of inequality (\ref{eq:g}), it is sufficient to prove that
\begin{equation}\label{eq:ob0}
\inf\limits_{\pi^\A\in \mathbb{P}(\A)} {\hat{c}}^{\sharp}(\pi^\A)\le\sup\limits_{\pi^\bb\in \mathbb{P}^S(\bb)}{\hat{c}}^{\flat}(\pi^\bb).
\end{equation}
%
We denote the left-hand side of inequality (\ref{eq:ob0}) by $v^\sharp$ and the right-hand side of inequality (\ref{eq:ob0}) by $v^\flat.$
Since $\mathbb{P}^{fs}(\bb)\subset \mathbb{P}^S(\bb)$  (see Remark~\ref{rem:uncer}), 
\begin{equation}\label{eq:ob1}
 \sup_{\pi^\bb\in\mathbb{P}^{fs}(\bb)} {\hat{c}}^\flat(\pi^\bb)\le v^\flat.
\end{equation}
Since $\mathbb{P}^{fs}(\bb)\subset \mathbb{P}^S(\bb)$, formulae (\ref{eq:maxmin}) and (\ref{eq:prop:mainIb}) imply that 
for each $\pi^\bb\in\mathbb{P}^{fs}(\bb)$
\begin{equation}\label{eq:ob2}
  {\hat{c}}^\flat(\pi^\bb)=\inf_{a\in \A} {\hat{c}}(a,\pi^\bb)=\inf_{\pi^\A\in \mathbb{P}(\A)}{\hat{c}}(\pi^\A,\pi^\bb),
\end{equation}
where the second equality follows from $\mathbb{P}_{\pi^\bb}^S(\A)=\mathbb{P}(\A)$ since $\pi^\bb\in\mathbb{P}^S(\bb).$
%
In view of assumption (v) from Definition~\ref{defi:game}, each pure strategy of Player I belongs to $\mathbb{P}_{<+\infty}^\sharp(\A)\subset\mathbb{P}(\A).$ 
Therefore,
 (\ref{eq:ob2}) implies
\begin{equation}\label{eq:ob3EF}
  {\hat{c}}^\flat(\pi^\bb)=\inf_{\pi^\A\in \mathbb{P}_{<+\infty}^\sharp(\A)}{\hat{c}}(\pi^\A,\pi^\bb),
\end{equation}
for each $\pi^\bb\in\mathbb{P}^{fs}(\bb).$
Inequality (\ref{eq:ob1}) and equality \eqref{eq:ob3EF} imply
\begin{equation}\label{eq:ob3}
 \sup_{\pi^\bb\in\mathbb{P}^{fs}(\bb)} \inf_{\pi^\A\in \mathbb{P}_{<+\infty}^\sharp(\A)}{\hat{c}}(\pi^\A,\pi^\bb)\le v^\flat.
\end{equation}
%
%
%
%
%
In view of  properties  ($i_1$), ($i_2$), and ($i_5$),
Theorem~\ref{teor:obek}, with $A=\mathbb{P}_{<+\infty}^\sharp(\A),$ $B=\mathbb{P}^{fs}(\bb),$ and $f={\hat{c}},$ implies
\begin{equation}\label{eq:ob4}
 \sup_{\pi^\bb\in\mathbb{P}^{fs}(\bb)} \inf_{\pi^\A\in \mathbb{P}_{<+\infty}^\sharp(\A)}{\hat{c}}(\pi^\A,\pi^\bb)= \sup_{F\in \F(\mathbb{P}^{fs}(\bb))} \inf_{\pi^\A\in \mathbb{P}_{<+\infty}^\sharp(\A)}\max_{\pi^\bb\in F}{\hat{c}}(\pi^\A,\pi^\bb).
\end{equation}

Let $\F_0(\mathbb{P}^{fs}(\bb))$ denote the family of all finite subsets of $\mathbb{P}^{fs}(\bb)$ containing the pure strategy of Player~II concentrated at the point $b_0\in\bb,$ whose existence is stated in assumption~(ii). 
Since $\mathbb{P}_{<+\infty}^\sharp(\A)\subset\mathbb{P}(\A)$ and $\F_0(\mathbb{P}^{fs}(\bb))\subset \F(\mathbb{P}^{fs}(\bb)),$ 
\begin{equation}\label{eq:ob5}
v^*:=\sup_{F\in \F_0(\mathbb{P}^{fs}(\bb))} \inf_{\pi^\A\in \mathbb{P}(\A)}\max_{\pi^\bb\in F}{\hat{c}}(\pi^\A,\pi^\bb)\le\sup_{F\in \F(\mathbb{P}^{fs}(\bb))} \inf_{\pi^\A\in \mathbb{P}_{<+\infty}^\sharp(\A)}\max_{\pi^\bb\in F}{\hat{c}}(\pi^\A,\pi^\bb).
\end{equation}
Formulae \eqref{eq:ob3}--\eqref{eq:ob5} imply $v^*\le v^\flat.$  Thus,
if
\begin{equation}\label{eq:ob6}
v^\sharp\le v^*,   
\end{equation}
then  inequality (\ref{eq:ob0}) holds. Recall that inequality (\ref{eq:ob0}) implies 
 equality  (\ref{eq:valval}). 

Let us prove  (\ref{eq:ob6}). 
Statements ($i_3$) and ($i_4$) imply that the function\\ $\max_{\pi^\bb\in F}{\hat{c}}(\,\cdot\,,\pi^\bb)$ is inf-compact on $\mathbb{P}(\A)$ for each $F\in \F_0(\mathbb{P}^{fs}(\bb)).$ Therefore, for each $F\in \F_0(\mathbb{P}^{fs}(\bb))$ there exists $\pi^\A_F\in \mathbb{P}(\A)$ such that \[\pi^\A_F=\mbox{arg\,min}_{\pi^\A\in\mathbb{P}(\A)}\max_{\pi^\bb\in F}{\hat{c}}(\pi^\A,\pi^\bb).\] The definition of $v^*$ given in  (\ref{eq:ob5}) implies that $\pi^\A_F\in \cap_{\pi^\bb\in F}\mathcal{D}_{{\hat{c}}(\,\cdot\,,\pi^\bb)}(v^*)$  for each $F\in \F_0(\mathbb{P}^{fs}(\bb)).$ Thus, for each $F\in \F_0(\mathbb{P}^{fs}(\bb)),$
\begin{equation}\label{eq:ob7}
\cap_{\pi^\bb\in F}\mathcal{D}_{{\hat{c}}(\,\cdot\,,\pi^\bb)}(v^*)\ne \emptyset.
\end{equation}

Statement ($i_3$) and Remark~\ref{rem:tildelsc} imply that the set $\mathcal{D}_{{\hat{c}}(\,\cdot\,,\pi^\bb)}(v^*)$ is closed for each $\pi^\bb\in\mathbb{P}^{fs}(\bb).$
Statement ($i_4$) implies that the set  $\mathcal{D}_{{\hat{c}}(\,\cdot\,,b_0)}(v^*)$ is compact. 
As follows from  (\ref{eq:ob7}), the collection $\{\mathcal{D}_{{\hat{c}}(\,\cdot\,,\pi^\bb)}(v^*)\cap \mathcal{D}_{{\hat{c}}(\,\cdot\,,b_0)}(v^*)\}_{\pi^\bb\in\mathbb{P}^{fs}(\bb)}$ of closed subsets of the compact set $\mathcal{D}_{{\hat{c}}(\,\cdot\,,b_0)}(v^*)$ satisfies the finite intersection property. Therefore, this collection 
has a nonempty intersection, that is, there exists $\pi_*^\A\in \mathbb{P}(\A)$ such that $\pi_*^\A\in \cap_{ \pi^\bb\in\mathbb{P}^{fs}(\bb)}\mathcal{D}_{{\hat{c}}(\,\cdot\,,\pi^\bb)}(v^*);$ see e.g., Reed and Simon \cite[p.~98]{SR}. 
Thus   ${\hat{c}}(\pi_*^\A,\pi^\bb)\le v^*$ for all $\pi^\bb\in\mathbb{P}^{fs}(\bb),$ and  therefore
\begin{equation}\label{eq:ob8}
\sup_{\pi^\bb\in\mathbb{P}^{fs}(\bb)}{\hat{c}}(\pi_*^\A,\pi^\bb)\le v^*.
\end{equation}
We note that 
\begin{equation}\label{eq:ob9}
{\hat{c}}^\sharp(\pi^\A_*)=\sup_{b\in\bb}\hat{c}(\pi^\A_*,b) \le \sup_{\pi^\bb\in\mathbb{P}^{fs}(\bb)}{\hat{c}}(\pi_*^\A,\pi^\bb),
\end{equation}
where the equality is the first definition in \eqref{eq:maxmin} and the inequality  holds because each pure strategy of Player~II  
belongs to $\mathbb{P}^{fs}(\bb).$ 

Inequalities (\ref{eq:ob8}), (\ref{eq:ob9}) and the definition of $v^\sharp$ 
imply inequality (\ref{eq:ob6}), which implies inequality (\ref{eq:ob0}). Thus,   equality  (\ref{eq:valval}) holds.

Let us prove that the set $\mathbb{P}^\sharp_{v}(\A)$
 is a nonempty convex compact subset of $\mathbb{P}(\A).$ The nonemptyness of 
 the set $\mathbb{P}^\sharp_{v}(\A)$ follows from (\ref{eq:ob8}) and (\ref{eq:ob9}) because $v^*=v^\sharp=v,$ where $v$ is introduced in Definition~\ref{Def5.8}. 
As follows from the definition of $\mathbb{P}^\sharp_{v}(\A)$ in \eqref{eq:maxmin},
\begin{equation}\label{endofproofT18EF}
\mathbb{P}^\sharp_{v}(\A)=\cap_{b\in \bb}\mathcal{D}_{{\hat{c}}(\,\cdot\,, b)}(v).
\end{equation}
According to properties ($i_2$)--($i_5$), 
 the set $\mathcal{D}_{{\hat{c}}(\,\cdot\,, {b_0})}(v)$ is a convex compact subset of $\mathbb{P}(\A)$ and the set
   $\mathcal{D}_{{\hat{c}}(\,\cdot\,, \pi^{\bb})}(v)$ is a convex closed subset of $\mathbb{P}(\A)$ for each $\pi^\bb\in \mathbb{P}^{fs}(\bb).$
   In particular, the set
   $\mathcal{D}_{{\hat{c}}(\,\cdot\,, b)}(v)$ is a convex closed subset of $\mathbb{P}(\A)$ for each $b\in \bb.$
   Therefore, formula~\eqref{endofproofT18EF} implies that $\mathbb{P}^\sharp_{v}(\A)$
 is a nonempty convex compact subset of $\mathbb{P}(\A).$

 To finish the proof we note that equalities (\ref{eq:ob3EF}) and (\ref{eq:ob4}) and inequalities (\ref{eq:ob5}) and (\ref{eq:ob6}) imply
\[
\inf_{\pi^\A\in\mathbb{P}(\A)} {\hat{c}}^{\sharp}(\pi^\A)\le \sup_{\pi^\bb\in\mathbb{P}^{fs}(\bb)}{\hat{c}}^{\flat}(\pi^\bb).
\]
Therefore, the equality
\[
\sup_{\pi^\bb\in\mathbb{P}^S(\bb)}{\hat{c}}^{\flat}(\pi^\bb)=\sup_{\pi^\bb\in\mathbb{P}^{fs}(\bb)}{\hat{c}}^{\flat}(\pi^\bb).
\]
follows from \eqref{eq:valval} and (\ref{eq:ob1}).
\qed \end{proof}

\begin{corollary}\label{corth:exval}
If a  two-person zero-sum game $\{\A,\bb, c \}$  introduced in Definition~\ref{defi:game} satisfies assumptions (i) and (ii) from Theorem~\ref{th:exval}, then
\begin{equation*}
\sup_{\pi^\bb\in\Delta(\bb)}{\hat{c}}^{\flat}(\pi^\bb)=\inf_{\pi^\A\in\mathbb{P}(\A)} {\hat{c}}^{\sharp}(\pi^\A)
\end{equation*}
for each $\Delta(\bb)\subset \mathbb{P}(\bb)$ such that $\mathbb{P}^{fs}(\bb)\subset\Delta(\bb)\subset \mathbb{P}^S(\bb).$
\end{corollary}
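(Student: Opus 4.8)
The plan is to obtain this corollary as an immediate squeeze consequence of Theorem~\ref{th:exval}, using nothing more than the monotonicity of the supremum under set inclusion. All of the substantive work—the existence of the lopsided value and the reduction of the supremum over safe strategies to the supremum over finitely supported strategies—is already carried out in Theorem~\ref{th:exval}, so the corollary requires no new game-theoretic input.

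First I would record the two conclusions furnished by Theorem~\ref{th:exval}: under assumptions (i) and (ii), the game has a lopsided value $v,$ and
\[
\sup_{\pi^\bb\in\mathbb{P}^{fs}(\bb)}{\hat{c}}^{\flat}(\pi^\bb)=\sup_{\pi^\bb\in\mathbb{P}^{S}(\bb)}{\hat{c}}^{\flat}(\pi^\bb)=\inf_{\pi^\A\in\mathbb{P}(\A)} {\hat{c}}^{\sharp}(\pi^\A)=v.
\]
Thus the suprema of ${\hat{c}}^{\flat}$ over $\mathbb{P}^{fs}(\bb)$ and over $\mathbb{P}^{S}(\bb)$ coincide, and both equal the value $v.$

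Next, since by hypothesis $\mathbb{P}^{fs}(\bb)\subset\Delta(\bb)\subset \mathbb{P}^{S}(\bb),$ monotonicity of the supremum under set inclusion yields the chain
\[
\sup_{\pi^\bb\in\mathbb{P}^{fs}(\bb)}{\hat{c}}^{\flat}(\pi^\bb)\le\sup_{\pi^\bb\in\Delta(\bb)}{\hat{c}}^{\flat}(\pi^\bb)\le\sup_{\pi^\bb\in\mathbb{P}^{S}(\bb)}{\hat{c}}^{\flat}(\pi^\bb).
\]
The outer two terms both equal $v$ by the displayed equalities, so the middle term is squeezed to $v$ as well; that is, $\sup_{\pi^\bb\in\Delta(\bb)}{\hat{c}}^{\flat}(\pi^\bb)=v=\inf_{\pi^\A\in\mathbb{P}(\A)} {\hat{c}}^{\sharp}(\pi^\A),$ which is precisely the claimed equality.

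There is no genuine obstacle in this argument; the only point requiring care is that the squeeze draws on both conclusions of Theorem~\ref{th:exval} simultaneously—the infsup equality \emph{and} the equality of the supremum over $\mathbb{P}^{fs}(\bb)$ with that over $\mathbb{P}^{S}(\bb)$—and that $\Delta(\bb)$ is permitted to be any intermediate set rather than only one of the two extremes. Verifying that the inclusions indeed produce the displayed chain of inequalities is entirely routine.
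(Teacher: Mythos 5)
Your proof is correct and is exactly the argument the paper intends: its one-line proof (``follows from Theorem~\ref{th:exval} and from $\mathbb{P}^{fs}(\bb)\subset\Delta(\bb)\subset \mathbb{P}^S(\bb)$'') is precisely your squeeze, using both conclusions of Theorem~\ref{th:exval} together with monotonicity of the supremum under set inclusion. You have simply made explicit the steps the paper leaves implicit.
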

\begin{proof}  The corollary follows from Theorem~\ref{th:exval} and from $\mathbb{P}^{fs}(\bb)\subset\Delta(\bb)\subset \mathbb{P}^S(\bb).$
\qed\end{proof}

\begin{corollary}\label{cor:new1}
Let a  two-person zero-sum game $\{\A,\bb, c \}$  introduced in Definition~\ref{defi:game}
satisfy conditions~(i) and (ii) of Theorem~\ref{th:exval}. Then 
\begin{equation*}\label{eq:auauauavalval-lopsided:a1}
\inf_{\pi^\A\in\mathbb{P}(\A)} {\hat{c}}^{\sharp}(\pi^\A)=\min_{\pi^\A\in\mathbb{P}(\A)} {\hat{c}}^{\sharp}(\pi^\A).
\end{equation*} 
\end{corollary}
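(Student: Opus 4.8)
The plan is to read off both ingredients directly from Theorem~\ref{th:exval}, which has already done all the work. First I would invoke the conclusion of Theorem~\ref{th:exval}: under assumptions (i) and (ii) the game has a lopsided value $v,$ meaning that equality \eqref{eq:valval} holds. By the right-hand side of \eqref{eq:valval} (equivalently, by Definition~\ref{Def5.8}), this value satisfies
\begin{equation*}
v=\inf_{\pi^\A\in\mathbb{P}(\A)} {\hat{c}}^{\sharp}(\pi^\A).
\end{equation*}
Thus the quantity on the right-hand side of the claimed identity is precisely the lopsided value $v.$

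Next I would use the second assertion of Theorem~\ref{th:exval}, namely that the set $\mathbb{P}^\sharp_{v}(\A)$ is a \emph{nonempty} convex compact subset of $\mathbb{P}(\A).$ Recalling the definition of $\mathbb{P}^\sharp_{\alpha}(\A)$ from \eqref{eq:maxmin}, nonemptiness means there exists $\pi_*^\A\in\mathbb{P}(\A)$ with ${\hat{c}}^{\sharp}(\pi_*^\A)\le v.$ On the other hand, since $v$ is the infimum of ${\hat{c}}^{\sharp}$ over $\mathbb{P}(\A),$ the reverse inequality ${\hat{c}}^{\sharp}(\pi_*^\A)\ge v$ holds trivially. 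Combining the two gives ${\hat{c}}^{\sharp}(\pi_*^\A)=v=\inf_{\pi^\A\in\mathbb{P}(\A)}{\hat{c}}^{\sharp}(\pi^\A),$ so the infimum is attained at $\pi_*^\A$ and may therefore be written as a minimum, which is exactly the asserted equality.

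I do not anticipate any genuine obstacle here: the corollary is an immediate bookkeeping consequence of Theorem~\ref{th:exval}, since that theorem already establishes both that the infimum equals the lopsided value $v$ and that the sublevel set $\mathbb{P}^\sharp_{v}(\A)$ (on which ${\hat{c}}^{\sharp}$ does not exceed $v$) is nonempty. The only point requiring a moment's care is to match the definition of $\mathbb{P}^\sharp_{v}(\A)$ in \eqref{eq:maxmin} with the infimum in \eqref{eq:valval}, so that the nonemptiness statement translates into attainment of the infimum rather than a mere near-minimizer.
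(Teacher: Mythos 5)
Your proof is correct and takes essentially the same approach as the paper: the paper's own proof likewise deduces the corollary from equality \eqref{eq:valval} combined with the nonemptiness of the set $\mathbb{P}^\sharp_{v}(\A)$ guaranteed by Theorem~\ref{th:exval}. You merely spell out the attainment argument (any element of $\mathbb{P}^\sharp_{v}(\A)$ realizes the infimum) that the paper leaves implicit.
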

\begin{proof}
 The corollary follows from (\ref{eq:valval})
because the set $\mathbb{P}^\sharp_{v}(\A)$ in Theorem~\ref{th:exval} is
nonempty. \qed
\end{proof}

\begin{corollary}\label{cor:new2}
Let a  two-person zero-sum game $\{\A,\bb, c \}$  introduced in Definition~\ref{defi:game}
satisfy conditions~(i) and (ii) of Theorem~\ref{th:exval} and $c(\,\cdot\,,b)
$ be inf-compact  for each $b\in \bb.$   Then 
\begin{equation}\label{eq:new1}
\begin{aligned}
&\min_{\pi^\A\in\mathbb{P}(\A)}\sup_{\pi^\bb\in\mathbb{P}^{fs}(\bb)} {\hat c}(\pi^\A, \pi^\bb)=
&\sup_{\pi^\bb\in\mathbb{P}^{fs}(\bb)}\min_{\pi^\A\in\mathbb{P}(\A)} {\hat c}(\pi^\A, \pi^\bb).
\end{aligned}
\end{equation}
\end{corollary}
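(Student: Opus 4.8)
The plan is to show that both sides of \eqref{eq:new1} coincide with the lopsided value $v$ of \eqref{eq:valval}, whose existence is guaranteed by Theorem~\ref{th:exval} under assumptions (i) and (ii). The whole argument amounts to rewriting the two nested extrema in terms of the functions ${\hat{c}}^\sharp$ and ${\hat{c}}^\flat$ from \eqref{eq:maxmin} and then invoking the value equality already proved. The extra hypothesis that $c(\,\cdot\,,b)$ is inf-compact for \emph{every} $b\in\bb$ will be used only to guarantee that the inner minimum on the right-hand side is genuinely attained.

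First I would handle the inner supremum on the left-hand side. For a fixed $\pi^\A\in\mathbb{P}(\A)$, every pure strategy of Player~II belongs to $\mathbb{P}^{fs}(\bb)$, so ${\hat{c}}^\sharp(\pi^\A)=\sup_{b\in\bb}{\hat{c}}(\pi^\A,b)\le\sup_{\pi^\bb\in\mathbb{P}^{fs}(\bb)}{\hat{c}}(\pi^\A,\pi^\bb)$; conversely, Remark~\ref{rem:new1} gives $\mathbb{P}^{fs}(\bb)\subset\mathbb{P}^S_{\pi^\A}(\bb)$, so \eqref{eq:prop:mainIa} yields the reverse inequality. Hence $\sup_{\pi^\bb\in\mathbb{P}^{fs}(\bb)}{\hat{c}}(\pi^\A,\pi^\bb)={\hat{c}}^\sharp(\pi^\A)$ for all $\pi^\A$, and the left-hand side of \eqref{eq:new1} reduces to $\min_{\pi^\A\in\mathbb{P}(\A)}{\hat{c}}^\sharp(\pi^\A)$. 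By Corollary~\ref{cor:new1} this infimum is attained and equals $\inf_{\pi^\A}{\hat{c}}^\sharp(\pi^\A)=v$.

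Next I would handle the inner minimum on the right-hand side. Fix $\pi^\bb\in\mathbb{P}^{fs}(\bb)$; since $\mathbb{P}^{fs}(\bb)\subset\mathbb{P}^S(\bb)$ by Remark~\ref{rem:uncer}, we have $\mathbb{P}^S_{\pi^\bb}(\A)=\mathbb{P}(\A)$, and \eqref{eq:prop:mainIb} gives $\inf_{\pi^\A\in\mathbb{P}(\A)}{\hat{c}}(\pi^\A,\pi^\bb)={\hat{c}}^\flat(\pi^\bb)$. Thus the right-hand side of \eqref{eq:new1} reduces to $\sup_{\pi^\bb\in\mathbb{P}^{fs}(\bb)}{\hat{c}}^\flat(\pi^\bb)$, which equals $v$ by the second assertion of Theorem~\ref{th:exval}. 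Combining the two reductions already shows that both sides equal $v$, once the inner infimum over $\mathbb{P}(\A)$ is known to be a minimum.

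The crux of the proof, and the only step requiring the strengthened inf-compactness hypothesis, is verifying that $\inf_{\pi^\A\in\mathbb{P}(\A)}{\hat{c}}(\pi^\A,\pi^\bb)$ is attained for each $\pi^\bb\in\mathbb{P}^{fs}(\bb)$. Decomposing ${\hat{c}}(\,\cdot\,,\pi^\bb)=\sum_{m=1}^M\beta^{(m)}{\hat{c}}(\,\cdot\,,b^{(m)})$ as in step $(i_3)$ of the proof of Theorem~\ref{th:exval}, I would apply Theorem~\ref{th:inf-comp} with a singleton first coordinate (exactly as in step $(i_4)$, but now at each support point $b^{(m)}$ rather than only at the distinguished $b_0$) to conclude that every ${\hat{c}}(\,\cdot\,,b^{(m)})$ is inf-compact on $\mathbb{P}(\A)$; this is precisely where inf-compactness of $c(\,\cdot\,,b)$ for \emph{all} $b$ is needed, since the support of a generic $\pi^\bb\in\mathbb{P}^{fs}(\bb)$ need not contain $b_0$. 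As a finite convex combination of bounded-below functions, at least one of which (with positive weight) is inf-compact, the function ${\hat{c}}(\,\cdot\,,\pi^\bb)$ is itself inf-compact and, by Lemma~\ref{lem:lsc}, lower semi-continuous and bounded below on the metric space $\mathbb{P}(\A)$; hence it attains its finite infimum. With the inner minimum attained, the three reductions above give that both sides of \eqref{eq:new1} equal $v$, which completes the argument. I expect the attainment step to be the only substantive point, the remaining identities being immediate consequences of Theorem~\ref{th:exval} and Theorem~\ref{cor:mainI}.
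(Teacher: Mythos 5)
Your proposal is correct and takes essentially the same route as the paper's proof: both reduce the inner extrema to ${\hat{c}}^{\sharp}$ and ${\hat{c}}^{\flat}$ via Theorem~\ref{cor:mainI} and Corollary~\ref{cor:new1}, obtain the infsup equality from Theorem~\ref{th:exval} (the paper phrases this step through Corollary~\ref{corth:exval} with $\Delta(\bb)=\mathbb{P}^{fs}(\bb)$), and establish attainment of the inner infimum by showing $\pi^\A\mapsto{\hat{c}}(\pi^\A,b)$ is inf-compact on $\mathbb{P}(\A)$ for every $b\in\bb$ (Theorem~\ref{th:inf-comp}) and that a finite convex combination of such functions is inf-compact. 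The only differences are cosmetic, e.g.\ your explicit boundedness-below caveat in the convex-combination step, which the paper leaves implicit.
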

\begin{proof}
Observe that 
\[
\begin{aligned}
\min_{\pi^\A\in\mathbb{P}(\A)}&\sup_{\pi^\bb\in\mathbb{P}^{fs}(\bb)}{\hat c}(\pi^\A, \pi^\bb)=
 \inf_{\pi^\A\in\mathbb{P}(\A)}\sup_{\pi^\bb\in\mathbb{P}^{fs}(\bb)}{\hat c}(\pi^\A, \pi^\bb)\\ &=
 \sup_{\pi^\bb\in\mathbb{P}^{fs}(\bb)}\inf_{\pi^\A\in\mathbb{P}(\A)}{\hat c}(\pi^\A, \pi^\bb),
\end{aligned}
\]
where the first equality follows from Corollary~\ref{cor:new1}, Remark~\ref{rem:uncer}, and \eqref{eq:prop:mainIa}. The second equality follows from Corollary~\ref{corth:exval},
  applied to $\Delta(\bb)=\mathbb{P}^{fs}(\bb)$.
It remains to prove that, for each $\pi^\bb\in\mathbb{P}^{fs}(\bb),$
\begin{equation}\label{eq39}
\inf_{\pi^\A\in\mathbb{P}(\A)}{\hat c}(\pi^\A, \pi^\bb)=\min_{\pi^\A\in\mathbb{P}(\A)}{\hat c}(\pi^\A, \pi^\bb).
\end{equation}
 To prove \eqref{eq39} observe that the function $\pi^\A\mapsto{\hat c}(\pi^\A, b)$ on $\mathbb{P}(\A)$ is inf-compact for each $b\in\bb.$ 
This follows from Theorem~\ref{th:inf-comp}, applied to $\X:=\{b\},$ $\Y:=\A,$ and $f(b,a):=c(a,b),$ $a\in\A,$  because,  for each $b\in \bb,$ the function $c(\,\cdot\,,b)$ is inf-compact.
Equality  \eqref{eq39} follows from inf-compactness of the function $\pi^\A\mapsto{\hat c}(\pi^\A, \pi^\bb)$ on $\mathbb{P}(\A)$ for each $\pi^\bb\in\mathbb{P}^{fs}(\bb),$ which, in its turn,  follows from inf-compactness of the function $\pi^\A\mapsto{\hat c}(\pi^\A, b)$ on $\mathbb{P}(\A)$ for each $b\in\bb$ because the
convex combination of inf-compact functions ${\hat c}(\pi^\A, \pi^\bb)=\sum_{b\in B}  \pi^\bb(b){\hat c}(\pi^\A, b),$ where $B$ is a finite subset of $\bb$ such that $\pi^\bb(B)=1,$ is an inf-compact function.
\end{proof}

The following Corollary~\ref{th:mixed} to Theorem~\ref{th:exval} is Proposition~I.1.9 from Mertens et al. \cite{MSZ} for two-person zero-sum games $\{\A,\bb, c \}$  introduced in Definition~\ref{defi:game}. Note that the space $\A$ is a compact topological space, $\bb$ is any set, and for each $b\in \bb,$ $c(\,\cdot\,,b)$ is lower semi-continuous in Mertens et al. \cite[Proposition~I.1.9]{MSZ}.

\begin{corollary}\label{th:mixed}{\rm(cp.~Mertens et al. \cite[Proposition~I.1.9]{MSZ})}
Let $\{\A,\bb, c \}$ be a two-person zero-sum game  introduced in Definition~\ref{defi:game},
$\A$ be a compact, and for each $b\in \bb,$ $c(\,\cdot\,,b)$ is lower semi-continuous. Then
(\ref{eq:new1}) holds.
\end{corollary}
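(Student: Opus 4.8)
The plan is to recognize the statement as the special case of Corollary~\ref{cor:new2} in which the action set $\A$ is compact, and therefore to reduce the proof to verifying that the hypotheses of Corollary~\ref{cor:new2} are all consequences of the assumptions made here. Those hypotheses are: conditions (i) and (ii) of Theorem~\ref{th:exval}, together with inf-compactness of $c(\,\cdot\,,b)$ on $\A$ for every $b\in\bb$. Condition (i) --- lower semi-continuity of $a\mapsto c(a,b)$ for each $b$ --- is assumed outright, so only the two inf-compactness requirements remain to be checked.

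The key step is the observation that compactness of $\A$ upgrades lower semi-continuity to inf-compactness. Concretely, fix $b\in\bb$; by Remark~\ref{rem:tildelsc}, lower semi-continuity of $c(\,\cdot\,,b)$ is equivalent to closedness in $\A$ of every level set $\mathcal{D}_{c(\,\cdot\,,b)}(\lambda;\A)$, $\lambda\in\mathbb{R}$. Since $\A$ is compact and a closed subset of a compact metric space is compact, each such level set is compact, so $c(\,\cdot\,,b)$ is inf-compact on $\A$. This holds for every $b\in\bb$, which simultaneously delivers condition (ii) of Theorem~\ref{th:exval} (take $b_0$ to be any point of $\bb$) and the additional inf-compactness hypothesis of Corollary~\ref{cor:new2}.

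Having checked all hypotheses, the plan is to invoke Corollary~\ref{cor:new2} directly, which yields equality \eqref{eq:new1}. I expect no serious obstacle here: the substantive content of the statement is already packaged in Corollary~\ref{cor:new2}, and the only work is the routine passage from lower semi-continuity on a compact domain to inf-compactness via level sets. One minor point worth stating explicitly is that the standing assumptions (iv) and (v) of Definition~\ref{defi:game} remain in force because $\{\A,\bb,c\}$ is assumed to be a game; in fact assumption (iv) becomes automatic once $\A$ is compact and $c(\,\cdot\,,b)$ is lower semi-continuous, since a lower semi-continuous function on a compact set attains its minimum and is thus bounded below.
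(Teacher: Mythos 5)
Your proposal is correct and follows essentially the same route as the paper: the paper's proof likewise observes that compactness of $\A$ together with lower semi-continuity of $c(\,\cdot\,,b)$ yields inf-compactness of $c(\,\cdot\,,b)$ for each $b\in\bb,$ and then invokes Corollary~\ref{cor:new2}. Your write-up merely makes explicit the level-set argument behind that observation (via Remark~\ref{rem:tildelsc}) and the bookkeeping of which hypotheses of Theorem~\ref{th:exval} and Corollary~\ref{cor:new2} are being verified.
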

\begin{proof}
Since $\A$ be a compact, and for each $b\in \bb,$ $c(\,\cdot\,,b)$ is lower semi-continuous, then for each $b\in \bb,$ $c(\,\cdot\,,b)$ is inf-compact. Therefore, (\ref{eq:new1}) follows from Corollary~\ref{cor:new2}. \qed
\end{proof}



The following Proposition~\ref{auEknon} is Theorem~6.2.7 from Aubin and Ekeland~\cite{ObEk} for two-person zero-sum games $\{\A,\bb, c \}$  introduced in Definition~\ref{defi:game}. Note that the space $\A$ is a topological space, the space $\bb$ is not endowed with a topology, and $c$ is not measurable in Aubin and Ekeland~\cite[Theorem~6.2.7]{ObEk}.

\begin{proposition}{\rm(cp.~Aubin and Ekeland~\cite[Theorem~6.2.7]{ObEk})}\label{auEknon}
Let a  two-person zero-sum game $\{\A,\bb, c \}$  introduced in Definition~\ref{defi:game}
satisfy conditions~(i) and (ii) of Theorem~\ref{th:exval}. Suppose the spaces $\A$ and $\bb$ are convex subsets of vector spaces, the function $a\mapsto c(a,b)$ is convex for each $b\in\bb,$ and the function $b\mapsto c(a,b)$ is concave for each $a\in\A.$ 
Then
\begin{equation}\label{eq:new3}
\inf_{a\in \A}\sup_{b\in\bb}c(a,b)=\sup_{b\in\bb}\inf_{a\in\A} c(a,b)=:V.
\end{equation}
Moreover, there exists $a^*\in\A$ such that $\hat{c}^{\sharp}(a^*)=V.$
\end{proposition}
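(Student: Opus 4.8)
The plan is to derive the symmetric minimax identity \eqref{eq:new3} from the lopsided Aubin--Ekeland identity of Theorem~\ref{teor:obek}, applied \emph{directly} to the payoff $c$ rather than to its mixed extension. This is legitimate precisely because convexity of $a\mapsto c(a,b)$ and concavity of $b\mapsto c(a,b)$ are now assumed on the convex action sets $\A$ and $\bb$, and $c$ is real-valued by Definition~\ref{defi:game}(iii); so Theorem~\ref{teor:obek} with $A=\A$, $B=\bb$, and $f=c$ gives
\[
\sup_{b\in\bb}\inf_{a\in\A}c(a,b)=\sup_{F\in\F(\bb)}\inf_{a\in\A}\max_{b\in F}c(a,b)=:v^*.
\]
Since enlarging any finite $F$ by the point $b_0$ of condition~(ii) only increases $\inf_{a}\max_{b\in F}c(a,b)$, the supremum defining $v^*$ may be restricted to finite sets $F\ni b_0$. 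The remaining goal is the inequality $\inf_{a\in\A}\sup_{b\in\bb}c(a,b)\le v^*$; its reverse $v^*\le\inf_{a}\sup_{b}c$ is immediate from $\max_{b\in F}c(a,b)\le\sup_{b\in\bb}c(a,b)$, and the two together yield \eqref{eq:new3}.

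Before that I would record finiteness, so that the level sets $\mathcal{D}_{c(\,\cdot\,,b)}(v^*)$ are indexed by a genuine real number. Condition~(v) of Definition~\ref{defi:game} gives $\sup_{b}c(a,b)<+\infty$ for every $a$, whence $\inf_{a}\sup_{b}c<+\infty$; and condition~(ii) together with lower boundedness of $c(\,\cdot\,,b_0)$ from condition~(iv) shows that the inf-compact, lower semi-continuous function $c(\,\cdot\,,b_0)$ attains a finite minimum, so $v^*\ge\inf_{a}c(a,b_0)>-\infty$. Hence both $v^*$ and $\inf_{a}\sup_{b}c$ are finite.

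The core step, and the step I expect to be the main obstacle, is a finite-intersection/compactness argument that mirrors the endgame of the proof of Theorem~\ref{th:exval}, but carried out with pure actions in place of mixed strategies. For each finite $F\ni b_0$, the function $a\mapsto\max_{b\in F}c(a,b)$ is lower semi-continuous by condition~(i) and inf-compact on $\A$, because its level sets are closed and are contained in the compact level sets of $c(\,\cdot\,,b_0)$ (level-set property~(b) of the Introduction, with inf-compactness from condition~(ii)); it therefore attains its minimum at some $a_F$ with $\max_{b\in F}c(a_F,b)\le v^*$. Consequently $a_F\in\bigcap_{b\in F}\mathcal{D}_{c(\,\cdot\,,b)}(v^*)$, so the family $\{\mathcal{D}_{c(\,\cdot\,,b)}(v^*)\}_{b\in\bb}$ has the finite intersection property. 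Each of these sets is closed by Remark~\ref{rem:tildelsc} and contained in the compact set $\mathcal{D}_{c(\,\cdot\,,b_0)}(v^*)$, so their total intersection is nonempty; see Reed and Simon~\cite[p.~98]{SR}. Choosing $a^*$ in this intersection gives $c(a^*,b)\le v^*$ for all $b\in\bb$, hence $\inf_{a}\sup_{b}c\le\sup_{b}c(a^*,b)\le v^*$, which closes \eqref{eq:new3} with common value $V=v^*$.

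Finally, for the ``moreover'' claim I would use the very same $a^*$. Since $\hat{c}^{\sharp}(a^*)=\sup_{b\in\bb}\hat{c}(a^*,b)=\sup_{b\in\bb}c(a^*,b)\le v^*=V$, while trivially $\sup_{b}c(a^*,b)\ge\inf_{a}\sup_{b}c=V$, we conclude $\hat{c}^{\sharp}(a^*)=V$.
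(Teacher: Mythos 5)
Your proof is correct, and it is genuinely different from what the paper does: the paper supplies no proof of Proposition~\ref{auEknon} at all, presenting it as Aubin and Ekeland's Theorem~6.2.7 specialized to the games of Definition~\ref{defi:game} (the remark that follows only adds that $V$ coincides with the lopsided value of \eqref{eq:valval}, and that the ``moreover'' part survives under the weaker hypotheses of Theorem~\ref{th:exval} because $a\mapsto\sup_{b\in\bb}c(a,b)$ is inf-compact on $\A$). What you have done is reconstruct the cited result from ingredients already in the paper: Theorem~\ref{teor:obek} applied directly to $c$ on $\A\times\bb$ --- legitimate because convexity and concavity are now assumed at the level of pure actions and $c$ is real-valued by Definition~\ref{defi:game}(iii), so the detour through $\mathbb{P}_{<+\infty}^{\sharp}(\A)\times\mathbb{P}^{fs}(\bb)$ that the paper needs in the proof of Theorem~\ref{th:exval} to keep $\hat{c}$ finite is unnecessary --- followed by the same inf-compactness/finite-intersection endgame as in that proof, transplanted from $\mathbb{P}(\A)$ to $\A.$ Your route buys a self-contained, purely pure-strategy argument that makes visible that the proposition needs nothing about mixed extensions; the paper's route buys brevity and, via Theorem~\ref{th:exval}, the identification of $V$ with the lopsided value.

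One slip in wording, not a gap: a level set $\mathcal{D}_{c(\,\cdot\,,b)}(v^*)$ for a general $b\in\bb$ is \emph{not} contained in the compact set $\mathcal{D}_{c(\,\cdot\,,b_0)}(v^*).$ As in the paper's proof of Theorem~\ref{th:exval}, the fix is to work with the family $\{\mathcal{D}_{c(\,\cdot\,,b)}(v^*)\cap\mathcal{D}_{c(\,\cdot\,,b_0)}(v^*)\}_{b\in\bb}$: each member is compact (a closed-in-$\A$ subset of a compact subset of $\A$), the family inherits the finite intersection property from your points $a_F$ because you may always adjoin $b_0$ to $F,$ and its total intersection equals $\bigcap_{b\in\bb}\mathcal{D}_{c(\,\cdot\,,b)}(v^*),$ so your choice of $a^*$ and the rest of the argument stand unchanged.
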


\begin{remark}
{\rm
 (i) The assumptions of Theorem~\ref{th:exval} are more general than the assumptions of Proposition~\ref{auEknon} because neither convexity nor concavity of the function $c$ is assumed in Theorem~\ref{th:exval}.
(ii) Under the assumptions of Proposition~\ref{auEknon}, 
the value $V$ equals the lopsided value defined in  \eqref{eq:valval}. 
This observation follows from the equality stated in Theorem~\ref{th:exval} and from the equalities
\begin{equation}\label{eq:conv1a}
\sup_{b\in\bb}\inf_{a\in\A} c(a,b)=\sup_{\pi^\bb\in\mathbb{P}^{fs}(\bb)}\hat{c}^{\flat}(\pi^\bb)
=\inf_{\pi^\A\in\mathbb{P}(\A)} \hat{c}^{\sharp}(\pi^\A)=\inf_{a\in \A}\sup_{b\in\bb}c(a,b),
\end{equation}
which follow from Proposition~\ref{auEknon} and 
\begin{equation}\label{eq:conv1a1a}
\sup_{b\in\bb}\inf_{a\in\A} c(a,b)\le\sup_{\pi^\bb\in\mathbb{P}^{fs}(\bb)}\hat{c}^{\flat}(\pi^\bb)
=\inf_{\pi^\A\in\mathbb{P}(\A)} \hat{c}^{\sharp}(\pi^\A)\le\inf_{a\in \A}\sup_{b\in\bb}c(a,b),
\end{equation}
where the inequalities in \eqref{eq:conv1a1a} hold because each actions $a\in \A$ and $b\in\bb$ for Players~I and II can be interpreted as the strategies $\delta_{\{a\}}\in\mathbb{P}(\A)$ and $\delta_{\{b\}}\in\mathbb{P}(\bb)$ concentrated in points $a$ and $b$ respectively, and  the equality in \eqref{eq:conv1a1a} follows from Theorem~\ref{th:exval}.
(iii) Under the assumptions of Theorem~\ref{th:exval}, there exists $a^*\in\A$ such that $\hat{c}^{\sharp}(a^*)=\inf_{a\in \A}\sup_{b\in\bb}c(a,b).$  This is true since  the function   $\hat{c}^{\sharp}(a)=\sup_{b\in\bb}c(a,b)$ is  inf-compact  on $\A$ because this function the supremum of lower semi-continuous functions $a\mapsto c(a,b)$ and at least one of them, $a\mapsto c(a,b_0),$ is  inf-compact.}
\end{remark}


\begin{remark}\label{rem:fr}
{\rm
Theorem~\ref{th:exval} allows the function $\hat c$ 
to take the values from $\mathbb{R}\cup\{+\infty\}$ unlike the payoff function in  Aubin and Ekeland~\cite[Theorem~6.2.7]{ObEk}, that takes only finite values. This is the reason why  Theorem~\ref{th:exval} does not follow from  \cite[Theorem~6.2.7]{ObEk}
and properties ($i_1$)--($i_4$) stated in the proof of Theorem~\ref{th:exval}.} 
\end{remark}
\begin{remark}
{\rm Fan's minimax theorem~\cite[Theorem 2]{Fan} states equality \eqref{eq:new3} for a convex-concave-like function $c,$ when $\A$ is a compact subset of a Hausdorff space, $\bb$ is arbitrary, and the functions $a\mapsto c(a,b)$ are lower semi-continuous for all $b\in\bb.$  By using this theorem, Perchet and Vigeral~\cite{SorSt1} provided \eqref{eq:new3} for a convex-concave function $c$ without the assumption that $\A$ is compact, but with additional assumptions including that $\A$ is finite-dimensional and bounded.
}
\end{remark}

The following example describes a two-person zero-sum game with noncompact action sets and unbounded payoffs  satisfying  the assumptions of Theorem~\ref{th:exval}. 

\begin{example}\label{exa:1}
{\rm Let $\A=\bb=\mathbb{R},$ $c(a,b)=a^2-b^2,$ $(a,b)\in \mathbb{R}^2.$ Then the game $\{\A,\bb, c \}$  satisfies the conditions of Theorem~\ref{th:exval} and $v=0.$
%
}
\end{example}

Example~\ref{exa:1} admits the following interpretation in the form of a simple game of timing (see Yanovskaya \cite[Section~6]{Yan}) with noncompact decision sets.  Two teams work on a project consisting of two independent tasks, each performed by one of the teams.  The project should be completed on a target date.  The project is completed when both tasks are completed, and they should be completed simultaneously.  The penalty, in the amount of $t^2$ paid to another team for completing its task by $t$ units of time later or earlier than the target date, creates incentives to the teams to complete their tasks exactly on time. Of course, there are other payoff functions including $|t|$ that provide incentives to achieve the same goal.

If
\begin{equation}\label{eq:valUvalS}
\sup_{\pi^\bb\in\mathbb{P}^U(\bb)}{\hat{c}}^{\flat}(\pi^\bb)\le \sup_{\pi^\bb\in\mathbb{P}^S(\bb)}{\hat{c}}^{\flat}(\pi^\bb),
\end{equation}
 as this takes place in Example~\ref{exa:1}, then the existence of the lopsided value $v$ defined in \eqref{eq:valval} implies that the equality
\begin{equation}\label{eq:valvalEFEF}
\sup_{\pi^\bb\in\mathbb{P}(\bb)}{\hat{c}}^{\flat}(\pi^\bb)=\inf_{\pi^\A\in\mathbb{P}(\A)} {\hat{c}}^{\sharp}(\pi^\A)
\end{equation}
holds.  In particular, \eqref{eq:valUvalS} and \eqref{eq:valvalEFEF} hold if ${\hat{c}}^{\flat}(\pi^\bb)=-\infty$ for all $\pi^\bb\in\mathbb{P}^U.$  The following example demonstrates that it is possible that under the condition, that  the function $(b,a)\mapsto c(a,b)$ is $\K$-inf-compact on $\bb\times\A,$ which is a stronger condition than the assumptions of Theorem~\ref{th:exval}, it is possible that ${\hat{c}}^{\flat}(\pi^\bb)>-\infty$ for some $\pi^\bb\in \mathbb{P}^U(\bb).$

%
%

\begin{example}\label{exa1}
{\rm The function $(b,a)\mapsto c(a,b)$ is $\K$-inf-compact on $\bb\times\A,$ the function $(a,b)\mapsto c(a,b)$ is $\K$-sup-compact on $\A\times\bb,$  and there exists $\pi^\bb\in \mathbb{P}^U(\bb)$ such that ${\hat{c}}^{\flat}(\pi^\bb)>-\infty.$

Let us set $\A:=\bb:=\{1,2,\ldots\},$ $c(a,b):=6^a4^b{\bf I}\{b<a\}-6^b4^a{\bf I}\{a<b\},$ $\pi^\bb(\{b\}):=\frac{11}{12^b},$ $b=1,2,\ldots\ .$ We consider the discrete metrics on $\A$ and $\bb.$ 

The function $(b,a)\mapsto c(a,b)$ is $\K$-inf-compact on $\bb\times\A$ because $c(a,b)\to+\infty,$ as $a\to\infty,$ for each $b=1,2,\ldots\ .$ Here we note that a set $K\subset \bb$ is compact if and only if   $K$ is finite. The function $(a,b)\mapsto c(a,b)$ is $\K$-sup-compact on $\A\times\bb$ because $c(a,b)\to-\infty,$ as $b\to\infty,$ for each $a=1,2,\ldots\ .$

We notice that for each $b=1,2,\ldots$
\[
\begin{aligned}
{\hat{c}}^\ominus(a,\pi^\bb)&=-\sum_{b=a+1}^\infty 6^b4^a\frac{11}{12^b}=-11\cdot 4^a\sum_{b=a+1}^\infty\frac{1}{2^b} 
=-11 \cdot2^a,\\
{\hat{c}}^\oplus(a,\pi^\bb)&=\sum_{b=1}^{a-1}6^a4^b\frac{11}{12^b}={11}\cdot 6^{a}\sum_{b=1}^{a-1} \frac{1}{3^b}=
\frac{11}{2}6^a-\frac{33}{2}2^a.
\end{aligned}
\]
 Therefore, ${\hat{c}}(a,\pi^\bb)=\frac{11}{2}6^a-\frac{55}{2} 2^a$ for each $a=1,2,\ldots\ .$ Since ${\hat{c}}(a,\pi^\bb)\to +\infty,$ as $a\to\infty,$ then ${\hat{c}}^{\flat}(\pi^\bb)=\inf_{a\in\A}{\hat{c}}(a,\pi^\bb)>-\infty.$

Let us set $\pi^\A(\{a\}):=\frac{1}{2^a},$ $a=1,2,\ldots\ .$
Since $\pi^\A\in\mathbb{P}(\A)$ and
\[
\begin{aligned}
{\hat{c}}^\ominus(\pi^\A,\pi^\bb)&=-\sum_{a=1,2,\ldots} 11 \cdot2^a\frac{1}{2^a}=-\infty,\\
{\hat{c}}^\oplus(\pi^\A,\pi^\bb)&=\sum_{a=1,2,\ldots}\left(\frac{11}{2}6^a-\frac{33}{2}2^a\right)\frac{1}{2^a}=+\infty,\\
\end{aligned}
\]
 then $\pi^\bb\in \mathbb{P}^U(\bb).$    
%
}
\end{example}

\subsection{The Existence of a Solution}\label{sub:sol}

This subsection provides the definition of a solution of a two-person zero-sum game with possibly non-compact actions and unbounded payoff. Theorem~\ref{teor:mainonestep} establishes sufficient conditions for the existence of solutions for such games.

\begin{definition}\label{defi:equil}
The pair of mixed strategies $(\pi^\A,\pi^\bb)\in \mathbb{P}^S(\A)\times\mathbb{P}^S(\bb)$ for Players~I and II is called a \textit{solution  (saddle point, equilibria)} of the game $\{\A,\bb, c\},$ if
\begin{equation}\label{eq:solution}
{\hat{c}}(\pi^\A,\pi^\bb_*)\le {\hat{c}}(\pi^\A,\pi^\bb)\le{\hat{c}}(\pi^\A_*,\pi^\bb)
\end{equation}
for each $\pi_*^\A\in \mathbb{P}(\A)$ and $\pi^\bb_*\in\mathbb{P}(\bb).$
%
\end{definition}

\begin{remark}\label{rem:sol}
{\rm Let the solution $(\pi^\A,\pi^\bb)\in \mathbb{P}^S(\A)\times\mathbb{P}^S(\bb)$ of the game $\{\A,\bb, c\}$ exist. Then the number
\begin{equation}\label{eq:rem15_1}
v:={\hat{c}}^{\flat}(\pi^\bb)={\hat{c}}^{\sharp}(\pi^\A)
\end{equation}
is the lopsided value of this game. Indeed, inequalities (\ref{eq:solution}) imply that 
 \begin{equation}\label{eq:rem15_2}
 {\hat{c}}^{\sharp}(\pi^\A)\le{\hat{c}}^{\flat}(\pi^\bb).
 \end{equation}

According to
 Remark~\ref{rem:main2} and Definition~\ref{Def5.8}, $(\pi^\A,\pi^\bb)\in \mathbb{P}^S(\A)\times\mathbb{P}^S(\bb)$ is a solution  of the game $\{\A,\bb, c\}$ 
 if and only if inequality (\ref{eq:rem15_2}) holds. Indeed, if $(\pi^\A,\pi^\bb)\in \mathbb{P}^S(\A)\times\mathbb{P}^S(\bb)$
 is the solution of the game $\{\A,\bb, c\},$ then inequalities (\ref{eq:solution}) imply (\ref{eq:rem15_2}). Vice versa, if  inequality (\ref{eq:rem15_2}) holds, then, since
 $\mathbb{P}^S_{\pi^\A}(\bb)=\mathbb{P}(\bb)$ and $\mathbb{P}^S_{\pi^\bb}(\A)=\mathbb{P}(\A)$,   Theorem~\ref{cor:mainI}  implies inequalities (\ref{eq:solution}), that is, $(\pi^\A,\pi^\bb)\in \mathbb{P}^S(\A)\times\mathbb{P}^S(\bb)$
 is the solution of the game $\{\A,\bb, c\}.$ We remark also that inequality (\ref{eq:rem15_2}) holds if and only if $\pi^\A\in \mathbb{P}^\sharp_{v}(\A)$ and  $\pi^\bb\in \mathbb{P}_{v}^\flat(\bb)$ because  of (\ref{eq:rem15_1}) and the definitions of $\mathbb{P}^\sharp_{v}(\A)$ and $\mathbb{P}_{v}^\flat(\bb).$ Furthermore, according to Remark~\ref{rem:lopsided}, in the case of $\mathbb{P}^S(\bb)=\mathbb{P}(\bb),$ which takes place in Theorems~\ref{teor:mainonestep}, \ref{teor:maincont of equil} and Corollary~\ref{corteor:maincont of equil}, the lopsided value is equal to the value in the classic sense.}
\end{remark}

The following theorem provides sufficient conditions for the existence of a solution.

\begin{theorem}\label{teor:mainonestep}
Let  a two-person zero-sum game $\{\A,\bb, c \}$ introduced in Definition~\ref{defi:game} satisfy the following assumptions:
\begin{itemize}
\item[{\rm(a)}] the function $(b,a)\mapsto c(a,b)$ is $\K$-inf-compact on $\bb\times\A;$
\item[{\rm(b)}] the function $(a,b)\mapsto c(a,b)$ is $\K$-sup-compact on $\A\times\bb;$
\item[{\rm(c)}] the function $(a,b)\mapsto c(a,b)$ is bounded from below.
\end{itemize}

 Then the following statements hold:
\begin{itemize}
\item[{\rm(i)}] the game $\{\A,\bb, c \}$ has a solution $(\pi^\A,\pi^\bb)\in \mathbb{P}^\sharp_{v}(\A)\times \mathbb{P}_{v}^\flat(\bb);$
\item[{\rm(ii)}]  the sets  $\mathbb{P}^\sharp_{v}(\A)$ and $\mathbb{P}_{v}^\flat(\bb)$
 are nonempty convex compact subsets of $\mathbb{P}(\A)$ and $\mathbb{P}(\bb)$ respectively;  
\item[{\rm(iii)}] a pair of
  strategies $(\pi^\A,\pi^\bb)\in \mathbb{P}(\A)\times \mathbb{P}(\bb)$
  is a solution of the game $\{\A,\bb, c \}$ if and only if
  $\pi^\A\in \mathbb{P}^\sharp_{v}(\A)$ and  $\pi^\bb\in \mathbb{P}_{v}^\flat(\bb).$
  \end{itemize}
\end{theorem}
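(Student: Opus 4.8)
The plan is to derive all three statements from the existence theorem for the lopsided value, Theorem~\ref{th:exval}, applied both to $\{\A,\bb,c\}$ and to its reverse game $\{\bb,\A,-c^{\A\leftrightarrow\bb}\}$ of Remark~\ref{rem:sim}. The first move is to neutralize every difficulty with undefined payoffs: by assumption (c) the payoff $c$ is bounded from below, so $\hat c^\ominus(\pi^\A,\pi^\bb)>-\infty$ for all strategies and $\hat c$ is well-defined on all of $\mathbb{P}(\A)\times\mathbb{P}(\bb)$. Remark~\ref{rem:equivEFAB} then gives $\mathbb{P}^S(\A)=\mathbb{P}(\A)$ and $\mathbb{P}^S(\bb)=\mathbb{P}(\bb)$, so that the lopsided value, once shown to exist, coincides with the classical value and the supremum in \eqref{eq:valval} ranges over all of $\mathbb{P}(\bb)$.

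Next I would check that assumption (a) supplies the hypotheses of Theorem~\ref{th:exval} for $\{\A,\bb,c\}$. The $\K$-inf-compactness of $(b,a)\mapsto c(a,b)$ on $\bb\times\A$ forces joint lower semi-continuity (Lemma~\ref{k-inf-compact}), hence lower semi-continuity of each $a\mapsto c(a,b)$, and inf-compactness of its restriction to every compact singleton $\{b_0\}\times\A$ (Definition~\ref{def:kinf}), hence inf-compactness of $a\mapsto c(a,b_0)$. Theorem~\ref{th:exval} then yields the value $v$, the infsup equality, and the fact that $\mathbb{P}^\sharp_v(\A)$ is nonempty, convex and compact. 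To obtain the matching facts for Player~II, I would apply Theorem~\ref{th:exval} to the reverse game: assumption (b) says precisely that $(a,b)\mapsto -c(a,b)$ is $\K$-inf-compact on $\A\times\bb$, so the same reasoning supplies the lower semi-continuity and inf-compactness hypotheses for $\{\bb,\A,-c^{\A\leftrightarrow\bb}\}$, which satisfies Definition~\ref{defi:game} by Remark~\ref{rem:sim}. Writing $\hat{\tilde c},\hat{\tilde c}^\sharp,\hat{\tilde c}^\flat$ for the reverse game's quantities and noting that its $\sharp$- and $\flat$-operators involve only a single integral against a pure strategy of one side, a short sign computation gives $\hat{\tilde c}^\sharp(\pi^\bb)=-\hat c^\flat(\pi^\bb)$ and $\hat{\tilde c}^\flat(\pi^\A)=-\hat c^\sharp(\pi^\A)$; hence the reverse lopsided value is $-v$ and its solution set $\mathbb{P}^\sharp_{-v}(\bb)$ coincides with $\mathbb{P}^\flat_v(\bb)$. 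Theorem~\ref{th:exval} then shows $\mathbb{P}^\flat_v(\bb)$ is nonempty, convex and compact, which together with the preceding establishes statement~(ii).

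For statement~(iii) I would invoke Remark~\ref{rem:sol}: since every pair now lies in $\mathbb{P}^S(\A)\times\mathbb{P}^S(\bb)$, the pair $(\pi^\A,\pi^\bb)$ is a solution exactly when $\hat c^\sharp(\pi^\A)\le\hat c^\flat(\pi^\bb)$. Combining this with the inequality $\hat c^\flat(\pi^\bb)\le v\le\hat c^\sharp(\pi^\A)$, valid for all strategies by the infsup equality of the previous step, squeezes all three quantities to $v$; thus the solutions are precisely the pairs with $\pi^\A\in\mathbb{P}^\sharp_v(\A)$ and $\pi^\bb\in\mathbb{P}^\flat_v(\bb)$. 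Statement~(i) is then immediate: statement~(ii) makes both factor sets nonempty, so any choice of one element from each is a solution by statement~(iii).

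The genuinely delicate point, and the main obstacle, is the treatment of Player~II. Theorem~\ref{th:exval} is inherently asymmetric: its supremum is taken only over safe strategies of Player~II and its compactness conclusion concerns only $\mathbb{P}^\sharp_v(\A)$, so the compactness of $\mathbb{P}^\flat_v(\bb)$ cannot be read off directly and must be recovered from the reverse game. The care required there is threefold: confirming through Remark~\ref{rem:sim} that the reverse game obeys Definition~\ref{defi:game}; checking that assumption (b) delivers exactly the $\K$-inf-compactness that Theorem~\ref{th:exval} demands for the $\bb$-player; and using assumption (c) to guarantee that the safe-strategy restriction in the reverse game is vacuous, so that the quantity produced there is genuinely $-v$ rather than an asymmetric surrogate. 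Once this bookkeeping is settled, everything else is routine.
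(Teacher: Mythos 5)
Your proof is correct and follows essentially the same route as the paper's: two applications of Theorem~\ref{th:exval} (to the original game via assumption (a), and to the reverse game $\{\bb,\A,-c^{\A\leftrightarrow\bb}\}$ of Remark~\ref{rem:sim} via assumption (b)), with assumption (c) and Remark~\ref{rem:equivEFAB} removing the safe-strategy asymmetry so the lopsided value becomes the classical value, and Remark~\ref{rem:sol} delivering statements (i) and (iii). Your explicit sign computation identifying the reverse game's set $\mathbb{P}^\sharp_{-v}(\bb)$ with $\mathbb{P}^\flat_{v}(\bb)$ merely spells out bookkeeping that the paper's terser proof leaves implicit.
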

\begin{proof}
Assumptions (b) and Theorem~\ref{th:exval} imply that the game $\{\A,\bb, c \}$ has the lopsided value and $\mathbb{P}^\sharp_{v}(\A)$ is a
  nonempty convex compact subset of $\mathbb{P}(\A).$  In view of Remark~\ref{rem:equivEFAB}, assumption (c) implies that $\mathbb{P}^S(\bb)=\mathbb{P}(\bb)$ and equality \eqref{rem:lopsidedclas} 
   holds. In view of Remark~\ref{rem:lopsided}, this game has the value. Assumption (b) and
Theorem~\ref{th:exval}, being applied to the game $\{\bb,\A, -c^{\A\leftrightarrow\bb} \},$ where $c^{\A\leftrightarrow\bb}(b,a):=c(a,b)$ for each $a\in\A$ and $b\in\bb,$ imply that  the set $\mathbb{P}_{v}^\flat(\bb)$
 is a nonempty convex compact subset of  $\mathbb{P}(\bb).$  Thus, statement (ii) is proved.  Statements (i) and (iii) follow from Remark~\ref{rem:sol}.
\qed \end{proof}

\begin{remark}\label{rem:compg}
{\rm
Assumptions  (b) and (c) of Theorem~\ref{teor:mainonestep} imply that the space of actions $\bb$ for Player~II is compact.
}
\end{remark}
\begin{remark}\label{rem:compgEF}
{\rm
As the proof of Theorem~\ref{teor:mainonestep} shows, assumptions (a) and (b) of Theorem~\ref{teor:mainonestep} can be relaxed.  Assumption (a) can be relaxed to the pair of assumptions (i, ii) from Theorem~\ref{th:exval}.  Assumption (b) can be relaxed to the pair of assumptions symmetric to assumptions (i) and (ii) from Theorem~\ref{th:exval}: for each $a\in \A$ the function $b\mapsto c(a,b)$ is upper semi-continuous, and
there exists $a_0\in \A$ such that the function $b\mapsto c(a_0,b)$ is inf-compact on $\bb.$
}
\end{remark}

\subsection{Continuity Properties of Equilibria}\label{s5.4}

In this section we define and study families of games with action sets and payoff functions depending on a parameter. Let $\X,$ $\A$ and $\bb$ be Borel subsets of Polish spaces, $K_\A\in {\mathcal B}(\X\times\A),$
where ${\mathcal B}(\X\times\A)={\mathcal B} (\X)\otimes {\mathcal
B}(\mathbb{A}),$ $K_\bb\in {\mathcal B}(\X\times\bb),$
where ${\mathcal B}(\X\times\bb)={\mathcal B} (\X)\otimes {\mathcal
B}(\bb).$ It is assumed that for each $x\in\X$ the sets $K_\A$ and $K_\bb$ satisfy the following two conditions:
\[
 A(x):=\{a\in\A\,:\, (x,a)\in K_\A\}\ne\emptyset\quad{\rm and}\quad
 B(x):=\{b\in\bb\,:\, (x,b)\in K_\bb\}\ne\emptyset.
\]

Let \[
\kk:=\{(x,a,b)\in \X\times\A\times\bb\,:\, x\in \X,\, a\in A(x),\, b\in B(x)\}.
\]

\begin{remark}\label{rem:kk}
{\rm We note that $\Gr(A)=K_\A,$ $\Gr(B)=K_\bb,$ and $\kk=\Gr(A\times B),$ where $(A\times B)(x) :=\{(a,b)\,:\, a\in A(x), \, b\in B(x)\},$ $x\in \X.$ We note also that $\kk=\Gr(\tilde{B}),$ where $\tilde{B} (x,a) :=B(x),$ $(x,a)\in K_\A.$ If we set $\tilde{A} (x,b): =A(x),$ $(x,b)\in K_\bb,$ then $\Gr(\tilde{A})=\{(x,b,a)\,:\,(x,a,b)\in\kk\}$ and $\kk=\{(x,a,b)\,:\, (x,b,a)\in\Gr(\tilde{A})\}.$}
\end{remark}

Consider the family of   two-person zero-sum games
\[
\{\{A(x),B(x), c(x,\,\cdot\,,\,\cdot\,)\}\,:\,x\in\X \} 
\]
satisfying for each $x\in\X$ all the assumptions from Definition~\ref{defi:game}.
Define the function $c^{\A\leftrightarrow \bb}:\Gr(\tilde{A})\subset (\X\times\bb)\times\A\mapsto\R,$ \begin{equation}\label{eq:auxil3silm}
c^{\A\leftrightarrow \bb}(x,b,a):=c(x,a,b),\quad (x,a,b)\in \mathcal{K}.
\end{equation}

Let us consider the following assumptions. 

\noindent {\bf Assumption (A1)} The function $c^{\A\leftrightarrow \bb}:\Gr(\tilde{A})\subset (\X\times\bb)\times\A\mapsto\mathbb{R}$ defined in (\ref{eq:auxil3silm}) is $\K$-inf-compact on $\Gr(\tilde{A}).$ 

\noindent {\bf Assumption (A2)} The function $c:\kk\subset (\X\times\A)\times\bb\mapsto\mathbb{R}$  is $\K$-sup-compact on $\kk.$  

\noindent {\bf Assumption (A3)} $A:\X\mapsto S(\A)$ is a lower semi-continuous set-valued mapping.

\noindent {\bf Assumption (A4)} $B:\X\mapsto S(\bb)$ is a lower semi-continuous set-valued mapping.

\begin{remark}\label{rem:kinfcomp}
{\rm According to Lemma~\ref{k-inf-compact} and Remarks~\ref{rem:substit}, \ref{rem:kk}, Assumption (A1) holds if and only if the following two conditions hold:  
\begin{itemize}
\item[(i)] the mapping $c:\kk\subset\X\times\A\times\bb\mapsto\mathbb{R}$ is lower semi-continuous; 
\item[(ii)] if a sequence $\{x^{(n)},b^{(n)}\}_{n=1,2,\ldots}$ with values in $K_\bb$
converges and its limit $(x,b)$ belongs to $K_\bb,$ then each sequence $\{a^{(n)}
\}_{n=1,2,\ldots}$ with $(x^{(n)},a^{(n)},b^{(n)})\in \kk,$ $n=1,2,\ldots,$ satisfying
the condition that the sequence\\ $\{c(x^{(n)},a^{(n)},b^{(n)}) \}_{n=1,2,\ldots}$ is
bounded above, has a limit point $a\in A(x).$
\end{itemize}}
\end{remark}
\begin{remark}\label{rem:ksupcomp}
{\rm According to Lemma~\ref{k-inf-compact} and Remark~\ref{rem:kk}, Assumption (A2) holds if and only if the following two conditions hold:
\begin{itemize}
\item[(i)] the mapping $c:\kk\subset\X\times\A\times\bb\mapsto\mathbb{R}$ is upper semi-continuous; 
\item[(ii)] if a sequence $\{x^{(n)},a^{(n)} \}_{n=1,2,\ldots}$ with values in $K_\A$
converges and its limit $(x,a)$ belongs to $K_\A,$ then each sequence $\{b^{(n)}
\}_{n=1,2,\ldots}$ with\\ $(x^{(n)},a^{(n)},b^{(n)})\in \kk,$ $n=1,2,\ldots,$ satisfying
the condition that the sequence $\{c(x^{(n)},a^{(n)},b^{(n)}) \}_{n=1,2,\ldots}$ is
bounded from below, has a limit point $b\in B(x).$
\end{itemize}}
\end{remark}
\begin{remark}\label{remFINc} 
{\rm   Assumptions~(A1) and (A2) imply that the {payoff} to  Player II, 
$ c(x,a,b)$ for choosing actions $a\in A(x)$ and $b\in B(x)$ in a state $x\in\X,$ is  continuous.} 
\end{remark}

\begin{remark}\label{rembarAEF} {\rm
If the function $c$ takes values in $\overline{\mathbb{R}}$ instead of  $\mathbb{R}$ in Assumptions~(A1) and (A2),  then Remarks~\ref{rem:kinfcomp} and \ref{rem:ksupcomp} are also applicable to such functions.  However, we consider only real-valued payoff functions $c$ in this paper.} 
\end{remark} 

Let $\{\{A(x),B(x), c(x,\,\cdot\,,\,\cdot\,)\}\,:\,x\in\X \}$ be  the family of   two-person zero-sum games, that is, each of these games satisfies assumptions in Definition~\ref{defi:game}. Further let 
${\hat{c}}^{\sharp}(x)$ and ${\hat{c}}^{\flat}(x)$ be defined in (\ref{eq:maxmin}) and $v(x)$ denote the  lopsided value of the game $\{A(x),B(x), c(x,\cdot,\cdot)\}$ if it exists, 
$x\in\X$ (in Theorem~\ref{teor:maincont of equil} $v(x)$ is the value). 

The following theorem provides sufficient conditions for the lower semi-continuity of the lopsided value for a family of two-person zero-sum games with
possibly noncompact action sets and unbounded payoffs. 
\begin{theorem}\label{teor:mainlsc of equil}
Let the family of   two-person zero-sum games\\ $\{\{A(x),B(x), c(x,\,\cdot\,,\,\cdot\,)\}\,:\,x\in\X \}$ satisfy Assumptions~{\rm(A1)} and {\rm(A4)}.  Then the following statements hold:
\begin{itemize}
\item[{\rm(i)}] for each $x\in\X$ the following equality holds: 
\begin{equation}\label{eq:v1}
\sup\limits_{\pi^\bb\in \mathbb{P}^S(B(x))}{\hat{c}}^{\flat}(x,\pi^\bb)=\inf\limits_{\pi^\A\in \mathbb{P}(A(x))} {\hat{c}}^{\sharp}(x,\pi^\A) \,\left(=:v(x)\right).
\end{equation}
Moreover, $v:\X\mapsto\mathbb{R}$ is a lower semi-continuous function;
\item[{\rm(ii)}] the sets $\{\mathbb{P}^\sharp_{v(x)}(A(x))\,:x\in\X\}$
satisfy the following properties:
\begin{itemize}
\item[{\rm(a)}]   
for each $x\in\X$ the set
$\mathbb{P}^\sharp_{v(x)}(A(x))$  is a nonempty convex compact subset of $\mathbb{P}(\A);$
\item[{\rm(b)}]
the graph ${\rm Gr}(\mathbb{P}^\sharp_{v(\,\cdot\,)}(A(\,\cdot\,)))=\{(x,\pi^\A):\, x\in\X, \pi^\A\in
\mathbb{P}^\sharp_{v(x)}(A(x))\}$ is a Borel subset of   $\X\times \mathbb{P}(\A);$
\item[{\rm(c)}] there exists a measurable mapping $\phi^\A:\X\mapsto\mathbb{P}(\A)$ such that $\phi^\A(x)\in \mathbb{P}^\sharp_{v(x)}(A(x))$ for each $x\in\X.$
\end{itemize}
\end{itemize}
\end{theorem}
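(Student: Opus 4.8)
The plan is to derive everything from two results established earlier: the pointwise existence Theorem~\ref{th:exval}, applied to each individual game $\{A(x),B(x),c(x,\,\cdot\,,\,\cdot\,)\}$, and the minimax continuity Theorem~\ref{th:minimax_lsc}, applied to the whole family with Player~I's mixed strategies playing the role of the minimizing action variable and Player~II's pure actions playing the role of the maximizing variable. Throughout I write $\hat c(x,\pi^\A,b):=\int_\A c(x,a,b)\pi^\A(da)$.

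First I would dispose of the equality in statement~(i) and of statement~(ii)(a) pointwise. Fix $x\in\X$. Assumption~(A1), in the form of Remark~\ref{rem:kinfcomp}, gives that $a\mapsto c(x,a,b)$ is lower semi-continuous for every $b\in B(x)$; moreover, taking the compact singleton $C=\{(x,b_0)\}$ in Definition~\ref{def:kinf} shows that $a\mapsto c(x,a,b_0)$ is inf-compact on $A(x)$ for each $b_0\in B(x)$. Hence the game $\{A(x),B(x),c(x,\,\cdot\,,\,\cdot\,)\}$ satisfies hypotheses~(i) and~(ii) of Theorem~\ref{th:exval}, which yields the equality (\ref{eq:v1}) and the fact that $\mathbb{P}^\sharp_{v(x)}(A(x))$ is a nonempty convex compact subset of $\mathbb{P}(\A)$; this is statement~(ii)(a). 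Finiteness of $v(x)$ also follows: nonemptiness of $\mathbb{P}^\sharp_{v(x)}(A(x))$ forces $v(x)<+\infty$, while $v(x)\ge\min_{a\in A(x)}c(x,a,b_0)>-\infty$ by the inf-compactness just noted.

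Next I would set up the minimax framework of Section~\ref{sec:minimax} with $\Xx:=\X$, $\Aa:=\mathbb{P}(\A)$, $\Bb:=\bb$, $\Phii_\Aa(x):=\mathbb{P}(A(x))$, $\Phii_\Bb(x,\pi^\A):=B(x)$, and $\ff(x,\pi^\A,b):=\hat c(x,\pi^\A,b)$, so that $\vv^\sharp=v$ and $\Phii_\Aa^*(x)=\mathbb{P}^\sharp_{v(x)}(A(x))$. It then remains to check the two hypotheses of Theorem~\ref{th:minimax_lsc}. Since $\Phii_\Bb(x,\pi^\A)=B(x)$ does not depend on $\pi^\A$ and $B$ is lower semi-continuous by Assumption~(A4), Lemma~\ref{lem:unifAlsc}(ii) shows that $\Phii_\Bb$ is $\Aa$-lower semi-continuous. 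The main obstacle is the remaining hypothesis, namely $\K$-inf-compactness of $\ff^{\Aa\leftrightarrow\Bb}(x,b,\pi^\A)=\hat c(x,\pi^\A,b)$ on its graph $\{(x,b,\pi^\A)\,:\,b\in B(x),\ \pi^\A\in\mathbb{P}(A(x))\}$, the difficulty being that the set $A(x)$ over which one integrates depends on the state. To handle it I would first extend $c^{\A\leftrightarrow \bb}$ to the genuine product $K_\bb\times\A$ by setting $\bar c((x,b),a):=c(x,a,b)$ when $a\in A(x)$ and $\bar c((x,b),a):=+\infty$ otherwise. For finite $\lambda$ and every compact $C\subset K_\bb$, the level set $\mathcal{D}_{\bar c}(\lambda;C\times\A)$ coincides with $\mathcal{D}_{c^{\A\leftrightarrow \bb}}(\lambda;\Gr_C(\tilde A))$, which is compact by Assumption~(A1); hence $\bar c$ is $\K$-inf-compact on $K_\bb\times\A$. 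Corollary~\ref{cor:K-inf-comp}, applied with $\X:=K_\bb$ and $\Y:=\A$, then shows that $\hat{\bar c}((x,b),\pi^\A):=\int_\A\bar c((x,b),a)\pi^\A(da)$ is $\K$-inf-compact on $K_\bb\times\mathbb{P}(\A)$. Since $\hat{\bar c}((x,b),\pi^\A)=\hat c(x,\pi^\A,b)$ when $\pi^\A\in\mathbb{P}(A(x))$ and equals $+\infty$ otherwise, its finite level sets coincide with those of $\ff^{\Aa\leftrightarrow\Bb}$, so $\ff^{\Aa\leftrightarrow\Bb}$ is $\K$-inf-compact on its graph.

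Finally, Theorem~\ref{th:minimax_lsc} applies and delivers the lower semi-continuity of $v=\vv^\sharp$ (completing statement~(i)), the attainment of the minimum in (\ref{eq:v1}), the compactness of $\Phii_\Aa^*(x)=\mathbb{P}^\sharp_{v(x)}(A(x))$ using $v(x)<+\infty$ (reconfirming (ii)(a)), and the fact that $\Gr(\Phii_\Aa^*)=\Gr(\mathbb{P}^\sharp_{v(\,\cdot\,)}(A(\,\cdot\,)))$ is Borel, which is statement~(ii)(b). For statement~(ii)(c), this graph is a Borel set with nonempty compact sections in the Polish space $\mathbb{P}(\A)$, so a measurable selector $\phi^\A$ exists by a standard measurable selection (uniformization) theorem of Arsenin--Kunugui / Kuratowski--Ryll-Nardzewski type. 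The only genuinely delicate point in the whole argument is the state-dependence of the integration domain $A(x)$ in the $\K$-inf-compactness step, which is precisely why the $+\infty$-extension to a true product is inserted before invoking Corollary~\ref{cor:K-inf-comp}.
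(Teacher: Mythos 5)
Your proposal is correct and follows essentially the same route as the paper's proof: pointwise application of Theorem~\ref{th:exval} for equality \eqref{eq:v1} and statement (ii)(a), the $+\infty$-extension of $c^{\A\leftrightarrow\bb}$ combined with Corollary~\ref{cor:K-inf-comp} to obtain $\K$-inf-compactness over mixed strategies, Lemma~\ref{lem:unifAlsc}(b) for $\Aa$-lower semi-continuity, and Theorem~\ref{th:minimax_lsc} for statements (i) and (ii)(b); the only deviations are that the paper performs the extension over the full product $\X\times\bb$ while you work over $K_\bb$ (a harmless, arguably cleaner, refinement), and that the paper derives the selector in (ii)(c) from Feinberg et al.~\cite[Theorem~3.3]{Feinberg et al} where you invoke an Arsenin--Kunugui-type uniformization theorem, which is equally valid. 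Your one micro-slip---inferring $v(x)<+\infty$ from nonemptiness of $\mathbb{P}^\sharp_{v(x)}(A(x))$---is inessential, since finiteness follows at once from condition (v) of Definition~\ref{defi:game} applied to any pure strategy of Player~I.
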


\begin{proof}
Assumption (A1) and Corollary~\ref{cor:K-inf-comp}, being applied to $\X:=\X\times\bb$ (that is, the state space is $\X\times\bb$),  $\Y:=\A,$ $f:=c^{\A\leftrightarrow \bb}$ on $\Gr(\tilde{A}),$ and $f:=+\infty$ on the complement of $\Gr(\tilde{A}),$ imply that the mapping $\hat{c}^{\A\leftrightarrow \bb}:\Gr(\mathbb{P}(\tilde{A}(\,\cdot\,,\,\cdot\,)))\subset (\X\times\bb)\times\mathbb{P}(\A)\mapsto\mathbb{R},$ where  
\[
\hat{c}^{\A\leftrightarrow \bb}(x,b,\pi^\A)= \int_{A(x)} c(x,a,b)\pi^\A(da),\  (x,b)\in \K_\bb,\, \pi^\A\in \mathbb{P}(\tilde{A}(x,b))=\mathbb{P}({A}(x)), 
\]
is $\K$-inf-compact on $\Gr(\mathbb{P}(\tilde{A}(\,\cdot\,,\,\cdot\,))).$ Identity~(\ref{eq:v1}) follows from Theorem~\ref{th:exval}. The  remaining statements 
follow from Theorem~\ref{th:minimax_lsc}, being applied to $\Xx:=\X,$ $\Aa:=\mathbb{P}(\A),$ $\Bb:=\bb,$ $\Phii_\Aa(\,\cdot\,):=\mathbb{P}(A(\,\cdot\,)),$  $\Phii_\Bb(x,\pi^\A):=B(x),$ $x\in\X,$ and $f(x,\pi^\A,b):={\hat{c}}(x,\pi^\A,b),$ $(x,\pi^\A,b)\in\{(x,\pi^\A,b)\in \X\times\mathbb{P}(\A)\times\bb\,:\, (x,b)\in\K_\bb,\, \pi^\A\in\mathbb{P}({A}(x))\},$ from Lemma~\ref{lem:unifAlsc}, and from Feinberg et al. \cite[Theorem~3.3]{Feinberg et al}.
\qed \end{proof} 

The following example describes a family of two-person zero-sum games satisfying Assumptions (A1) and (A4).  Payoff functions are unbounded and decision sets are noncompact for the games in this family.
\begin{example}\label{exa:noncomp}
{\rm
Let $\X=\A=\bb=\mathbb{R},$ $K_\A=K_\bb=\mathbb{R}^2,$ $\mathcal{K}=\mathbb{R}^3,$ $c(x,a,b)=\varphi_\X(x)+\varphi_\A(a)+\varphi_\bb(b),$ $(x,a,b)\in \mathcal{K},$ where
$\varphi_\X,\varphi_\A,\varphi_\bb:\mathbb{R}\mapsto \mathbb{R}$ are continuous functions such that
$\varphi_\A(a)\to+\infty$ as $|a|\to\infty.$ Then $c$ is a continuous function on $\mathbb{R}^3$ and it satisfies Assumption (A1). Indeed, let a sequence $\{x^{(n)},b^{(n)} \}_{n=1,2,\ldots}$ with values in $\mathbb{R}^2$
converges and its limit $(x,b)$ belongs to $\mathbb{R}^2,$ a sequence $\{a^{(n)}
\}_{n=1,2,\ldots}$ with $(x^{(n)},a^{(n)},b^{(n)})\in\mathbb{R}^3,$ $n=1,2,\ldots,$ satisfy
the condition that the sequence $\{c(x^{(n)},a^{(n)},b^{(n)}) \}_{n=1,2,\ldots}$ is
bounded above. Then the sequence $\{\varphi_\A(a^{(n)}) \}_{n=1,2,\ldots}$
is
bounded above and, since $\varphi_\A(a)\to+\infty$ as $|a|\to\infty,$ then the sequence $\{a^{(n)}
\}_{n=1,2,\ldots}$
has a limit point $a\in A(x)=\mathbb{R}.$ Therefore, Assumption (A1) holds. Assumption (A4) holds, because the multi-valued mapping $\Phi:\mathbb{R}\mapsto S({\mathbb{R}}),$ $\Phi(s)=\mathbb{R},$ $s\in\mathbb{R},$ is lower semi-continuous on $\mathbb{R}.$
}
\end{example}

The following theorem and its corollary describes sufficient conditions for continuity of the value function and upper semi-continuity of the solution multifunctions for a family of   two-person zero-sum games with possibly noncompact action sets and unbounded payoffs.

\begin{theorem}\label{teor:maincont of equil}{\rm(Continuity of equilibria)}
Let a family of   two-person zero-sum games $\{\{A(x),B(x), c(x,\,\cdot\,,\,\cdot\,)\}\,:\,x\in\X \}$ satisfy Assumptions~{\rm(A1)--(A4)} and $\bb$ be  compact. Then the following statements hold: 
\begin{itemize}
\item[{\rm(i)}] for each $x\in\X$
the game $\{A(x),B(x), c(x,\,\cdot\,,\,\cdot\,)\}$ has a solution
$(\pi^\A,\pi^\bb)\in \mathbb{P}^\sharp_{v(x)}(A(x))\times \mathbb{P}_{v(x)}^\flat(B(x)).$ Moreover, 
$v:\X\mapsto\mathbb{R}$ is a continuous function;
%

\item[{\rm(ii)}] the sets $\{\mathbb{P}^\sharp_{v(x)}(A(x)):x\in\X\}$
satisfy the following properties:
\begin{itemize}
\item[{\rm(a)}] for each $x\in \X$ the set
$\mathbb{P}^\sharp_{v(x)}(A(x))$  is a nonempty convex compact subset of $\mathbb{P}(\A);$ 
\item[{\rm(b)}] the multifunction $\mathbb{P}^\sharp_{v(\,\cdot\,)}(A(\,\cdot\,)):\X\mapsto \K(\mathbb{P}(\A))$ is upper semi-continuous;
\end{itemize}
\item[{\rm(iii)}] the sets $\{\mathbb{P}^\flat_{v}(B(x)):x\in\X\}$
satisfy the following properties:
\begin{itemize}
\item[{\rm(a)}] for each $x\in \X$ the set
$\mathbb{P}^\flat_{v(x)}(B(x))$  is a nonempty convex compact subset of $\mathbb{P}(\bb);$ 
\item[{\rm(b)}] the multifunction $\mathbb{P}^\flat_{v(\,\cdot\,)}(B(\,\cdot\,)):\X\mapsto \K(\mathbb{P}(\bb))$ is upper semi-continuous.
    \end{itemize}
\end{itemize}
\end{theorem}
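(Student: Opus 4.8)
The plan is to reduce the family of games to the minimax framework of Section~\ref{sec:minimax} after lifting both players to mixed strategies, and then to apply that framework twice — once for each player — using the preservation results of Section~\ref{s4} to carry $\K$-inf- and $\K$-sup-compactness from $c$ to its expectation $\hat c$. For the first (Player~I) direction I would set $\Xx:=\X$, $\Aa:=\mathbb{P}(\A)$, $\Bb:=\bb$, $\Phii_\Aa(x):=\mathbb{P}(A(x))$, $\Phii_\Bb(x,\pi^\A):=B(x)$, and $\ff(x,\pi^\A,b):=\hat c(x,\pi^\A,b)$. By Theorem~\ref{cor:mainI} the supremum over pure $b\in B(x)$ reproduces $\hat c^\sharp(x,\pi^\A)$, so the associated minimax $\vv^\sharp$ equals the lopsided value $v$ and the solution multifunction $\Phii_\Aa^*$ equals $\mathbb{P}^\sharp_{v(\cdot)}(A(\cdot))$.

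In this first direction I would verify the hypotheses of the minimax theorems as follows: Assumption~(A3) makes $\Phii_\Aa=\mathbb{P}(A(\cdot))$ lower semi-continuous (preservation of lower semi-continuity of set-valued maps under passage to probability measures); (A4) together with Lemma~\ref{lem:unifAlsc} makes $\Phii_\Bb$ $\Aa$-lower semi-continuous, since $B(x)$ does not depend on $\pi^\A$; (A1) and Corollary~\ref{cor:K-inf-comp} give $\K$-inf-compactness of $\ff^{\Aa\leftrightarrow\Bb}=\hat c^{\A\leftrightarrow\bb}$, exactly as in the proof of Theorem~\ref{teor:mainlsc of equil}; and (A2), together with compactness of $\bb$, gives $\K$-sup-compactness of $\ff=\hat c$. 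Then lower semi-continuity of $v$ and nonempty convex compactness of $\mathbb{P}^\sharp_{v(x)}(A(x))$ follow from Theorem~\ref{teor:mainlsc of equil} (convexity from Lemma~\ref{lem:con}); upper semi-continuity of $v$ follows from Theorem~\ref{th:minimax_usc}, so $v$ is continuous; and Theorem~\ref{th: uscphii1} yields that $\mathbb{P}^\sharp_{v(\cdot)}(A(\cdot))$ is upper semi-continuous and compact-valued, which is statement~(ii).

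For statement~(iii) I would run the same scheme on the transposed family $\{B(x),A(x),-c^{\A\leftrightarrow\bb}(x,\cdot,\cdot)\}$ (Remark~\ref{rem:sim}), now with Player~II as the minimizer: $\Aa:=\mathbb{P}(\bb)$, $\Bb:=\A$, $\Phii_\Aa(x):=\mathbb{P}(B(x))$, $\Phii_\Bb(x,\pi^\bb):=A(x)$, $\ff(x,\pi^\bb,a):=-\hat c(x,a,\pi^\bb)$. Here (A4) gives lower semi-continuity of $\Phii_\Aa$, (A3) and Lemma~\ref{lem:unifAlsc} give $\Aa$-lower semi-continuity of $\Phii_\Bb$, and (A2) with Corollary~\ref{cor:K-inf-comp} gives $\K$-inf-compactness of the transposed $\ff^{\Aa\leftrightarrow\Bb}$; compactness of $\bb$ makes condition~(ii) of the transposed Theorem~\ref{th:exval} automatic (via (A2)) and makes $\mathbb{P}(\bb)$ compact. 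Since $v$ is continuous and, as explained below, coincides with the transposed value, Theorem~\ref{th: uscphii1} applied to the transposed problem yields that $\mathbb{P}^\flat_{v(\cdot)}(B(\cdot))$ is nonempty, convex, compact-valued, and upper semi-continuous, which is~(iii). Finally, nonemptiness of $\mathbb{P}^\sharp_{v(x)}(A(x))$ and $\mathbb{P}^\flat_{v(x)}(B(x))$ together with the equality $\hat c^\sharp(\pi^\A)=v(x)=\hat c^\flat(\pi^\bb)$ for strategies drawn from them give, via Remark~\ref{rem:sol}, a solution of the game and complete statement~(i).

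The decisive difficulty is the asymmetry created by the possibly undefined expected payoff: the lopsided value is a supremum over the \emph{safe} strategies of Player~II, whereas the transposed minimax is a supremum over \emph{all} of $\mathbb{P}(B(x))$. The heart of the argument is to show that these agree — that unsafe strategies of Player~II cannot raise $\sup\hat c^\flat$ above $v(x)$ — so that the transposed solution multifunction is exactly $\mathbb{P}^\flat_{v(x)}(B(x))$. This is precisely where compactness of $\bb$ and Assumption~(A2) are indispensable: (A2) forces $c$ to be coercive in $b$ along the noncompact directions of $B(x)$, driving $\hat c^\flat$ to $-\infty$ on the offending strategies, while compactness of $\bb$ secures compactness of $\mathbb{P}(B(x))$, which the solution set inherits. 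The accompanying technical hurdle is that $\hat c$ is only $\mathbb{R}\cup\{+\infty\}$-valued, so the packaged Theorem~\ref{th: BergeMinimax} must be replaced by its $\overline{\mathbb{R}}$-valued constituents (Theorems~\ref{th:minimax_usc} and~\ref{th: uscphii1}), and the $\K$-compactness must be transported through the mixed extension with care.
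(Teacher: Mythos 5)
Your overall architecture (two passes through the Section~\ref{sec:minimax} machinery, with Corollary~\ref{cor:K-inf-comp} and Lemma~\ref{lem:unifAlsc} feeding Theorem~\ref{teor:mainlsc of equil} and Theorem~\ref{th: uscphii1}) matches the paper's, and your first-direction lower semi-continuity argument is exactly the paper's proof of Theorem~\ref{teor:mainlsc of equil}. However, two steps on which the rest of your proof rests are genuinely flawed. The first is the claim that Assumption~(A2) together with compactness of $\bb$ gives $\K$-sup-compactness of $\ff=\hat c$ on $(\X\times\mathbb{P}(\A))\times\bb$, which you need in order to invoke Theorem~\ref{th:minimax_usc} for the upper semi-continuity of $v$. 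Here the lifted variable $\pi^\A$ sits on the \emph{state} side of the sup-compactness, and the only preservation result of that type, Theorem~\ref{th:K-inf-comp}, requires one-sided boundedness of the payoff (for the sup version, $c$ bounded from above), which is not assumed. Concretely, take $\X=[0,1],$ $\A=[0,\infty),$ $\bb=[0,1],$ $A(x)=\A,$ $B(x)=\bb,$ and $c(x,a,b)=a$: Assumptions~(A1)--(A4) and compactness of $\bb$ all hold, but for $\pi_n=(1-\tfrac1n)\delta_0+\tfrac1n\delta_n\to\delta_0$ weakly one has $\hat c(x,\pi_n,b)=1$ while $\hat c(x,\delta_0,b)=0$, so $\hat c$ is not even upper semi-continuous in $\pi^\A$, let alone $\K$-sup-compact; thus your route to continuity of $v$ collapses under the very hypotheses of the theorem. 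The paper never lifts the maximizing player: it gets upper semi-continuity of $v$ by applying Theorem~\ref{teor:mainlsc of equil} to the transposed family $\{B(x),A(x),-c^{\A\leftrightarrow\bb}(x,\cdot,\cdot)\}$, where the lifting is again of the minimizer's own (second) variable, handled by Corollary~\ref{cor:K-inf-comp} with no boundedness assumption.

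The second and more fundamental gap concerns what you yourself call the decisive difficulty: identifying the transposed value $\inf_{\pi^\bb\in\mathbb{P}(B(x))}\sup_{a}(-\hat c)$ with $-v(x)$, i.e., showing that unsafe strategies of Player~II cannot raise $\sup\hat c^\flat$ above $v(x)$. Your proposed mechanism --- that (A2) forces $\hat c^\flat(\pi^\bb)=-\infty$ on the offending strategies --- is refuted by the paper's own Example~\ref{exa1}: there $(b,a)\mapsto c(a,b)$ is $\K$-inf-compact, $(a,b)\mapsto c(a,b)$ is $\K$-sup-compact, and yet an \emph{unsafe} $\pi^\bb$ has $\hat c^\flat(\pi^\bb)>-\infty$; moreover the example survives making the ambient space $\bb$ compact (embed $\{1,2,\ldots\}$ as $\{1/m:m\ge1\}\subset[0,1]$ and take $B(x)=\{1/m:m\ge1\}$, which is Borel but not closed, so compactness of $\bb$ does not make $B(x)$ compact). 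Hence unsafe strategies with finite $\hat c^\flat$ can exist under your hypotheses, and your argument provides no bound $\hat c^\flat(\pi^\bb)\le v(x)$ for them, leaving statements~(i) and (iii) --- and, through them, the continuity of $v$ --- unproven. The paper closes this gap by a different mechanism: it invokes Theorem~\ref{teor:mainonestep} together with Remark~\ref{rem:compg} to produce a saddle point, and by Remark~\ref{rem:sol} the existence of a saddle point of safe strategies is precisely what forces the lopsided values of the game and of its transpose to coincide with the common value $v(x)$; only after this identification do the two applications of Theorem~\ref{teor:mainlsc of equil}, combined with Theorem~\ref{th: uscphii1}, deliver continuity of $v$ and the upper semi-continuity of both solution multifunctions. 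A related loose end in your statement~(i): Remark~\ref{rem:sol} characterizes solutions only among pairs of \emph{safe} strategies, and safety of the elements of $\mathbb{P}^\sharp_{v(x)}(A(x))$ and $\mathbb{P}^\flat_{v(x)}(B(x))$ is not established by your argument either; in the paper this, too, comes out of the route through Theorem~\ref{teor:mainonestep}.
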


\begin{proof}
In view of Theorem~\ref{teor:mainonestep} and Remark~\ref{rem:compg}, Theorem~\ref{teor:mainlsc of equil}, being applied to $\{\{A(x),B(x), c(x,\,\cdot\,,\,\cdot\,)\}\,:\,x\in\X \}$ and $\{\{B(x),A(x), -c^{\A\leftrightarrow\bb}(x,\,\cdot\, ,\,\cdot\,)\}\,:\,x\in\X \},$ where $c^{\A\leftrightarrow\bb}(x,b,a):=c(x,a,b)$ for each $x\in\X,$ $a\in A(x)$ and $b\in B(x),$ implies all the statements of the theorem.
\qed \end{proof} 

\begin{corollary}\label{corteor:maincont of equil}
Let a family of   two-person zero-sum games\\ $\{\{A(x),B(x), c(x,\,\cdot\,,\,\cdot\,)\}\,:\,x\in\X \}$  satisfy assumptions of Theorem~\ref{teor:maincont of equil}. Then there exist measurable mappings $\phi^\A:\X\mapsto\mathbb{P}(\A)$ and $\phi^\bb:\X\mapsto\mathbb{P}(\bb)$
such that $\phi^\A(x)\in \mathbb{P}^\sharp_{v(x)}(A(x))$ and $\phi^\bb(x)\in \mathbb{P}^\flat_{v(x)}(B(x))$ for all $x\in\X.$ Moreover, for each $x\in\X$
     a pair of 
  strategies $(\pi^\A(x),\pi^\bb(x))\in \mathbb{P}(A(x))\times \mathbb{P}(B(x))$
  is a solution of the game $\{A(x),B(x), c(x,\cdot,\cdot) \}$ if and only if
  $\pi^\A(x)\in \mathbb{P}^\sharp_{v(x)}(A(x))$ and  $\pi^\bb(x)\in \mathbb{P}_{v(x)}^\flat(B(x)).$
  \end{corollary}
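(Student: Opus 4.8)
The plan is to obtain both assertions of the corollary directly from results already proved for the family of games, handling the measurable selections and the characterization of solutions separately.

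First I would produce $\phi^\A$. Since the family $\{\{A(x),B(x),c(x,\,\cdot\,,\,\cdot\,)\}:x\in\X\}$ satisfies Assumptions~(A1) and (A4), Theorem~\ref{teor:mainlsc of equil}(ii)(c) applies verbatim and furnishes a measurable mapping $\phi^\A:\X\mapsto\mathbb{P}(\A)$ with $\phi^\A(x)\in\mathbb{P}^\sharp_{v(x)}(A(x))$ for every $x\in\X$. Next I would produce $\phi^\bb$ by the symmetrization already exploited in the proof of Theorem~\ref{teor:maincont of equil}. Consider the family $\{\{B(x),A(x),-c^{\A\leftrightarrow\bb}(x,\,\cdot\,,\,\cdot\,)\}:x\in\X\}$, where $c^{\A\leftrightarrow\bb}(x,b,a)=c(x,a,b)$. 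For this family the analogue of Assumption~(A1) is exactly Assumption~(A2), and the analogue of Assumption~(A4) is exactly Assumption~(A3); both hold. Hence Theorem~\ref{teor:mainlsc of equil}(ii)(c), applied to this symmetric family, yields a measurable mapping selecting, for each $x$, an optimal strategy of the first player of the symmetric game.

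By the role-swap and sign bookkeeping used in the proof of Theorem~\ref{teor:maincont of equil}(iii), the optimal set of that first player coincides with $\mathbb{P}^\flat_{v(x)}(B(x))$, so this selector is the desired $\phi^\bb:\X\mapsto\mathbb{P}(\bb)$ with $\phi^\bb(x)\in\mathbb{P}^\flat_{v(x)}(B(x))$ for every $x\in\X$. Here one checks that passing to $-c^{\A\leftrightarrow\bb}$ negates the value (so the symmetric game has value $-v(x)$) and converts the first-player $\mathbb{P}^\sharp$-sets of the symmetric game into $\{\pi^\bb:\hat c^\flat(x,\pi^\bb)\ge v(x)\}=\mathbb{P}^\flat_{v(x)}(B(x))$.

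Finally, the characterization of solutions is pointwise in $x$. For each fixed $x$, the game $\{A(x),B(x),c(x,\,\cdot\,,\,\cdot\,)\}$ satisfies the hypotheses of Theorem~\ref{teor:mainonestep}, which is precisely what underlies Theorem~\ref{teor:maincont of equil} (Assumption~(A1) together with compactness of $\bb$ forces $c(x,\,\cdot\,,\,\cdot\,)$ to be bounded from below, whence $\mathbb{P}^S(B(x))=\mathbb{P}(B(x))$, as recorded in Remark~\ref{rem:sol}). Therefore Theorem~\ref{teor:mainonestep}(iii) (equivalently, Remark~\ref{rem:sol}) gives that a pair $(\pi^\A(x),\pi^\bb(x))\in\mathbb{P}(A(x))\times\mathbb{P}(B(x))$ is a solution of $\{A(x),B(x),c(x,\,\cdot\,,\,\cdot\,)\}$ if and only if $\pi^\A(x)\in\mathbb{P}^\sharp_{v(x)}(A(x))$ and $\pi^\bb(x)\in\mathbb{P}^\flat_{v(x)}(B(x))$.

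The bulk of the analytic work is already contained in the cited statements, so there is no serious new obstacle; the one point demanding care is the symmetrization step for $\phi^\bb$, where I must verify that the substitution $\{B(x),A(x),-c^{\A\leftrightarrow\bb}\}$ turns Assumptions~(A2) and (A3) into the (A1) and (A4) hypotheses required by Theorem~\ref{teor:mainlsc of equil}, and that the first-player optimal sets of the symmetric game are exactly $\{\mathbb{P}^\flat_{v(x)}(B(x))\}_{x\in\X}$, so that the measurable selector delivered by Theorem~\ref{teor:mainlsc of equil}(ii)(c) is genuinely a selection from the Player~II optimal sets of the original family.
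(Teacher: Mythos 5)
Your two selector constructions are correct and are, in substance, the paper's own argument with one level of citation unpacked: the paper proves this corollary by invoking statements (ii) and (iii) of Theorem~\ref{teor:maincont of equil}, and those statements are themselves obtained by applying Theorem~\ref{teor:mainlsc of equil} to the original family and to the swapped family $\{\{B(x),A(x),-c^{\A\leftrightarrow\bb}(x,\,\cdot\,,\,\cdot\,)\}\,:\,x\in\X\}$ --- exactly the symmetrization you carry out, with swapped (A1) being (A2) and swapped (A4) being (A3). Your route is in fact slightly more economical, since Theorem~\ref{teor:mainlsc of equil}(ii)(c) asserts the measurable selector explicitly, whereas extracting one from the upper semi-continuity and compactness in Theorem~\ref{teor:maincont of equil}(ii)--(iii) needs an extra selection-theorem citation. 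Note also that for $\phi^\bb$ you do not need the full identification of the swapped value with $-v(x)$: by \eqref{eq:v1} applied to the swapped family, its lopsided value $v_{\rm sw}(x)$ satisfies $-v_{\rm sw}(x)=\sup_{\pi^\bb\in\mathbb{P}(B(x))}{\hat{c}}^{\flat}(x,\pi^\bb)\ge v(x)$, so the set Theorem~\ref{teor:mainlsc of equil}(ii)(c) selects from, namely $\{\pi^\bb\,:\,{\hat{c}}^{\flat}(x,\pi^\bb)\ge -v_{\rm sw}(x)\}$, is contained in $\mathbb{P}^\flat_{v(x)}(B(x))$; containment is all the corollary asks of $\phi^\bb$.

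The gap is in your final step. The parenthetical claim that Assumption (A1) together with compactness of $\bb$ forces $c(x,\,\cdot\,,\,\cdot\,)$ to be bounded from below is false, so hypothesis (c) of Theorem~\ref{teor:mainonestep} is not available and the pointwise invocation of Theorem~\ref{teor:mainonestep}(iii) is unjustified as written. $\K$-inf-compactness constrains the payoff only over compact subsets of $K_\bb$, and $\{x\}\times B(x)$ need not be compact because $B(x)$ need not be closed in $\bb$. Concretely, take $\X=\{0\}$, $\A=A(0)=\{1,2,\ldots\}$, $\bb=[0,1]$, $B(0)=(0,1]$, and $c(0,a,b)=a-1/b$. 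Conditions (iv) and (v) of Definition~\ref{defi:game} hold ($a-1/b\ge 1-1/b$ for fixed $b$, and $a-1/b\le a-1$ for fixed $a$), Assumptions (A1)--(A4) hold (on a compact subset of $K_\bb$ the variable $b$ stays bounded away from $0$, and compact subsets of $K_\A$ are finite), and $\bb$ is compact; yet $c(0,\,\cdot\,,\,\cdot\,)$ is unbounded below, and in fact $\mathbb{P}^S(B(0))\ne\mathbb{P}(B(0))$: for $\pi^\A(\{a\})$ proportional to $a^{-2}$ and $\pi^\bb(\{1/n\})$ proportional to $n^{-2}$ one gets ${\hat{c}}^\oplus(\pi^\A,\pi^\bb)=+\infty$ and ${\hat{c}}^\ominus(\pi^\A,\pi^\bb)=-\infty$, so ${\hat{c}}(\pi^\A,\pi^\bb)$ is undefined. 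The repair is to argue as the paper does: do not re-derive the hypotheses of Theorem~\ref{teor:mainonestep} pointwise; instead quote Theorem~\ref{teor:maincont of equil}(i), which already asserts that each game has a solution lying in $\mathbb{P}^\sharp_{v(x)}(A(x))\times\mathbb{P}^\flat_{v(x)}(B(x))$, and combine it with Remark~\ref{rem:sol}, which converts the existence of a solution and of the value into the ``solution if and only if membership'' equivalence (the paper there declares, without separate proof, that all strategies are safe in the setting of Theorem~\ref{teor:maincont of equil} and of this corollary). Your instinct to prove that safety property rather than cite it is reasonable, but the example above shows it cannot be obtained from (A1)--(A4) and compactness of $\bb$ by the route you propose.
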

\begin{proof}
All statements directly follow from  statements (ii) and (iii) of Theorem~\ref{teor:maincont of equil}.
\qed \end{proof}

\section{Notes on One-Step Two-Person Zero-Sum Stochastic Games with Perfect Information}\label{sec:perf}

This section shows that for the sequential one-step game studied in Section~\ref{sec:minimax}, it is sufficient for the both players to use only pure strategies.

Let $\X,$ $\A,$ and $\bb$ be Borel subsets of Polish spaces, $\Phi_{\A}:\X\mapsto S(\A)$ and $\Phi_{\bb}:\Gr(\Phi_{\A})\subset\X\times\A\mapsto S(\bb)$ be set-valued mappings and $f:{\rm Gr}(\Phi_{\bb})\subset \X \times \A\times\bb \mapsto \overline{\mathbb{R}}$ be a function.
  A \textit{one-step two-person zero-sum stochastic game with perfect information} is a tuple $\{\X, \A,\bb, \Phi_{\A},\Phi_{\bb}, f\}$ satisfying the following assumptions:
\begin{itemize}
\item[{(i)}]  $\X$ is the state space;
\item[{(ii)}] $\A$ is the \textit{action space} of the \textit{Player I};
\item[{(iii)}] $\bb$ is the \textit{action space} of the \textit{Player~II};
\item[{(iv)}] $\Gr(\Phi_\A)\in {\mathcal B}(\X\times\A),$
where ${\mathcal B}(\X\times\A)={\mathcal B} (\X)\otimes {\mathcal
B}(\mathbb{A}),$ is the \textit{constrained set} for the \textit{Player I}. It is assumed the existence of a measurable mapping $\phi_\A:\X\mapsto \mathbb{A}$ such
that $\phi_\A(x)\in \Phi_\A(x)$ for each $x\in\X.$ A nonempty Borel subset $\Phi_\A(x)$ of $\mathbb{A}$ represents the
\textit{set of admissible actions} of the \textit{Player~I} in the state $x\in \X;$
\item[{(v)}] $\Gr(\Phi_\bb)\in {\mathcal B}(\X\times\A\times\bb),$
where ${\mathcal B}(\X\times\A\times\bb)={\mathcal B} (\X)\otimes{\mathcal B} (\A)\otimes {\mathcal
B}(\bb),$ is the \textit{constrained set} for the \textit{Player II}. It is assumed the existence of a measurable mapping $\phi_\bb:\X\times\A\mapsto \mathbb{B}$ such
that $\phi_\bb(x,a)\in \Phi_\bb(x,a)$ for each $(x,a)\in \Gr(\Phi_\A).$ A nonempty Borel subset $\Phi_\bb(x,a)$ of $\mathbb{B}$ represents the
\textit{set of admissible actions} of the \textit{Player~II} in the state $x\in \X$ when Player~I choose an action $a\in\Phi_\A(x);$
\item[{(vi)}] the \textit{stage cost} for Player I, $-\infty\le f(x,a,b)\le +\infty,$ for choosing actions $a\in \Phi_\A(x)$ and $b\in \Phi_\bb(x,a)$ in a state $x\in\X,$ is a \textit{Borel} function on $\Gr(\Phi_\bb).$
\end{itemize}

\textit{The decision process proceeds as follows}:

$\bullet$  the current state $x\in\X$
is observed by each player;

$\bullet$ Player I choose an action $a\in \Phi_\A(x);$

$\bullet$ the result $a$ is announced to Player II;

$\bullet$ Player II choose an action $b\in  \Phi_\bb(x,a);$

$\bullet$ the result $b$ is announced to Player I;

$\bullet$ Player I pays Player II the amount $f(x,a,b).$

For a one-step two-person zero-sum stochastic game with perfect information $\{\X, \A,\bb, \Phi_{\A},\Phi_{\bb}, f\},$ let $f^\sharp$ be the worst-loss function (for Player~I) defined in (\ref{eq1starworstloss}), $v^\sharp$
be the minimax function defined in (\ref{eq1starminimax}),
and $\Phi_{\A}^*$ and $\Phi_{\bb}^*$ be the solution multifunctions defined in (\ref{e:defFi*minimax1}) and (\ref{e:defFi*minimax2})
respectively. If for each $(x,a)\in \Gr(\Phi_{\A})$ the function $b\mapsto f(x,a,b)$ is bounded from above, then, according to Theorem~\ref{cor:mainI}, the following equalities hold:
\begin{equation}\label{eq:perf1}
\sup_{\pi^\bb\in\mathbb{P}(\Phi_{\bb}(x,a))} \int_{\Phi_{\bb}(x,a)}f(x,a,b)\pi^\bb(db)=\sup_{b\in \Phi_{\bb}(x,a)} f(x,a,b)=f^\sharp(x,a),
\end{equation}
for each $(x,a)\in \Gr(\Phi_{\A}).$ Moreover, if for each $x\in\X$ the function $a\mapsto f^\sharp(x,a)$ is bounded from below, then, according to Theorem~\ref{cor:mainI}, the following equalities additionally hold:
\begin{equation}\label{eq:perf2}
\inf_{\pi^\A\in \mathbb{P}(\Phi_\A(x))}\int_{\Phi_\A(x)}f^\sharp(x,a)\pi^\A(da)=\inf\limits_{a\in \Phi_{\A}(x)}f^\sharp(x,a)=v^\sharp(x),
\end{equation}
for each $x\in\X.$ Therefore, all theorems and corollary from Section~\ref{sec:minimax} hold for stochastic one-step two-person zero-sum stochastic game with perfect information $\{\X, \A,\bb, \Phi_{\A},\Phi_{\bb}, f\}$ when each player possibly choose mixed strategies. According to equalities (\ref{eq:perf1}) and (\ref{eq:perf2}), the optimas for each player attain on the sets of respective pure strategies.

\section*{Appendix\quad Properties of $\Aa$-Lower Semi-Continuous Multifunctions}

\renewcommand{\appendix}{\setcounter{section}{0}}\renewcommand{\thesection}{A}

\appendix

This appendix describes some properties of $\Aa$-lower semi-continuous multifunctions.    Definition~\ref{defi:uniformAlsc} and the definition of lower semi-continuous multifunctions imply that an $\Aa$-lower semi-continuous multifunction is lower semi-continuous.  The following example demonstrates that a lower semi-continuous multifunction may not be $\Aa$-lower semi-continuous.

\begin{example}\label{exa:Ap}
{\rm Let $\Xx=\Aa=[0,1],$ $\Bb=\mathbb{R},$ $\Phii_\Aa(x)=\{x\}\cup\{\frac{1}{x}\}$ for $x\in (0,1],$ $\Phii_\Aa(0)=\{0\},$ and $\Phii_\Bb(x,a)=\{a\}$ for all $(x,a)\in {\rm Gr}(\Phii_\Aa).$  Since each set $\Phii_\Bb(x,a)$ is a singleton, where $(x,a)\in {\rm Gr}(\Phii_\Aa),$ and the graph of the multifunction $\Phii_\Bb$ is closed,  the multifunction $\Phii_\Bb$ is lower semi-continuous.  Let us consider the sequence $\{x_n\}_{n=1,2,\ldots}=\{\frac{1}{n}\}_{n=1,2,\ldots}$  converging to $x=0.$  Then $b:=0\in \Phii_\Bb(0,0)$ and $a^{(n)}=n\in \Phii_\Aa(x^{(n)}),$ $n=1,2,\ldots\ .$  However, the sequence $\{b_n\}_{n=1,2,\ldots}:=\{n\}_{n=1,2,\ldots}$ does not have a limit point.  Thus, the multifunction $\Phii_\Bb$ is not $\Aa$-lower semi-continuous.
}
\end{example}

Let us provide sufficient conditions for   $\Aa$-lower semi-continuity.

\begin{lemma}\label{lem:unifAlsc}
Let $\Phii_{\Bb}:\Gr(\Phii_{\Aa})\subset\Xx\times\Aa\mapsto S(\Bb)$ be a lower semi-continuous set-valued mapping. Then the following statements hold:
\begin{itemize}
\item[{\rm(a)}] if $\Phii_{\Aa}:\Xx\mapsto S(\Aa)$ is upper semi-continuous and compact-valued at each $x\in\Xx,$ then $\Phii_{\Bb}:\Gr(\Phii_{\Aa})\subset\Xx\times\Aa\mapsto S(\Bb)$ is  $\Aa$-lower semi-continuous;
\item[{\rm(b)}] if $\Phii_{\Bb}(x,a)$ does not depend on $a\in \Phii_\Aa(x)$ for each $x\in\Xx,$ that is, $\Phii_{\Bb}(x,a_*)=\Phii_{\Bb}(x,a^*)$ for each $(x,a_*),(x,a^*)\in \Gr(\Phii_\Aa),$ then $\Phii_{\Bb}:\Gr(\Phii_{\Aa})\subset\Xx\times\Aa\mapsto S(\Bb)$ is  $\Aa$-lower semi-continuous.
\end{itemize}
\end{lemma}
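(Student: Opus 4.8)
The plan is to verify the defining sequential condition of Definition~\ref{defi:uniformAlsc} directly in each case, in both instances reducing matters to the assumed lower semi-continuity of $\Phii_{\Bb}$ through the sequential characterization of lower semi-continuous multifunctions recalled just before Lemma~\ref{lm0}. Throughout I fix a sequence $\{x^{(n)}\}$ with $x^{(n)}\to x$ in $\Xx$, points $a^{(n)}\in\Phii_\Aa(x^{(n)})$, and an element $b\in\Phii_\Bb(x,a)$ for some $a\in\Phii_\Aa(x)$, and I must produce $b^{(n)}\in\Phii_\Bb(x^{(n)},a^{(n)})$ having $b$ as a limit point.

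For statement (a) I would first invoke Lemma~\ref{k-upper semi-comp}: since $\Phii_\Aa$ is upper semi-continuous and compact-valued at each point, it is $\K$-upper semi-compact, so for the compact set $C:=\{x\}\cup\{x^{(n)}\,:\,n\ge 1\}$ the graph $\Gr_C(\Phii_\Aa)$ is compact. Consequently the pairs $(x^{(n)},a^{(n)})$ admit a subsequence converging to some $(x,\bar a)\in\Gr(\Phii_\Aa)$, so that $a^{(n_k)}\to\bar a$ with $\bar a\in\Phii_\Aa(x)$. The sequential form of lower semi-continuity of $\Phii_\Bb$ at $(x,\bar a)$ then allows one to pull back, along $(x^{(n_k)},a^{(n_k)})\to(x,\bar a)$, any element of $\Phii_\Bb(x,\bar a)$ to a sequence lying in the fibers $\Phii_\Bb(x^{(n_k)},a^{(n_k)})$; the delicate point, addressed below, is to carry this out for the prescribed $b$, and the remaining indices are then filled in arbitrarily.

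For statement (b) the hypothesis that $\Phii_\Bb(x,a)$ is independent of $a\in\Phii_\Aa(x)$ means that the fiber $\Phii_\Bb(x^{(n)},a^{(n)})$ does not depend on which admissible $a^{(n)}$ is supplied. Hence the conclusion is insensitive to the given $a^{(n)}$, and I am free to replace it by any convenient admissible sequence: selecting $\tilde a^{(n)}\in\Phii_\Aa(x^{(n)})$ with $\tilde a^{(n)}\to a$ and applying lower semi-continuity of $\Phii_\Bb$ along $(x^{(n)},\tilde a^{(n)})\to(x,a)$ produces $b^{(n)}\in\Phii_\Bb(x^{(n)},\tilde a^{(n)})=\Phii_\Bb(x^{(n)},a^{(n)})$ with $b$ as a limit point. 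Equivalently, independence collapses the problem to lower semi-continuity of the $x$-sectional multifunction $x\mapsto\Phii_\Bb(x,\,\cdot\,)$ alone, the selection of an admissible $\tilde a^{(n)}\to a$ being the mechanism that transfers sectional lower semi-continuity from that of $\Phii_\Bb$ on the graph.

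The main obstacle in both parts is reconciling the witness $a$ for the membership $b\in\Phii_\Bb(x,a)$ with the actual limiting behavior of the supplied sequence $a^{(n)}$, which need not cluster at $a$. Statement (b) dissolves this difficulty entirely, since independence from the $\Aa$-variable renders the target fibers insensitive to the choice of $a^{(n)}$, so that only $x^{(n)}\to x$ matters. Statement (a) is where the work lies: the limit point $\bar a$ of $\{a^{(n)}\}$ produced by $\K$-upper semi-compactness need not coincide with the witness $a$, so one must ensure that the elements delivered by lower semi-continuity of $\Phii_\Bb$ genuinely approximate $b$ while landing in the fibers $\Phii_\Bb(x^{(n)},a^{(n)})$ over the prescribed $a^{(n)}$; this fiber-matching is precisely why upper semi-continuity together with compact-valuedness of $\Phii_\Aa$---equivalently, its $\K$-upper semi-compactness from Lemma~\ref{k-upper semi-comp}---is invoked, forcing every such sequence $a^{(n)}$ to cluster inside the compact fiber $\Phii_\Aa(x)$.
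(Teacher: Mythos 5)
Your plan for part (a) follows the same skeleton as the paper's proof: use Lemma~\ref{k-upper semi-comp} to extract from $\{a^{(n)}\}$ a subsequence $a^{(n_k)}\to\bar a\in\Phii_\Aa(x)$, and then apply the sequential form of lower semi-continuity of $\Phii_\Bb$ along $(x^{(n_k)},a^{(n_k)})\to(x,\bar a)$, filling in the remaining indices arbitrarily. The problem is that your argument stops exactly at the point you yourself call ``where the work lies.'' Lower semi-continuity of $\Phii_\Bb$ at $(x,\bar a)$ pulls back only elements of $\Phii_\Bb(x,\bar a)$ into the fibers $\Phii_\Bb(x^{(n_k)},a^{(n_k)})$, whereas Definition~\ref{defi:uniformAlsc} asks you to approximate a prescribed $b\in\Phii_\Bb(x,a)$ whose witness $a$ may be unrelated to the sequence $\{a^{(n)}\}$, and nothing in your argument gives $b\in\Phii_\Bb(x,\bar a)$. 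Your final paragraph does not close this hole: asserting that the fiber-matching ``is precisely why'' upper semi-continuity and compact-valuedness of $\Phii_\Aa$ are invoked is circular, since invoking them again only reproduces the cluster point $\bar a$ and supplies no mechanism for transporting $b$ from the fiber over $a$ to the fiber over $\bar a$. So part (a) of your proposal is a correctly assembled reduction with its decisive step missing. (For comparison, the paper's proof passes over this point by denoting the cluster point with the same letter $a$ as the witness, i.e., by identifying the two; you correctly noticed that these are a priori different points, but flagging the difficulty is not the same as resolving it.)

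Part (b) rests on a step that can fail. You propose to choose $\tilde a^{(n)}\in\Phii_\Aa(x^{(n)})$ with $\tilde a^{(n)}\to a$ and then apply lower semi-continuity of $\Phii_\Bb$ along $(x^{(n)},\tilde a^{(n)})\to(x,a)$. The existence of such a selection is exactly lower semi-continuity of $\Phii_\Aa$ at $x$, and statement (b) assumes nothing about $\Phii_\Aa$ beyond strictness; if, say, the sets $\Phii_\Aa(x^{(n)})$ all lie at distance at least $1$ from the witness $a$, then no admissible $\tilde a^{(n)}$ converges to $a$ and your construction produces nothing. The paper's proof of (b) does not route through any auxiliary sequence: it uses the independence hypothesis to treat $\Phii_\Bb(x^{(n)},a^{(n)})$ as a set depending on $x^{(n)}$ alone, so that lower semi-continuity of $\Phii_\Bb$ is applied to the given data rather than to a relocated sequence. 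Your closing remark in (b) --- that the selection $\tilde a^{(n)}\to a$ is ``the mechanism that transfers sectional lower semi-continuity from that of $\Phii_\Bb$ on the graph'' --- makes the dependence of your argument on this unjustified selection explicit rather than removing it.
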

\begin{remark}\label{rem:uA-lsc}
{\rm Let $\Phii:\Xx\mapsto S(\Bb),$ where $\Phii(x)$ can be interpreted as the set of actions for Player II, when this set does not depend on the actions of Player I, as this takes place for games with simultaneous moves.  Then we can define the sets
\begin{equation}\label{eqdefphphi}
\Phii_\Bb(x,a):= \Phii(x),\qquad\qquad(x,a)\in{\rm Gr}(\Phii_\Aa).
\end{equation}
The definition of a lower semi-continuous multifunction implies that, if the multifunction $\Phii:\Xx\mapsto S(\Bb)$ is lower semi-continuous, then the multifunction $\Phii_{\Bb}:\Gr(\Phii_{\Aa})\subset\Xx\times\Aa\mapsto S(\Bb)$ is lower semi-continuous too.
Lemma~\ref{lem:unifAlsc} implies that  the lower semi-continuity of $\Phii_{\Bb}:\Gr(\Phii_{\Aa})\subset\Xx\times\Aa\mapsto S(\Bb)$ is equivalent to its   $\Aa$-lower semi-continuity in the following two cases:
(a) for two-person zero-sum games with perfect information, when the decision sets $\{\Phii_{\Aa}(x)\}_{x\in\Xx}$ for the first player are compact and the dependence of $\Phii_{\Aa}(x)$ by the state variable $x$ is upper semi-continuous, and (b) for two-person zero-sum games with simultaneous moves.
}
\end{remark}
\begin{proof}{\it of Lemma~\ref{lem:unifAlsc}}
(a) Let $\{x^{(n)}\}_{n=1,2,\ldots}$ be a sequence with values in $\Xx$ that converges and its limit $x$ belongs to $\Xx.$ Let also $a^{(n)}\in \Phii_\Aa(x^{(n)}),$ for each $n=1,2,\ldots,$ and $b\in \Phii_\Bb(x,a)$ for some $a\in \Phii_\Aa(x).$ Let us prove that
$b$ is a limit point for a sequence $\{b^{(n)}\}_{n=1,2,\ldots}$ with $b^{(n)}\in \Phii_\Bb(x^{(n)},a^{(n)})$ for each $n=1,2,\ldots\ .$ Indeed, Lemma~\ref{k-upper semi-comp}, being applied to $\X:=\Xx,$ $\Y:=\Aa,$ and $\Phi:=\Phii_\Aa,$ implies that the sequence $\{a^{(n)}\}_{n=1,2,\ldots}$ has a limit point $a\in \Phii_\Aa(x).$ Therefore, $b$ is a limit point of a sequence $\{b^{(n)}\}_{n=1,2,\ldots}$ with $b^{(n)}\in \Phii_\Bb(x^{(n)},a^{(n)})$ for each $n=1,2,\ldots,$ since $\Phii_{\Bb}:\Gr(\Phii_{\Aa})\subset\Xx\times\Aa\mapsto S(\Bb)$ is a lower semi-continuous set-valued mapping.

(b) Since $\Phii_{\Bb}:\Gr(\Phii_{\Aa})\subset\Xx\times\Aa\mapsto S(\Bb)$ is a lower semi-continuous set-valued mapping and $\Phii(x,a)$ does not depend on $a\in \Phii_\Aa(x)$ for each $x\in\Xx,$  the following statement holds: if a sequence $\{x^{(n)}\}_{n=1,2,\ldots}$ with values in $\Xx$ converges and its limit $x$ belongs to $\Xx,$ $a^{(n)}\in \Phii_\Aa(x^{(n)})$ for each $n=1,2,\ldots,$ and $b\in \Phii_\Bb(x,a)$ for some $a\in \Phii_\Aa(x),$ then $b$ is a limit point of a sequence $\{b^{(n)}\}_{n=1,2,\ldots}$ with $b^{(n)}\in \Phii_\Bb(x^{(n)},a^{(n)})$ for each $n=1,2,\ldots,$ that is, $\Phii_{\Bb}:\Gr(\Phii_{\Aa})\subset\Xx\times\Aa\mapsto S(\Bb)$ is $\Aa$-lower semi-continuous set-valued mapping.
\qed \end{proof}

The following two statements, which are not used in this paper, provide additional properties of   $\Aa$-lower semi-continuous set-valued mappings for the case, when $\Bb$ is a vector space. 
Let $\Bb$ be a vector space and $\Phii_\Bb,\Psii_\Bb:\Gr(\Phii_\Aa)\subset \Xx\times\Aa\mapsto S(\Bb)$ be set-valued mappings. Let us define for each $(x,a)\in \Gr(\Phii_\Aa)$
\[
\Phii_\Bb(x,a)+\Psii_\Bb(x,a):=\{b_0+b_1\,:\,b_1\in\Phii_\Bb(x,a),\, b_2\in\Psii_\Bb(x,a)
\}.
\]

\begin{lemma}\label{lem:linear}
Let $\Bb$ be a vector space and $\Phii_\Bb,\Psii_\Bb:\Gr(\Phii_\Aa)\subset \Xx\times\Aa\mapsto S(\Bb)$ be   $\Aa$-lower semi-continuous set-valued mappings. Then the set-valued mapping $
\Phii_\Bb+\Psii_\Bb:\Gr(\Phii_\Aa)\subset \Xx\times\Aa\mapsto S(\Bb)$ is   $\Aa$-lower semi-continuous.
\end{lemma}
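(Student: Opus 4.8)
The plan is to verify directly that the sum $\Phii_\Bb+\Psii_\Bb$ satisfies the defining condition of $\Aa$-lower semi-continuity in Definition~\ref{defi:uniformAlsc}. First I would fix a sequence $\{x^{(n)}\}_{n=1,2,\ldots}$ with values in $\Xx$ that converges to some $x\in\Xx$, elements $a^{(n)}\in\Phii_\Aa(x^{(n)})$ for each $n$, and a point $b\in(\Phii_\Bb+\Psii_\Bb)(x,a)$ for some $a\in\Phii_\Aa(x)$. By the definition of the sum, there exist $b_1\in\Phii_\Bb(x,a)$ and $b_2\in\Psii_\Bb(x,a)$ with $b=b_1+b_2$. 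The goal is then to produce a sequence $\{b^{(n)}\}_{n=1,2,\ldots}$ with $b^{(n)}\in(\Phii_\Bb+\Psii_\Bb)(x^{(n)},a^{(n)})$ for each $n$ such that $b$ is a limit point of $\{b^{(n)}\}_{n=1,2,\ldots}$.

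Next I would invoke the $\Aa$-lower semi-continuity of $\Phii_\Bb$: applied to the sequence $\{x^{(n)}\}_{n=1,2,\ldots}$, the elements $a^{(n)}$, and the point $b_1\in\Phii_\Bb(x,a)$, it yields a sequence $\{b_1^{(n)}\}_{n=1,2,\ldots}$ with $b_1^{(n)}\in\Phii_\Bb(x^{(n)},a^{(n)})$ for each $n$ and $b_1$ as a limit point. I would then pass to a subsequence $\{n_k\}_{k=1,2,\ldots}$ along which $b_1^{(n_k)}\to b_1$. The key maneuver is that $\{x^{(n_k)}\}_{k=1,2,\ldots}$ still converges to $x$ and $a^{(n_k)}\in\Phii_\Aa(x^{(n_k)})$, so I can apply the $\Aa$-lower semi-continuity of $\Psii_\Bb$ to this subsequence together with the point $b_2\in\Psii_\Bb(x,a)$, obtaining elements $b_2^{(n_k)}\in\Psii_\Bb(x^{(n_k)},a^{(n_k)})$ with $b_2$ as a limit point; a further subsequence $\{n_{k_j}\}_{j=1,2,\ldots}$ then gives $b_2^{(n_{k_j})}\to b_2$.

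Finally, along the nested subsequence $\{n_{k_j}\}_{j=1,2,\ldots}$ both $b_1^{(n_{k_j})}\to b_1$ and $b_2^{(n_{k_j})}\to b_2$ hold simultaneously, so the continuity of addition on $\Bb$ gives $b_1^{(n_{k_j})}+b_2^{(n_{k_j})}\to b_1+b_2=b$, while $b_1^{(n_{k_j})}+b_2^{(n_{k_j})}\in(\Phii_\Bb+\Psii_\Bb)(x^{(n_{k_j})},a^{(n_{k_j})})$. I would then define the witnessing sequence by setting $b^{(n)}:=b_1^{(n_{k_j})}+b_2^{(n_{k_j})}$ when $n=n_{k_j}$ for some $j$, and by choosing $b^{(n)}$ to be any element of the nonempty set $(\Phii_\Bb+\Psii_\Bb)(x^{(n)},a^{(n)})$ otherwise; then $b$ is a limit point of $\{b^{(n)}\}_{n=1,2,\ldots}$, which is exactly what Definition~\ref{defi:uniformAlsc} requires. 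The main obstacle is the bookkeeping with limit points: since $\Aa$-lower semi-continuity only guarantees $b_1$ and $b_2$ as limit points rather than genuine limits, the two approximating sequences need not converge along a common subsequence, and one therefore cannot simply add the two full sequences term by term. The nested extraction of subsequences---first for $\Phii_\Bb$, and then for $\Psii_\Bb$ restricted to that subsequence---is precisely what forces simultaneous convergence and makes the continuity of addition applicable.
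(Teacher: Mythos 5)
Your proof is correct and follows the same basic route as the paper's: decompose $b=b_1+b_2$ with $b_1\in\Phii_\Bb(x,a)$ and $b_2\in\Psii_\Bb(x,a)$, invoke the $\Aa$-lower semi-continuity of each summand mapping, and add the resulting approximating sequences. The difference is that the paper's own proof stops short of the care you take: it applies the definition to both $\Phii_\Bb$ and $\Psii_\Bb$ along the \emph{full} sequence $\{(x^{(n)},a^{(n)})\}_{n=1,2,\ldots}$, obtains sequences $\{b_1^{(n)}\}_{n=1,2,\ldots}$ and $\{b_2^{(n)}\}_{n=1,2,\ldots}$ having $b_1$ and $b_2$ as limit points, and then simply asserts that $b_1+b_2$ is a limit point of $\{b_1^{(n)}+b_2^{(n)}\}_{n=1,2,\ldots}$. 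As you correctly observe, that inference is not valid as stated, since the two limit points may be attained along disjoint subsequences. Your nested extraction---first pass to a subsequence along which $b_1^{(n_k)}\to b_1$, then apply the $\Aa$-lower semi-continuity of $\Psii_\Bb$ to the convergent sequence $\{x^{(n_k)}\}_{k=1,2,\ldots}$ (legitimate, because the definition quantifies over all convergent sequences), and extract once more---is exactly what is needed to force simultaneous convergence, after which continuity of addition finishes the argument; filling in arbitrary elements of the nonempty sets $(\Phii_\Bb+\Psii_\Bb)(x^{(n)},a^{(n)})$ off the subsequence is harmless because a limit point along a subsequence is a limit point of the whole sequence. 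In short, your proof is a repaired version of the paper's: same decomposition, same key tool, with the missing subsequence bookkeeping supplied. (One shared caveat: both arguments use that addition is continuous on $\Bb$, which requires $\Bb$ to carry a metric vector space structure with continuous addition rather than being a bare vector space; the lemma's statement leaves this hypothesis implicit.)
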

\begin{proof} {\it of Lemma~\ref{lem:linear}}
Let $\{x^{(n)}\}_{n=1,2,\ldots}$ be a sequence with values in $\Xx$ that converges and its limit $x$ belongs to $\Xx.$ Assume that $a^{(n)}\in \Phii_\Aa(x^{(n)})$ for each $n=1,2,\ldots,$ and $b\in \Phii_\Bb(x,a)$ for some $a\in \Phii_\Aa(x).$ Let us prove that  $b$ is a limit point of a sequence $\{b^{(n)}\}_{n=1,2,\ldots}$ with $b^{(n)}\in \Phii_\Bb(x^{(n)},a^{(n)}),$ $n=1,2,\ldots.$ Indeed, since $\Phii_\Bb(x,a)=\Phii_\Bb(x,a)+\Psii_\Bb(x,a),$
 there exist $b_1\in \Phii_\Bb(x,a)$ and $b_2\in \Psii_\Bb(x,a)$  such that $b=b_0+b_1.$ The $\Aa$-lower semi-continuity of $\Phii_\Bb:\Gr(\Phii_\Aa)\subset \Xx\times\Aa\mapsto S(\Bb)$  and $\Psii_\Bb:\Gr(\Phii_\Aa)\subset \Xx\times\Aa\mapsto S(\Bb)$ imply that $b_i,$ $i=0,1,$ is a limit point of a sequence $\{b_i^{(n)}\}_{n=1,2,\ldots}$ with $b_1^{(n)}\in \Phii_\Bb(x^{(n)},a^{(n)})$ and $b_2^{(n)}\in \Psii_\Bb(x^{(n)},a^{(n)})$ Therefore, $b=b_0+b_1$ is a limit point of a sequence $\{b^{(n)}\}_{n=1,2,\ldots}$ with $b^{(n)}:=b_0^{(n)}+b_1^{(n)}\in \Phii_\Bb(x^{(n)},a^{(n)})+\Psii_\Bb(x^{(n)},a^{(n)}),$ 
$n=1,2,\ldots\ .$
Thus, the set-valued mapping $\Phii_\Bb^0+\Phii_\Bb^1:\Gr(\Phii_\Aa)\subset \Xx\times\Aa\mapsto S(\Bb)$ is   $\Aa$-lower semi-continuous.
\qed
\end{proof}

\begin{corollary}\label{cor:linear}
Let $\Bb$ be a vector space, $\Phii:\Xx\mapsto S(\Bb)$ be a lower semi-continuous set-valued mapping, $\Phii_{\Aa}:\Xx\mapsto \K(\Aa)$ be an upper semi-continuous set-valued mapping, and $\Psii_\Bb:\Gr(\Phii_\Aa)\subset \Xx\times\Aa\mapsto S(\Bb)$
be a lower semi-continuous set-valued mapping. Let us consider the set-valued mapping  $\Phii_\Bb:\Gr(\Phii_\Aa)\subset \Xx\times\Aa\mapsto S(\Bb)$ defined in \eqref{eqdefphphi}. Then the set-valued mapping $\Phii_\Bb+\Psii_\Bb:\Gr(\Phii_\Aa)\subset \Xx\times\Aa\mapsto S(\Bb)$
is   $\Aa$-lower semi-continuous.
\end{corollary}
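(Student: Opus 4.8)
The plan is to establish $\Aa$-lower semi-continuity of each of the two summands $\Phii_\Bb$ and $\Psii_\Bb$ separately, and then combine them using Lemma~\ref{lem:linear}, which asserts that the sum of two $\Aa$-lower semi-continuous set-valued mappings is again $\Aa$-lower semi-continuous. Thus the corollary reduces to verifying the hypotheses of Lemma~\ref{lem:linear}, and the whole argument is an application of the two parts of Lemma~\ref{lem:unifAlsc} to the appropriate summand.

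First I would handle $\Psii_\Bb.$ By assumption $\Psii_\Bb:\Gr(\Phii_\Aa)\subset\Xx\times\Aa\mapsto S(\Bb)$ is lower semi-continuous, and $\Phii_\Aa:\Xx\mapsto\K(\Aa)$ is upper semi-continuous and, being $\K(\Aa)$-valued, compact-valued at each $x\in\Xx.$ These are precisely the hypotheses of Lemma~\ref{lem:unifAlsc}(a), so that lemma yields that $\Psii_\Bb$ is $\Aa$-lower semi-continuous. The compact-valued upper semi-continuous structure of $\Phii_\Aa$ is exactly what supplies the limit point $a\in\Phii_\Aa(x)$ of the sequence $\{a^{(n)}\}_{n=1,2,\ldots}$ needed in Definition~\ref{defi:uniformAlsc}.

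Next I would treat $\Phii_\Bb,$ defined in \eqref{eqdefphphi} by $\Phii_\Bb(x,a):=\Phii(x)$ for $(x,a)\in\Gr(\Phii_\Aa).$ Since $\Phii:\Xx\mapsto S(\Bb)$ is lower semi-continuous, the set-valued mapping $\Phii_\Bb$ is lower semi-continuous as well, as noted in Remark~\ref{rem:uA-lsc}. Moreover, by construction $\Phii_\Bb(x,a)$ does not depend on $a\in\Phii_\Aa(x),$ so Lemma~\ref{lem:unifAlsc}(b) applies and shows that $\Phii_\Bb$ is $\Aa$-lower semi-continuous. Here the $a$-independence of $\Phii_\Bb$ is the key feature, replacing the compactness argument used for $\Psii_\Bb.$

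Finally, having shown that both $\Phii_\Bb$ and $\Psii_\Bb$ are $\Aa$-lower semi-continuous, I would invoke Lemma~\ref{lem:linear} to conclude that $\Phii_\Bb+\Psii_\Bb:\Gr(\Phii_\Aa)\subset\Xx\times\Aa\mapsto S(\Bb)$ is $\Aa$-lower semi-continuous, which is the desired statement. I do not expect any genuine obstacle in this argument; the only point requiring care is to match the two branches of Lemma~\ref{lem:unifAlsc} to the correct summand—part~(a) to $\Psii_\Bb,$ using the compact-valued upper semi-continuity of $\Phii_\Aa,$ and part~(b) to the constant summand $\Phii_\Bb,$ using its independence of the variable $a.$
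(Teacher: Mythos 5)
Your proposal is correct and follows essentially the same route as the paper: apply Lemma~\ref{lem:unifAlsc} to establish $\Aa$-lower semi-continuity of each summand, then conclude via Lemma~\ref{lem:linear}. The paper's proof is terser (it does not say which branch of Lemma~\ref{lem:unifAlsc} handles which summand), whereas you correctly make explicit that part~(a) applies to $\Psii_\Bb$ via the compact-valued upper semi-continuity of $\Phii_\Aa$ and part~(b) applies to the $a$-independent mapping $\Phii_\Bb.$
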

\begin{proof}
According to Lemma~\ref{lem:unifAlsc}, the set-valued mappings $\Phii_\Bb, \Psii_\Bb:\Gr(\Phii_\Aa)\subset \Xx\times\Aa\mapsto S(\Bb)$ are    $\Aa$-lower semi-continuous. 
Therefore, Lemma~\ref{lem:linear} implies that their sum is    $\Aa$-lower semi-continuous.
\qed
\end{proof}

\begin{acknowledgements}
The authors thank William D. Sudderth for his valuable comments on von Neumann's and Sion's minimax theorems. The authors thank referees for
their insightful suggestions. 
\end{acknowledgements}

\end{document}